\newcommand{\R}{\mathbb{R}}
\newcommand{\N}{\mathbb{N}}
\newcommand{\dd}{\mathrm{d}}
\newcommand{\kC}{\mathcal{C}}
\newcommand{\kI}{\mathcal{I}}
\newcommand{\kJ}{\mathcal{J}}
\newcommand{\kT}{\mathcal{T}}
\newcommand{\kF}{\mathcal{F}}
\newcommand{\T}[1]{#1^T}
\newcommand{\abs}[1]{\left|{#1}\right|}
\newcommand{\uvoz}[1]{``#1"}
\newtheorem{theorem}{Theorem}
\newtheorem{lemma}{Lemma}
\newtheorem{remark}{Remark}
\newtheorem{definition}[lemma]{Definition}
\newtheorem{example}{Example}
\begin{document}

\title{Discontinuous Galerkin method for macroscopic traffic flow models on networks\thanks{The work of L. Vacek is supported by the Charles University, project GA UK No. 1114119. The work of V. Ku\v{c}era is supported by the Czech Science Foundation, project No. 20-01074S.}}



\author{%
{\sc
Luk\'a\v s Vacek\thanks{Corresponding author. Email: lvacek@karlin.mff.cuni.cz}} \\[2pt]
Charles University, Faculty of Mathematics and Physics\\
Sokolovsk\'{a} 83, Praha 8, 186\,75, Czech Republic\\[6pt]
{\sc and}\\[6pt]
{\sc V\'aclav Ku\v cera}\thanks{Email: kucera@karlin.mff.cuni.cz}\\[2pt]
Charles University, Faculty of Mathematics and Physics\\
Sokolovsk\'{a} 83, Praha 8, 186\,75, Czech Republic
}



\maketitle

\begin{abstract}
In this paper, we describe a numerical technique for the solution of macroscopic traffic flow models on networks of roads. On individual roads, we consider the standard Lighthill-Whitham-Richards model which is discretized using the discontinuous Galerkin method along with suitable limiters. In order to solve traffic flows on networks, we construct suitable numerical fluxes at junctions based on preferences of the drivers. We prove basic properties of the constructed numerical flux and the resulting scheme and present numerical experiments,  including a junction with complicated traffic light patterns with multiple phases. Differences with the approach to numerical fluxes at junctions from \v Cani\'c et al., 2015, are discussed and demonstrated numerically on a simple network.
\end{abstract}

\section*{Introduction}
This paper deals with the numerical solution of traffic flows on networks of roads. The mathematical description of the flow of vehicles (cars) on roads can basically be divided into three approaches based on the level of description -- \emph{microscopic} (where we track every individual vehicle), \emph{mesoscopic} (analogous to the kinetic Boltzmannian approach for gas dynamics) and \emph{macroscopic}, cf. \cite{vanWageningen-Kessels}. We will deal with the latter, macroscopic approach, where traffic on a road is viewed as a single moving continuum, usually described by its point-wise density and velocity. The resulting mathematical description can then be viewed as analogous to the equations of gas dynamics. Since the basic property of traffic flow is the conservation of the total number of vehicles, first order hyperbolic equations, or conservation laws, naturally arise in this context, cf. \cite{Networks}.

We will be concerned with the classical Lighthill-Whitham-Richards (LWR) model, which is a scalar nonlinear first order hyperbolic equation for traffic density, cf. \cite{Lecture_notes_Jungel}, \cite{Mathematical_Framework} and \cite{Networks} for an overview. The LWR model is supplemented by a so-called \emph{fundamental diagram}, which relates traffic density and traffic flow in homogeneous traffic, cf. \cite{Greenshields35}. Thus the LWR model is in fact a whole class of models depending on the choice of the fundamental diagram. 

In this paper, we consider LWR models on networks of roads, cf. \cite{Networks}, \cite{RKDG}. On each individual road, traffic is described simply by the equation arising from the LWR model. At junctions, it is necessary to specify how traffic will be divided between incoming and outgoing roads. This is done according to the \emph{traffic distribution matrix} at each junction, which is based on the drivers' preferences. It is then necessary to express the traffic flows from individual incoming to individual outgoing roads, cf. \cite{Networks} for details.

Since we deal with first order hyperbolic problems, the natural choice of numerical method is the \emph{discontinuous Galerkin} (DG) method, which has become a robust, well understood and popular numerical method for such problems in the past decades  \cite{Cockburn_Karniadakis_Shu}, \cite{DG}. The DG method can be viewed as a combination of the finite element and finite volume methods, which is inherently of arbitrary order of accuracy. This method uses discontinuous, piecewise polynomial approximations on a partition, with the assumption of global continuity being replaced by a weaker form, using the numerical diffusion of a numerical flux function at interfaces between elements of the partition. Thus this performs well on problems with discontinuous solutions or solutions with steep internal or boundary layers, such as those considered in this paper. 

While the DG method in itself is rather well understood with a solid theoretical and practical background, cf. \cite{DG}, \cite{Stabilization}, the application of the method on networks is much less standard, \cite{RKDG}. The main problem lies in the construction of numerical fluxes (or even exact Riemann solvers) at nodes (junctions) of the network. This construction must somehow reflect the preferences of the drivers when deciding which way to turn at the junction. This has been done in \cite{Networks}, based on a traffic distribution matrix and the assumption that drivers maximize the total traffic flow through the junction. The disadvantage of this approach is that the construction of the fluxes requires the solution of a Linear Programming problem, which is rather complicated in general, although it can be solved analytically in simple cases \cite{RKDG}. In this paper, we present a simpler alternative construction of the numerical fluxes at junctions, which has the advantage that it is given by an explicit formula for any type of junction. When comparing the two approaches, that of \cite{RKDG} corresponds to single-lane roads with a strict enforcement of a priori traffic distribution, while the presented approach corresponds to having dedicated turning-lanes and/or flexibility of the drivers' preferences in extreme situation such as congestions. Moreover, the presented construction of the traffic flux at junctions allows the simulation of arbitrary traffic light combinations, while that of \cite{RKDG} only allows full green or full red lights on incoming roads. We prove basic properties of the proposed numerical fluxes and DG scheme, discuss the differences between our approach and \cite{RKDG} and present numerical experiments.

The paper is organized as follows. Section \ref{Traffic_Flow} gives a necessary background on macroscopic traffic flow models and traffic flows on networks. In Section \ref{DG_intro}, we present the basic DG scheme on a single domain (road), discuss the numerical flux, limiters and implementation. In Section \ref{Sec_DG_method_on_networks}, we define the DG method on networks, construct the numerical fluxes at junctions, prove basic properties of the resulting DG scheme and discuss the interpretation of the presented construction. Finally, Section \ref{sec_Numerical_results} contains numerical results, including a comparison of the presented approach and that of \cite{RKDG} on a simple network, and the simulation of traffic flow through a junction with complicated traffic light patterns with multiple phases.

\section{Macroscopic traffic flow models}\label{Traffic_Flow}
\subsection{Fundamental quantities and models}
We begin with the mathematical description of macroscopic vehicular traffic, cf. \cite{Lecture_notes_Jungel}, \cite{Mathematical_Framework} and \cite{vanWageningen-Kessels} for details. First, we consider a single road described mathematically as a one-dimensional interval. In the basic macroscopic models, traffic flow is described by three basic fundamental quantities -- \emph{traffic flow $Q$, traffic density $\rho$} and \emph{mean traffic flow velocity $V$}.

The \emph{traffic flow} $Q(x,t)$ determines the number of vehicles passing through a point $x$ on the road within an infinitesimal interval containing the time instant $t$. Traffic flow is measured in vehicles per second and can be formally defined as
\begin{equation}
Q(x,t)=\lim_{\substack{|I_t|\to 0\\ t\in I_t}}\dfrac{N_t(x,I_t)}{|I_t|},
\end{equation}
where $N_t(x,I_t)$ is the number of vehicles passing through the point $x$ within the time interval $I_t$ containing $t$. Traffic flow can be measured from real traffic data.

\emph{Traffic density} $\rho(x,t)$ determines the number of vehicles inside an infinitesimal spatial interval containing $x$, at time $t$. Its unit is cars per meter and it can formally defined as 
\begin{equation}
\rho(x,t)=\lim_{\substack{|I_x|\to 0\\ x\in I_x}}\dfrac{N_x(I_x,t)}{|I_x|},
\end{equation}
where $N_x(I_x,t)$ is the number of vehicles in the interval $I_x$ at the time $t$. Similarly as $Q$, traffic density can be measured from real traffic data.

Finally, the \emph{mean traffic flow velocity} $V(x,t)$ is defined simply as
\begin{equation}\label{def_velocity}
V(x,t)=\dfrac{Q(x,t)}{\rho(x,t)},
\end{equation} 
its unit being meters per second. We note that in general this quantity is not the velocity of a single car. Instead, $V$ can be viewed as the group or average velocity in the neighborhood $x$, which can differ from the velocity of individual cars.

The basic governing equation of traffic flow is derived using the assumption that the number of cars in a segment $[x_1,x_2]$ of the road cars changes only due to the flux through the endpoints, i.e. 
\begin{equation}
\dfrac{\dd}{\dd t}\int_{x_1}^{x_2}\rho(x,t)\,\dd x = Q(x_1,t)-Q(x_2,t).
\end{equation}
Writing the right-hand side as an integral, expressing $Q$ using (\ref{def_velocity}) and eliminating the integral gives the conservation law for $\rho$ in the form
\begin{equation}\label{Conservation_law}
\dfrac{\partial}{\partial t}\rho(x,t)+\dfrac{\partial}{\partial x}\big(\rho(x,t)V(x,t)\big)=0.
\end{equation}
Equation (\ref{Conservation_law}) must be supplemented by an initial condition
and appropriate boundary conditions which we will treat in detail in the case of networks of roads.

\subsection{Lighthill--Whitham--Richards model}

\begin{figure}[b!]\centering
\subfloat[Velocity--density diagrams.]{\includegraphics[height=1.7in]{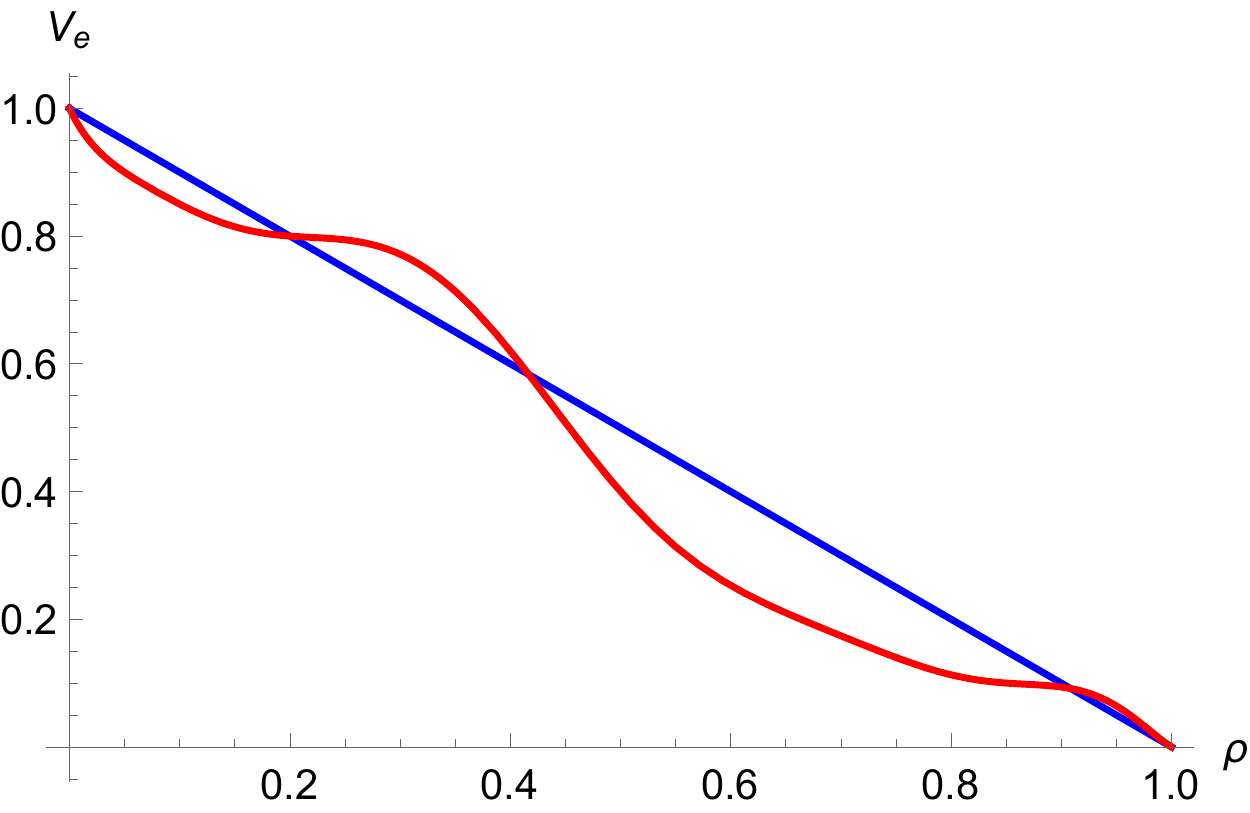}}
\hspace{5pt}
\subfloat[Flow--density diagrams.]{\includegraphics[height=1.7in]{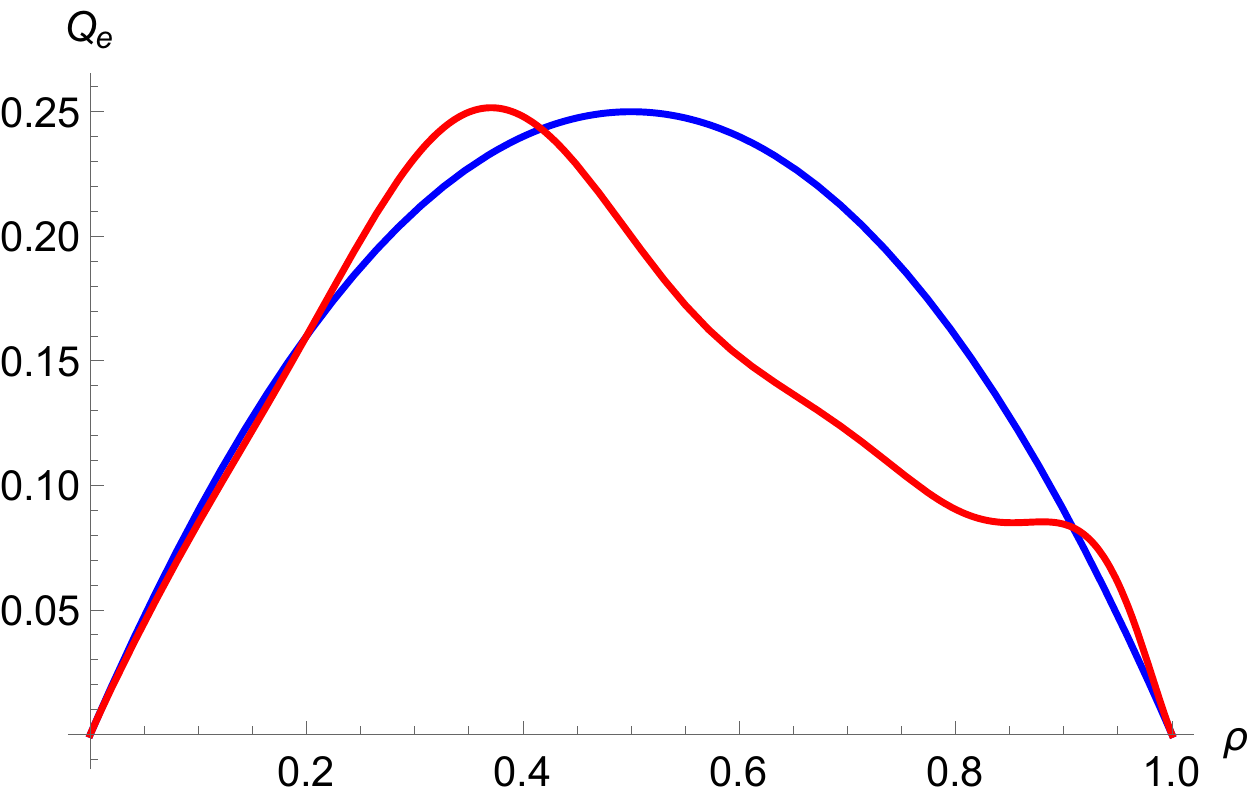}}
\caption{Examples of fundamental diagrams.}
\label{obr_examples_fundamental}
\end{figure}

\begin{figure}[t!]\centering
\subfloat[Velocity--density diagram.]{\includegraphics[height=1.59in]{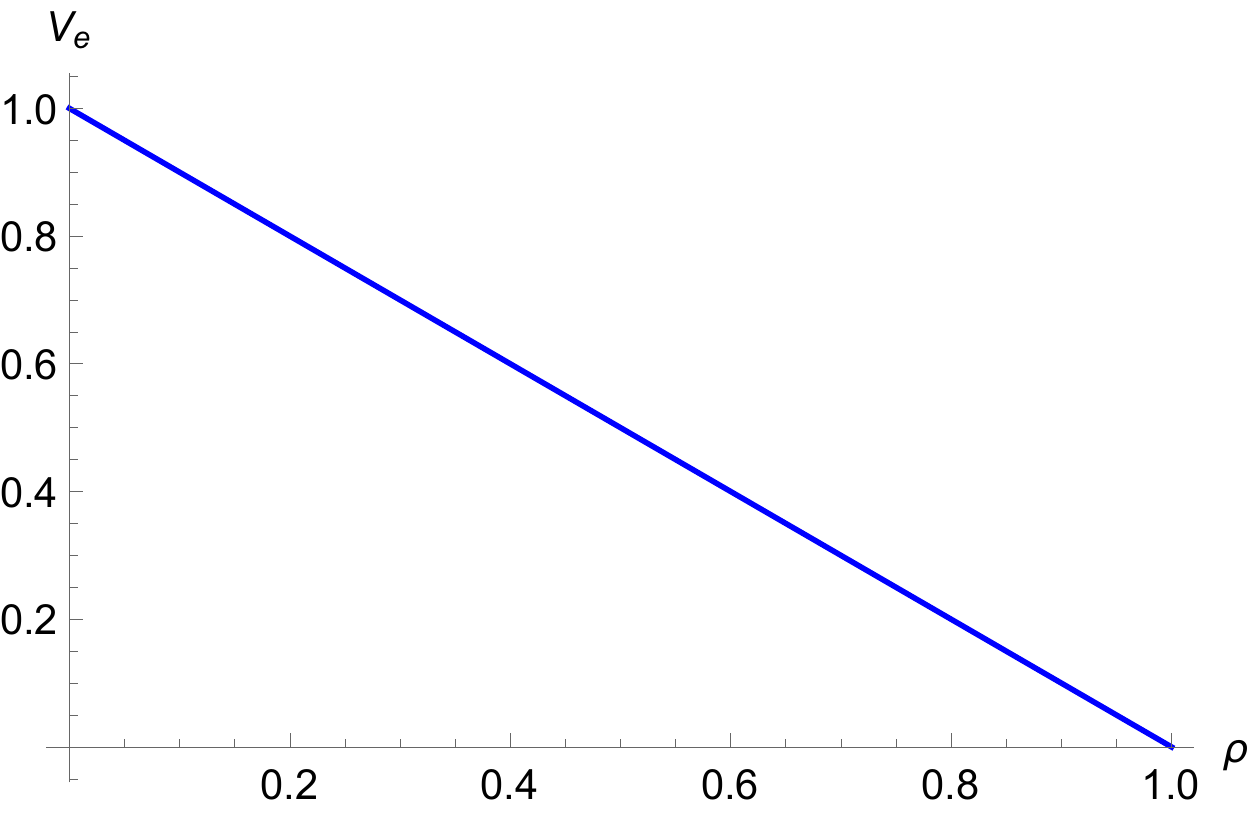}}
\hspace{5pt}
\subfloat[Flow--density diagram.]{\includegraphics[height=1.59in]{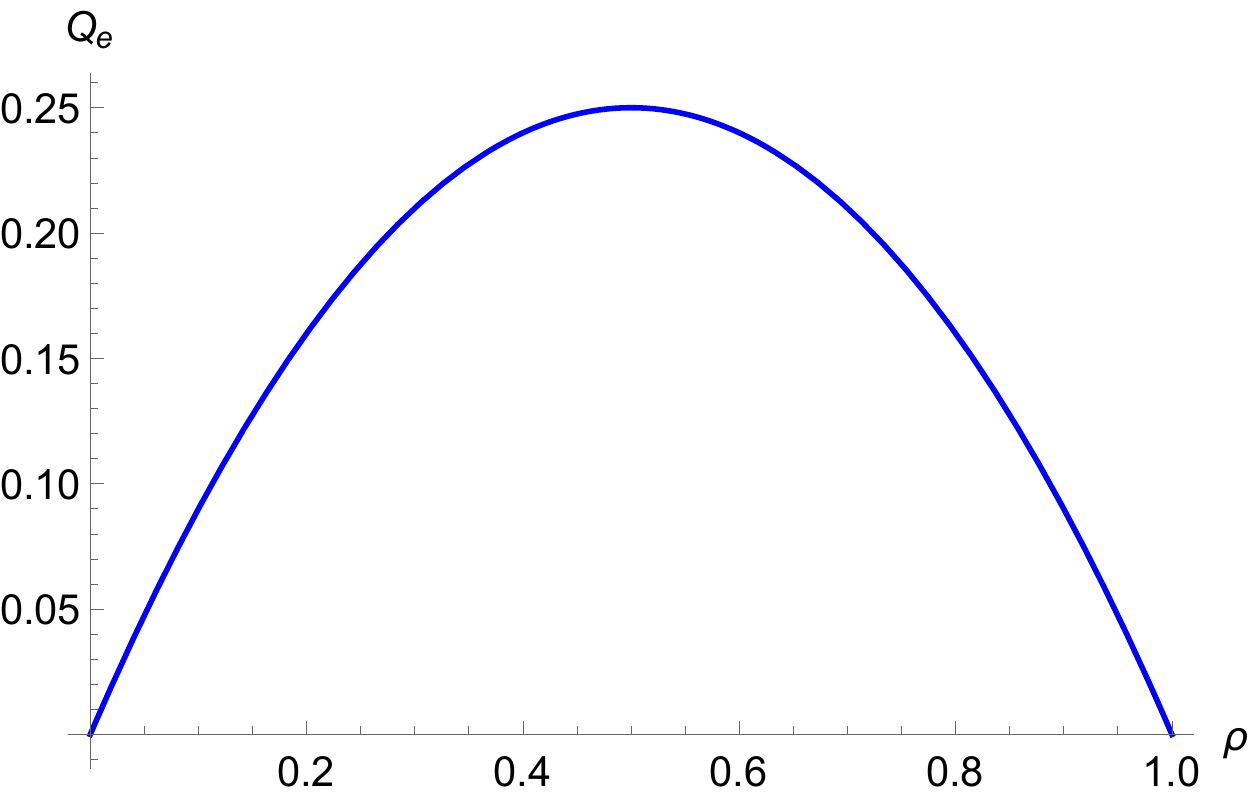}}
\caption{Fundamental diagrams of the Greenshields model.}
\label{obr12}
\end{figure}

Equation (\ref{Conservation_law}) is underdetermined, as there is a single equation for two unknowns. Thus we need to supply another equation or relation between the variables. Greenshields described a relation between traffic density and traffic flow in the paper \cite{Greenshields35}. He realized that traffic flow is a function which depends only on one variable in homogeneous traffic (traffic with no changes in time and space). This one variable is traffic density. This implies that even mean traffic flow velocity depends only on traffic density. Let us denote the equilibrium quantity of homogeneous traffic as $Q_e$, derived from $Q$, and the equilibrium quantity $V_e$ derived from $V$. Following (\ref{def_velocity}),  these equilibrium quantities corresponding to homogeneous traffic satisfy:
\begin{equation}\label{equilib1}
Q_e(\rho)=\rho V_e(\rho).
\end{equation}
In general it is assumed that $V_e$ is a nonincreasing function of $\rho$. Thus, maximal equilibrium traffic flow is attained at a certain density value. The relationship between the $\rho$ and $V_e$ is described by the \emph{fundamental diagram}. Typical fundamental diagrams are shown in Figure \ref{obr_examples_fundamental} -- the blue line in both figures represent the Greenshields model described below.

The Lighthill--Whitham--Richards model (abbreviated LWR) is an approach where we use the equilibrium velocity $V_e$ in equation (\ref{Conservation_law}) resulting in the equation
\begin{equation}\label{LWR_problem}
\begin{split}
\rho_t+\left(Q_e(\rho)\right)_x=0,&\qquad x\in\R,\ t>0,\\
\rho(x,0)=\rho_0(x),&\qquad x\in\R,
\end{split}
\end{equation}
where $Q_e(\rho)$ is the equilibrium traffic flow derined by (\ref{equilib1}).
Equation (\ref{LWR_problem}) belongs to the class of \emph{nonlinear first order hyperbolic equations}. 

There are many different proposals for the equilibrium velocity $V_e$ derived from real traffic data, cf. \cite{Mathematical_Framework}. Here we present only two basic models.

\paragraph{Greenshields model}
This model uses a linear relationship between traffic density and equilibrium traffic velocity:
\[
V_e(\rho)=v_{\max}\left( 1-\dfrac{\rho}{\rho_{\max}}\right),
\]
where $v_{\max}$ is the maximal velocity and $\rho_{\max}$ is the maximal density. We can see the fundamental diagram in Figure \ref{obr12}, where $v_{\max}=\rho_{\max}=1$.

\paragraph{Greenberg model}
This model uses the equilibrium velocity given by
\[
V_e(\rho)=v_{\max}\ln\left(\dfrac{\rho_{\max}}{\rho}\right),
\]
Thus, traffic can overcome the maximal velocity $v_{\max}$. We can see the fundamental diagram in Figure \ref{obr13}, where $v_{\max}=\rho_{\max}=1$.
\begin{figure}[t!]\centering
\subfloat[Velocity--density diagram.]{\includegraphics[height=1.59in]{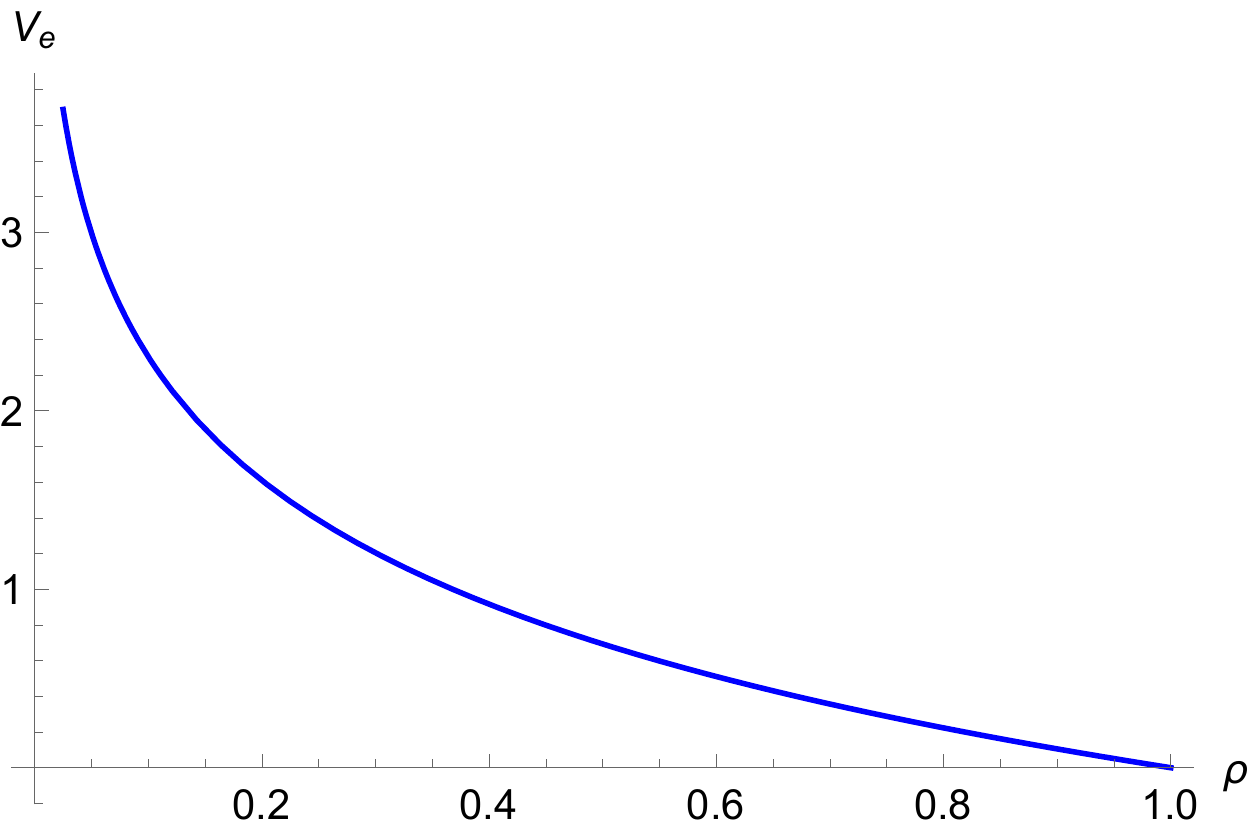}}
\hspace{5pt}
\subfloat[Flow--density diagram.]{\includegraphics[height=1.59in]{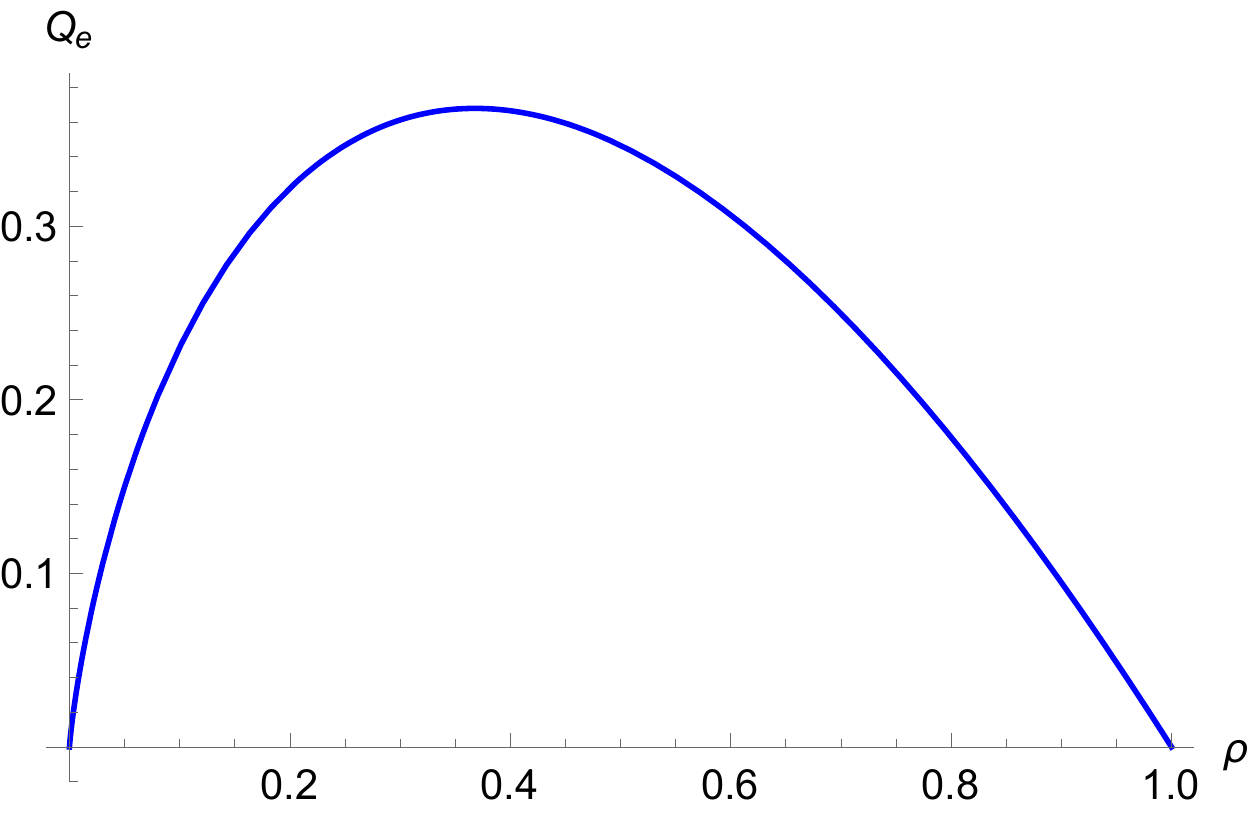}}
\caption{Fundamental diagrams of the Greenberg model.}
\label{obr13}
\end{figure}

\subsection{Traffic flows on networks}\label{Network}
In this section, we introduce the basic concepts and notation describing traffic flows on networks. We refer the reader to \cite{Networks} for a more complete treatment of the topic. 

We consider a network represented by a directed graph. The graph is a finite collection of directed edges, connected together at vertices. Each vertex has a finite set of \emph{incoming edges} and \emph{outgoing edges}.
\begin{definition}[Network]\label{Def_network}	
We define a \emph{network} as a couple $(\kI,\kJ)$, where $\kI=\lbrace I_n\rbrace_{n=1}^N$ is a finite set of edges and $\kJ=\lbrace J_m\rbrace_{m=1}^M$ is a finite set of vertices. Each edge $I_n$ is represented by an interval $[a_n,b_n]\subseteq[-\infty,\infty],\ n=1,\ldots,N$. Each vertex $J_m$ is a union of two non--empty subsets $\text{Inc}(J_m)$ and $\text{Out}(J_m)$ of $\lbrace 1,\ldots,N\rbrace$ representing \emph{incoming} and \emph{outgoing edges}, respectively. We assume the following:
\begin{itemize}
\item[(i)] For all $J_i,J_j\in\kJ,\ i\neq j:\text{Inc}(J_i)\cap\text{Inc}(J_j)=\emptyset$ and $\text{Out}(J_i)\cap\text{Out}(J_j)=\emptyset$.
\item[(ii)] If $i\notin\cup_{J\in\kJ}\text{Inc}(J)$, $i\in\lbrace 1,\ldots,N\rbrace$, then $b_i=\infty$ and if $i\notin\cup_{J\in\kJ}\text{Out}(J)$, $i\in\lbrace 1,\ldots,N\rbrace$, then $a_i=-\infty$. Moreover, for all $i\in\lbrace 1,\ldots,N\rbrace: i\in\cup_{J\in\kJ}\text{Inc}(J)$ or $i\in\cup_{J\in\kJ}\text{Out}(J)$.
\end{itemize}
\end{definition}

Condition (i) states that each edge can be incoming for at most one vertex and outgoing for at most one vertex. Condition (ii) states that edges that are connected to only one vertex extend to $\pm\infty$. Of course in practice artificial inflow/outflow boundaries are introduced in the numerical solution. We can see an example in Figure \ref{obr15}.

\begin{figure}[b!]\centering
\includegraphics[height=1.8in]{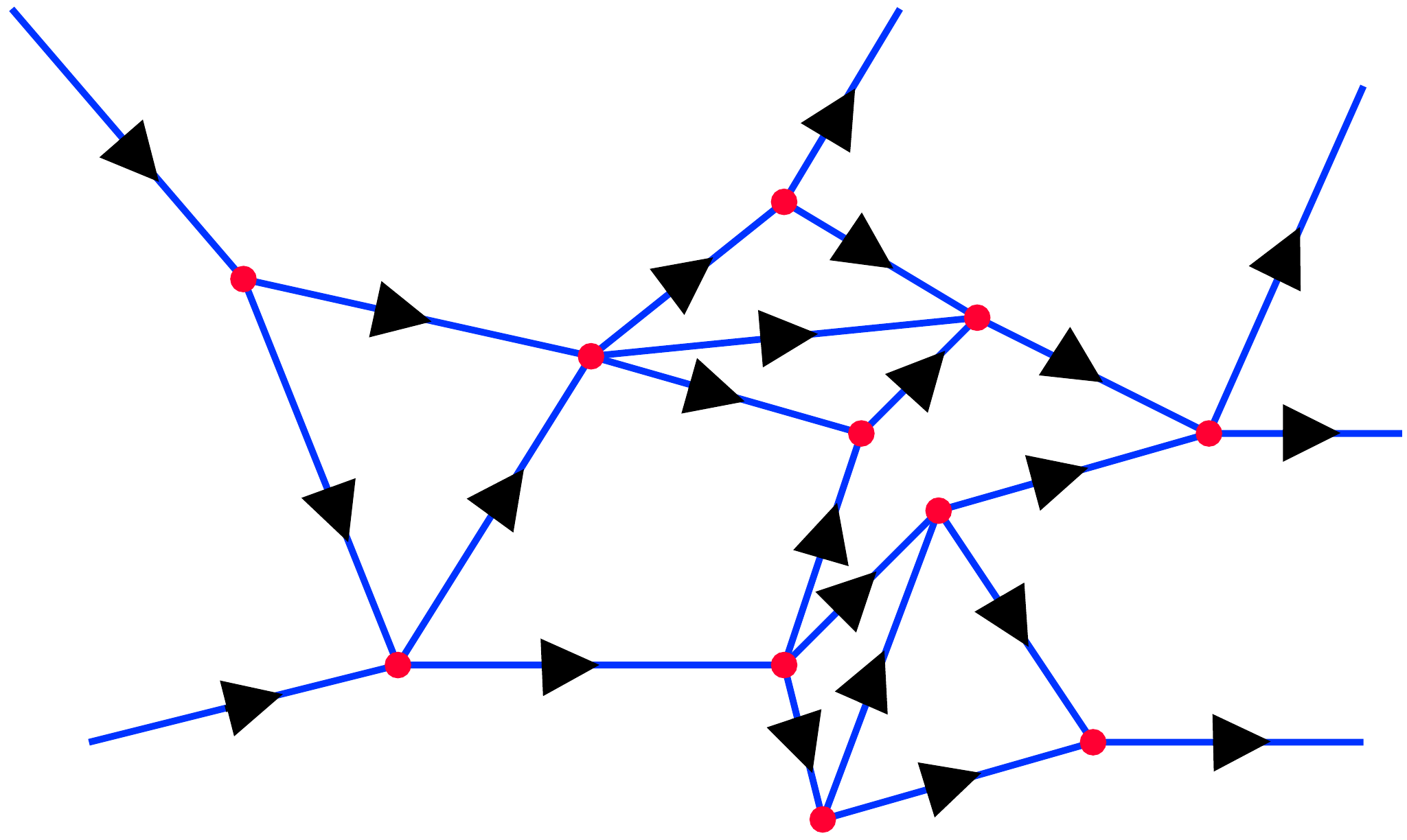}
\caption{Example of a network.}
\label{obr15}
\end{figure}

As we are dealing with traffic flows described by LWR models, we assume that the traffic on edge number $i\in\lbrace 1,\ldots,N\rbrace$ is described by
\begin{equation}\label{LWR_network}
\begin{split}
(\rho_i)_t+\left(Q_e(\rho_i)\right)_x=0,&\qquad x\in(a_i,b_i),\ t>0,\\
\rho_i(x,0)=\rho_{0,i}(x),&\qquad x\in(a_i,b_i),
\end{split}
\end{equation}
where $\rho_i:(a_i,b_i)\times[0,\infty)\to\R$ is the traffic density on the $i$-th edge (road).

What remains is to describe the behavior of traffic at junctions. For this purpose it is sufficient to first consider a single vertex (junction) and its incoming and outgoing roads for simplicity. The resulting considerations can then be applied to each vertex of the general network separately. 

We consider a network $(\kI,\kJ)$ and fix a vertex $J\in\kJ$ for which we assume that $\text{Inc}(J)=\lbrace 1,\ldots,n\rbrace$ and $\text{Out}(J)=\lbrace n+1,\ldots,n+m\rbrace$. We define the spatial limits of traffic densities on individual roads at the common vertex $J$ as
\[
\rho_i^{(L)}(b,t):=\lim_{x\to b_-}\rho_i(x,t)\quad\text{and}\quad \rho_j^{(R)}(a,t):=\lim_{x\to a_+}\rho_j(x,t)
\]
for all $i=1,\ldots,n$ and $j=n+1,\ldots,n+m$. Now we are ready to present the definitions of solution at junctions.
\begin{definition}[Traffic solution at a junction]\label{def_Traffic_solution_at_junction}
Let $J$ be a junction with incoming roads $I_1,\ldots,I_n$ and outgoing road $I_{n+1},\ldots,I_{n+m}$. Then we define a weak solution at $J$ as a collection of functions $\rho_l:I_l\times[0,\infty)\rightarrow\R$, $l=1,\ldots,n+m$ such that
\[
\sum_{l=1}^{n+m}\left(\int_{a_l}^{b_l}\int_0^\infty\left( \rho_l\dfrac{\partial\varphi_l}{\partial t}+Q_e(\rho_l)\dfrac{\partial\varphi_l}{\partial x}\right)\dd t\dd x\right)=0
\]
holds for every $\varphi_l\in\kC_0^1([a_l,b_l]\times [0,\infty))$, $l=1,\ldots,n+m$, that are also smooth across the junction, i.e.
\[
\varphi_i^{(L)}(b_i,\cdot)=\varphi_j^{(R)}(a_j,\cdot),\qquad\left(\dfrac{\partial\varphi_i}{\partial x}\right)^{(L)}(b_i,\cdot)=\left(\dfrac{\partial\varphi_j}{\partial x}\right)^{(R)}(a_j,\cdot),
\]
for all $i\in\lbrace 1,\ldots,n\rbrace$ and $j\in\lbrace n+1,\ldots,n+m\rbrace$.
\end{definition}

The basic property of the weak solution from Definition \ref{def_Traffic_solution_at_junction} is that it satisfies the Rankine-Hugoniot condition which is essentially the conservation of vehicles at the junction.
\begin{lemma}
Let $\rho=\T{(\rho_1,\ldots,\rho_{n+m})}$ be a weak solution at the junction $J$ such that each $x\rightarrow\rho_i(x,t)$ has bounded variation. Then $\rho$ satisfies the \emph{Rankine-Hugoniot condition}
\begin{equation}\label{Junction_equation}
\sum_{i=1}^n Q_e(\rho_i^{(L)}(b_i,t))=\sum_{j=n+1}^{n+m} Q_e(\rho_j^{(R)}(a_j,t))
\end{equation}
for almost every $t>0$ at the junction $J$.
\end{lemma}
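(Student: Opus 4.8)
The plan is to test the weak-solution identity of Definition \ref{def_Traffic_solution_at_junction} against test functions concentrated at the junction and to transfer the derivatives off the (merely BV) densities onto the test functions by an integration-by-parts / Gauss--Green argument. The key preliminary observation is that each $\rho_l$ is itself a weak solution of the scalar conservation law on the interior of its own edge: restricting the defining identity to test functions supported away from $J$ makes the junction-matching conditions hold trivially (both sides vanish), so the sum collapses to the single-edge identity. Hence the field $F_l:=\T{(Q_e(\rho_l),\rho_l)}$ is divergence-free in $\Omega_l:=(a_l,b_l)\times(0,\infty)$ in the distributional sense, and the whole space--time integral over each edge can be reduced to a trace term living on the junction face, after which the value-matching of the test functions at $J$ lets me factor out a common temporal profile.

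Concretely, I would first fix an arbitrary $\psi\in\kC_0^1((0,\infty))$ and, on each edge, take $\varphi_l(x,t):=\psi(t)\chi_l(x)$, where $\chi_l$ is a smooth spatial cut-off supported near the junction, equal to $1$ at the junction endpoint and with vanishing slope there. The resulting $\varphi=\T{(\varphi_1,\ldots,\varphi_{n+m})}$ is admissible in Definition \ref{def_Traffic_solution_at_junction}: all junction traces equal $\psi(t)$ (value matching), all junction slopes vanish (derivative matching), and the support avoids $t=0$, $t=\infty$ and the far ends of the edges. Applying the Gauss--Green formula for the divergence-free $F_l$ on $\Omega_l$, every bulk contribution collapses onto the junction face $\{x=b_i\}$, respectively $\{x=a_j\}$. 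On an incoming edge the outward spatial normal is $+1$, producing $+\int_0^\infty\psi(t)Q_e(\rho_i^{(L)}(b_i,t))\,\dd t$; on an outgoing edge it is $-1$, producing $-\int_0^\infty\psi(t)Q_e(\rho_j^{(R)}(a_j,t))\,\dd t$. Since the defining identity says the sum of all these contributions vanishes, I obtain
\[
\int_0^\infty\psi(t)\left(\sum_{i=1}^n Q_e(\rho_i^{(L)}(b_i,t))-\sum_{j=n+1}^{n+m} Q_e(\rho_j^{(R)}(a_j,t))\right)\dd t=0 .
\]
As $\psi\in\kC_0^1((0,\infty))$ is arbitrary, the fundamental lemma of the calculus of variations yields (\ref{Junction_equation}) for almost every $t>0$.

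The hard part is purely technical and is exactly where the bounded-variation hypothesis enters: justifying that the spatial normal traces $Q_e(\rho_i^{(L)}(b_i,t))$ and $Q_e(\rho_j^{(R)}(a_j,t))$ exist and that the Gauss--Green identity is valid for the merely bounded, non-smooth fields $F_l$. The assumption that $x\mapsto\rho_l(x,t)$ has bounded variation provides the one-sided spatial traces of $\rho_l$ at the junction (already needed to state the lemma), and hence of $Q_e(\rho_l)$ by continuity of $Q_e$; together with $\operatorname{div}F_l=0$ this is precisely the setting of the normal-trace theory for divergence-measure fields, so that the formal integration by parts is rigorous and the normal trace of $F_l$ coincides with $Q_e$ evaluated at the trace of $\rho_l$. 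I would either cite this trace theory directly or, to keep things elementary, replace the single Gauss--Green step by an explicit integration by parts on each edge followed by passing to the limit along a family of cut-offs $\chi_l$ that sweep the junction, the BV bound ensuring convergence of the resulting boundary integrals to the claimed flux traces.
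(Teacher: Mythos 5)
Your proof is correct and is exactly the argument the paper has in mind: the paper's own ``proof'' is a one-line deferral to \cite[Lemma~5.1.9]{Networks}, described as ``a simple application of integration by parts,'' and your choice of admissible test functions $\varphi_l(x,t)=\psi(t)\chi_l(x)$ concentrated at the junction, followed by integration by parts (Gauss--Green) and the use of the BV hypothesis to identify the one-sided flux traces, is precisely that standard argument carried out in detail. No substantive difference in approach; yours just makes the deferred steps explicit.
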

\begin{proof}
The proof is a simple application of integration by parts and can by found in  \cite[Lemma 5.1.9]{Networks}.
\end{proof}

Definition \ref{def_Traffic_solution_at_junction} simply enforces the conservation of vehicles at $\kJ$. However it is also necessary to take into account the preferences of drivers how the traffic from incoming roads is distributed to outgoing roads according to some predetermined coefficients.

\begin{definition}[Traffic distribution matrix]
Let $J$ be a fixed vertex with $n$ incoming edges and $m$ outgoing edges. We define a \emph{traffic distribution matrix} $A$ as
\[
A=\begin{bmatrix}
\alpha_{n+1,1} & \cdots & \alpha_{n+1,n}\\
\vdots & \vdots & \vdots \\
\alpha_{n+m,1} & \cdots & \alpha_{n+m,n}
\end{bmatrix},
\]
where $0\leq\alpha_{j,i}\leq 1$ for all $i\in\lbrace 1,\ldots,n\rbrace,j\in\lbrace n+1,\ldots,n+m\rbrace$ and
\begin{equation}\label{sum_alpha}
\sum_{j=n+1}^{n+m}\alpha_{j,i}=1
\end{equation}
holds for all $i\in\lbrace 1,\ldots,n\rbrace$.
\end{definition}

The $i^{th}$ column of $A$ describes how the traffic from the incoming road $I_i$ distributes to the outgoing roads at the junction $J$. In other words, if X is the amount of traffic coming from road $I_i$ then $\alpha_{j,i}X$ is the amount of traffic going form $I_i$ towards road $I_j$.

Based on the traffic distribution matrix, the authors of \cite{Networks} define the following admissible traffic solution at a junction, also used for numerical simulations in \cite{RKDG}.

\begin{definition}[Admissible traffic solution at a junction, following \cite{Networks}]\label{def_network_solution}
Let $\rho=\T{(\rho_1,\ldots,\rho_{n+m})}$ be such that $\rho_i(\cdot,t)$ is of bounded variation for every $t\geq 0$. Then $\rho$ is called an \emph{admissible weak solution} of (\ref{LWR_network}) related to the matrix $A$ at the junction $J$ if the following properties hold:
\begin{itemize}
\item[(i)] $\rho$ is a weak solution at the junction $J$.
\item[(ii)] $Q_e(\rho_j^{(R)}(a_j,\cdot))=\sum_{i=1}^n\alpha_{j,i}Q_e(\rho_i^{(L)}(b_i,\cdot)),$ for all $j=n+1,\ldots,n+m$.
\item[(iii)] $\sum_{i=1}^n Q_e(\rho_i^{(L)}(b_i,\cdot))$ is a maximum subject to (i) and (ii).
\end{itemize}
\end{definition}

\begin{remark}
Condition (ii) simply states that traffic from incoming roads is distributed to outgoing roads according to the traffic distribution matrix. Condition (iii) is a mathematical formulation of the assumption made in \cite{Networks}, that respecting (ii), \uvoz{drivers choose so as to maximize fluxes} through the junction.
\end{remark}

One problem with the approach of \cite{Networks} and \cite{RKDG} is that explicitly constructing the fluxes from Definition \ref{def_network_solution} requires the solution of a Linear Programming problem on the incoming fluxes. This is done in \cite{Networks} for the purposes of constructing a Riemann solver at the junction and in \cite{RKDG} for the purposes of obtaining numerical fluxes at the junction in order to formulate the DG scheme. Closed-form solutions are provided in \cite{RKDG} in the special cases $n=1,m=2$ and  $n=2,m=1$ and $n=2,m=2$. In Section \ref{Sec_DG_method_on_networks}, we present an alternative construction of fluxes at the junction which has the advantage of a simple formulation for general $n,m$. We will give an interpretation of our construction, which shows that it is more suited for certain situations, giving more realistic behavior of the drivers, than the approach from Definition \ref{def_network_solution}. We compare the two approaches in Section \ref{sec_num_exp_comparison}.

\section{Discontinuous Galerkin method}\label{DG_intro}
We discretize the governing equation (\ref{Conservation_law}) using the \emph{discontinuous Galerkin} (DG) method. This method introduced by Reed and Hill in \cite{PrvniDG} represents a robust, reliable and accurate numerical method for the solution of first order hyperbolic problems. The DG method uses discontinuous piecewise polynomial approximation of the exact solution along with a suitable weak form of the governing equations and can thus be viewed as a combination of the the finite element and finite volume methods, cf. \cite{DG}, \cite{Stabilization}. One advantage of the DG method over standard finite elements is it's robustness with respect to the Gibbs phenomenon. This occurs when a continuous approximation is used to approximate a discontinuous function -- these typically arise as solutions to nonlinear first order hyperbolic problems, such as those considered here.

In general, the DG method is described on a polygonal (polyhedral) domain $\Omega\subset\R^d$, $d\in\N$. Let $\kT_h$ be a partition of $\overline{\Omega}$ into a finite number of closed elements $K$ with mutually disjoint interiors, such that
\[
\overline{\Omega}=\bigcup_{K\in\kT_h}K.
\]
Since the traffic model is defined on a line, we consider $\Omega\subset\R$, $\Omega=(a,b)$. In the 1D case, an element $K$ is an interval $\left[ a_K,\ b_K\right]$, where $a_K$ and $b_K$ are boundary points of $K$. We set $h_K=\abs{b_K-a_K}$, $h=\max_{K\in\kT}h_K$. We denote the set of all boundary faces (points in 1D) of all elements by $\kF_h$. Further, we define the set of all inner points by
\[
\kF_h^I=\lbrace x\in\kF_h;\ x\in\Omega\rbrace 
\]
and the set of boundary points $\kF_h^B=\lbrace a,\ b\rbrace$. Obviously $\kF_h=\kF_h^I\cup\kF_h^B$.

We use a suitable weak formulation of (\ref{Conservation_law}) on the \emph{broken Sobolev space} $H^k(\Omega,\ \kT_h)\\=\lbrace v;\ v|_K\in H^k(K),\ \forall K\in\kT_h\rbrace$, where $H^k(I)$, $k\in\N$ be the Sobolev space over an interval $I$. Functions from this space will be approximated by discontinuous piecewise polynomial functions
\[
S_h=\lbrace v;\ v|_K\in P^p(K),\ \forall K\in\kT_h\rbrace,
\]
where $P^p(K)$ denotes the space of all polynomials on $K$ of degree at most $p$.

For each point $x\in\kF_h^I$ there exist two neighbours $K_x^{(L)},\ K_x^{(R)}\in\kT_h$ such that $x=K_x^{(L)}\cap K_x^{(R)}$. Every function $v\in H^k(\Omega,\kT_h)$ is generally discontinuous at $x\in\kF_h^I$. Thus, for all $x\in\kF_h^I$, we introduce the following notation:
\begin{equation*}
v^{(L)}(x)=\lim_{y\rightarrow x_-}v(y),\qquad v^{(R)}(x)=\lim_{y\rightarrow x_+}v(y),\qquad\left[ v\right]_x=v^{(L)}(x)-v^{(R)}(x).
\end{equation*}
In order to have consistent notation, in the point $x\in\kF_h^B$ we define
\begin{align*}
v^{(R)}(a)=\lim_{y\rightarrow a_+}v(y),\qquad&v(a)=-\left[ v\right]_a=v^{(L)}(a):=v^{(R)}(a),\\
v^{(L)}(b)=\lim_{y\rightarrow b_-}v(y),\qquad&v(b)=\left[ v\right]_b=v^{(R)}(b):=v^{(L)}(b).
\end{align*}
The definition of jump $\left[ v\right]_a:=-v^{(R)}(a)$ or $\left[ v\right]_b:=v^{(L)}(b)$ may seem inconsistent with the definition on interior points. This notation is used due to the integration by parts in following sections. Our notation allows us to simplify those terms.

For simplicity, if $\left[\cdot\right]_x$ appear in a sum of the form $\sum_{x\in\kF_h}\ldots$, we omit the index $x$ and write $\left[\cdot\right]$.

\subsection{First order hyperbolic problems}
We formulate the DG method for first order hyperbolic problems of the form
\begin{align}
u_t+f(u)_x=0,&\qquad x\in\Omega,\ t\in(0,T),\label{DG_Hyperbolic_problem}\\
u=u_D,&\qquad x\in\kF_h^D,\ t\in(0,T),\\
u(x,0)=u_0(x),&\qquad x\in\Omega,\label{DG_Hyperbolic_problem_initial}
\end{align}
where the Dirichlet boundary condition $u_D:\kF_h^D\times(0,T)\rightarrow\R$ and the initial condition $u_0:\Omega\rightarrow\R$ are given functions. The Dirichlet boundary condition is prescribed only on the inlet $\kF_h^D\subseteq\kF_h^B$, respecting the direction of information propagation. The function $f\in\kC^1(\R)$ is called the \emph{convective flux}. Our aim is to seek a function $u:\Omega\times(0,T)\rightarrow\R$ such that (\ref{DG_Hyperbolic_problem})--(\ref{DG_Hyperbolic_problem_initial}) is satisfied. As we have seen, problem (\ref{DG_Hyperbolic_problem}) is the main part of macroscopic equations for traffic.

In order to derive the DG formulation of (\ref{DG_Hyperbolic_problem}), we multiply by a test function $\varphi\in H^1(\Omega,\kT_h)$ and integrate over an arbitrary element $K\in\kT_h$. Then we apply integration by parts and obtain
\begin{equation}
\int_K u_t\varphi\ \dd x-\int_K f(u)\varphi'\ \dd x+f(u(b_K,t))\varphi^{(L)}(b_K)-f(u(a_K,t))\varphi^{(R)}(a_K)=0.
\end{equation}
Finally, we sum over all $K\in\kT_h$ and obtain
\[
\int_\Omega u_t\varphi\ \dd x-\sum_{K\in\kT_h}\int_K f(u)\varphi'\ \dd x+\sum_{x\in\kF_h}f(u)\left[\varphi\right] =0.
\]

We wish to approximate $u$ by a function $u_h\in H^1(\Omega,\kT_h)$ which is in general discontinuous on $\kF_h$. Thus, we need to give proper meaning to the function $f(u_h)$ in points $x\in\kF_h$. We proceed similarly as in the finite volume method and use the approximation
\begin{equation}
f(u_h)\approx H(u_h^{(L)},u_h^{(R)}),
\end{equation}
where $H(\cdotp,\cdotp)$ is a \emph{numerical flux}, cf. \cite{DG}. Finally, we define the DG solution of problem (\ref{DG_Hyperbolic_problem}).

\begin{definition}[DG solution]
The function $u_h:\Omega\times(0,T)\rightarrow\R$ is called a DG finite element solution of hyperbolic problem (\ref{DG_Hyperbolic_problem})--(\ref{DG_Hyperbolic_problem_initial}) if the following properties hold:
\begin{itemize}
\item[(i)] $u_h\in\kC^1\left(\left[ 0,T\right];S_h\right)$.
\item[(ii)] $u_h(0)=u_{h0}$, where $u_{h0}$ denotes an $S_h$ approximation of the initial condition $u_0$.
\item[(iii)] $u_h=u_D$ for all $x\in\kF_h^D,\ t\in(0,T)$.
\item[(iv)] For all $\varphi\in S_h$ and for all $t\in\left( 0,T\right)$, $u_h$ satisfies
\begin{equation}\label{DG_Weak}
\int_\Omega (u_h)_t\varphi\ \dd x-\sum_{K\in\kT_h}\int_K f(u_h)\varphi'\ \dd x+\sum_{x\in\kF_h}H(u^{(L)}_h,u^{(R)}_h)\left[\varphi\right] =0.
\end{equation}
\end{itemize}
\end{definition}

\subsection{Implementation details}
In our implementation, we use the \emph{Lax--Friedrichs} numerical flux, cf. \cite{DG}, \cite{Stabilization}. We define
\begin{equation}\label{Lax_Friedrichs_flux}
H\big( u^{(L)},u^{(R)}\big)=\dfrac{1}{2}\left( f\big(u^{(L)}\big) +f\big(u^{(R)}\big) -\alpha\big( u^{(R)}-u^{(L)}\big)\right),
\end{equation}
where
\[
\alpha=\max_{u\in\left[ u^{(L)},u^{(R)}\right]}\abs{f'(u)}.
\]
Here we have assumed that $u^{(L)}\leq u^{(R)}$, otherwise we take the maximum over $[u^{(R)},u^{(L)}]$. In practice, we do not solve the maximization problem. We approximate by evaluating $\abs{f'(u)}$ in the points $u^{(L)}$, $u^{(R)}$ and $\tfrac{1}{2}(u^{(L)}+u^{(R)})$ and we take the maximal value. 

Integrals over individual elements in (\ref{DG_Weak}) are evaluated using Gaussian quadrature rules. Basis functions of the space $S_h$ are taken as Legendre polynomials on individual elements, where the support of each basis function is a single element. By writing $u_h$ in terms of basis functions in space and setting the test function $\varphi$ to a elements of the basis, equation (\ref{DG_Weak}) reduces to a system of ordinary differential equations which is solved by the explicit Euler method. We have also implemented higher order Adamsâ€“Bashforth methods, however numerical experiments show that the simple Euler method is sufficiently accurate for our purposes.

The DG method is much less susceptible to the Gibbs phenomenon than the finite element method, however spurious oscillations can still occur locally in the vicinity of discontinuities or steep gradients in the solution. There are several approaches how to treat these local oscillations, e.g. adding local artificial diffusion. In our case, we apply limiters to the DG solution. In our implementation, we use the \emph{modified minmod limiter} from  \cite{Stabilization_original}, cf. also \cite{Stabilization}.

Often the solution of (\ref{DG_Hyperbolic_problem}) is a physical quantity which satisfies some admissibility conditions, e.g. the physical density must be positive. If we obtain a solution which is not in the admissible interval, e.g. due to overshoots or undershoots, the problem can become ill-posed or even undefined. This is our case, since the traffic density $\rho$ must naturally satisfy $\rho\in[0,\rho_{\max}]$. The DG method by itself does not guaranty such bounds are satisfied for the discrete solution. Limiters usually prevent this from happening, however in traffic flows, it is natural that entire regions of the computational domain have $\rho=0$ or $\rho=\rho_{\max}$ and it is easy for the algorithm to produce e.g. negative density due to round-off errors. To prevent this from happening, we use the following procedure. If the average density on an element $K$ is in the admissible interval, we decrease the slope of our solution so that the modified density lies in $[\rho_{\min},\rho_{\max}]$ similarly as in the limiting procedure. The important property is that the integral $\int_{a_K}^{b_K}\rho(x)\ \dd x$ does not change after the application of the limiter. As further insurance, if the average density on an element $K$ is not in the admissible interval $[0,\rho_{\max}]$, then we change the solution such that $\rho\equiv 0$ or $\rho\equiv\rho_{\max}$ on the whole element $K$. The latter case, when the average density on an element is not in the admissible interval is extremely rare and, for us, serves as an indicator that the time step is too large or the mesh is too coarse. Since in this case the described procedure does not conserve the total number of vehicles, we rather decrease the time step or increase the number of elements. Figure \ref{obr_Limiters} demonstrates the effect of applying the minmod limiter along with the described procedures enforcing the admissible interval.

\begin{figure}[t!]\centering
\subfloat[Result without the application of limiters.]{\includegraphics[height=2.4in]{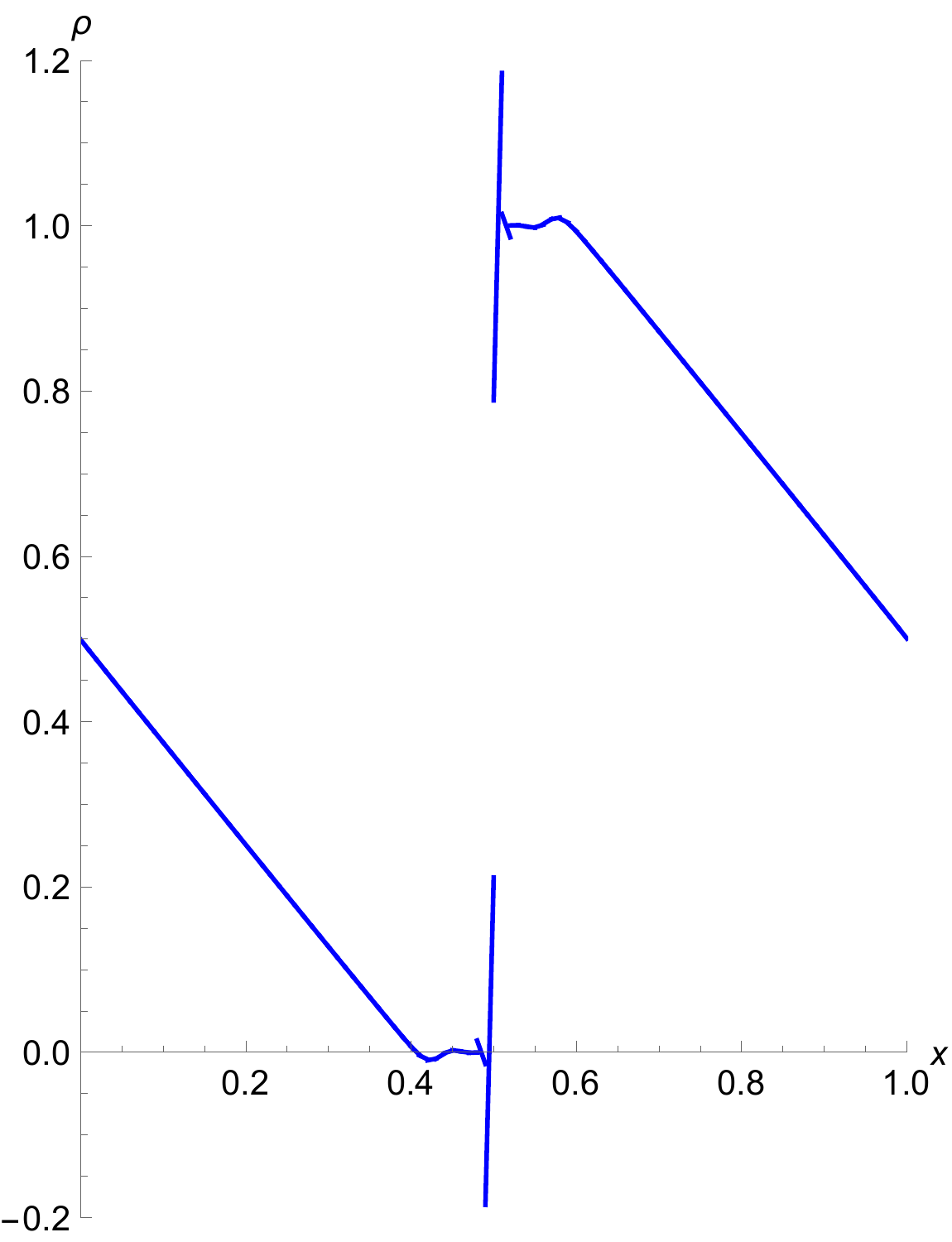}}
\hspace{10pt}
\subfloat[Result with the application of limiters.]{\includegraphics[height=2.4in]{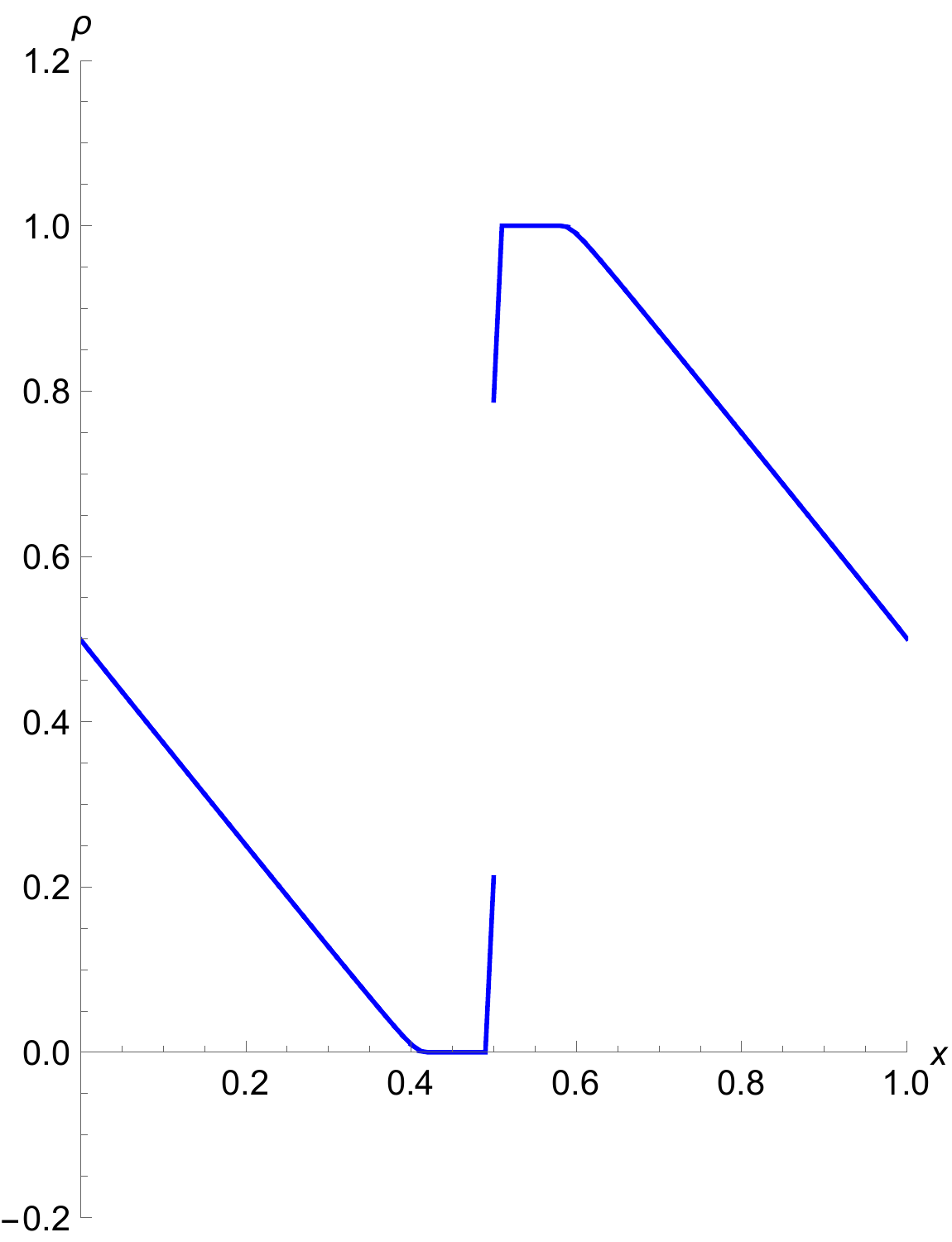}}
\caption{The effect of limiters on the numerical solution.}
\label{obr_Limiters}
\end{figure}

\section{DG method on networks}\label{Sec_DG_method_on_networks}
Now we shall formulate the DG method for LWR models on networks. Throughout this section we shall deal with the simple case of a network with a single junction. This is purely for simplicity which allows us to keep the notation relatively simple. The case of general networks is then a straightforward extension. First we construct suitable numerical fluxes at the junction, then we define the DG scheme on the network using these fluxes. Throughout this section we use the notation from Section \ref{Network}. 

\subsection{Numerical fluxes at junctions}\label{Subsection_Fluxes_junctions}
In order to formulate the DG scheme on a simple network, we first need to construct the numerical fluxes at the junction. We take a different approach from that of \cite{RKDG} and \cite{Networks}. Our approach has the advantage that it is simple and explicitly constructed for all junction types. We shall prove the basic properties of this construction and discuss the differences with the approach of \cite{RKDG} and \cite{Networks}.

At the junction, we consider an incoming road $I_i$ and an outgoing road $I_j$. If these roads were the only roads at the junction, i.e. if they were directly connected to each other, the (numerical) flux of traffic from $I_i$ to $I_j$ would simply be $H\big(\rho_{hi}^{(L)}(b_i,t),\rho_{hj}^{(R)}(a_j,t)\big)$, where $\rho_{hi}$ and $\rho_{hj}$ are the DG solutions on $I_i$ and $I_j$, respectively. From the traffic distribution matrix, we know the ratios of the traffic flow distribution to the outgoing roads. Thus, we take the numerical flux $H_j(t)$ at the left point of the outgoing road $I_j$, i.e. at the junction, at time $t$ as
\begin{equation}\label{num_flux_out}
H_j(t):=\sum_{i=1}^{n} \alpha_{j,i} H\big(\rho_{hi}^{(L)}(b_i,t),\rho_{hj}^{(R)}(a_j,t)\big),
\end{equation}
for $j=n+1,\ldots,n+m$. The numerical flux $H_j(t)$ can be viewed as the DG analogue of taking the combined traffic outflow $\sum_{i=1}^{n} \alpha_{j,i} Q_e\big(\rho_i^{(L)}(b_i,t)\big)$ from all incoming roads and prescribing it as the inflow of traffic to the road $I_j$.

Similarly, we take the numerical flux $H_i(t)$ at the right point of the incoming road $I_i$, i.e. at the junction, at time $t$ as
\begin{equation}\label{num_flux_in}
H_i(t):=\sum_{j=n+1}^{n+m} \alpha_{j,i} H\big(\rho_{hi}^{(L)}(b_i,t),\rho_{hj}^{(R)}(a_j,t)\big),
\end{equation}
for $i=1,\ldots,n$. Again, this can be viewed as an approximation of the traffic  flow $\sum_{j=n+1}^{n+m} \alpha_{j,i} Q_e\big(\rho_j^{(R)}(a_j,t)\big)$ being prescribed as the outflow of traffic from $I_i$.

This choice of numerical fluxes at the junction satisfies the discrete analogue to the Rankine--Hugoniot condition (\ref{Junction_equation}), which in turn means that the DG solution using these fluxes conserves the total amount of cars passing through the junction (cf. Theorem \ref{veta_properitoes_of_solution3}). 
\begin{lemma}[Discrete Rankine--Hugoniot condition]\label{veta_properitoes_of_solution}
The numerical fluxes (\ref{num_flux_out}) and (\ref{num_flux_in}) satisfy the discrete version of the Rankine--Hugoniot condition (\ref{Junction_equation}):
\begin{equation}
\sum_{i=1}^n H_i(t)=\sum_{j=n+1}^{n+m} H_j(t).
\end{equation}
\end{lemma}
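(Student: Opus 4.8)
The plan is to observe that both sides of the claimed identity, after substituting the definitions (\ref{num_flux_out}) and (\ref{num_flux_in}), are literally the same double sum, so that the lemma reduces to interchanging the order of summation in a finite sum.

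First I would introduce the shorthand $H_{j,i}:=H\big(\rho_{hi}^{(L)}(b_i,t),\rho_{hj}^{(R)}(a_j,t)\big)$ for the pairwise numerical flux from the incoming road $I_i$ to the outgoing road $I_j$. With this notation the two defining formulas become $H_j(t)=\sum_{i=1}^{n}\alpha_{j,i}H_{j,i}$ for $j=n+1,\ldots,n+m$, and $H_i(t)=\sum_{j=n+1}^{n+m}\alpha_{j,i}H_{j,i}$ for $i=1,\ldots,n$. The essential point, which the shorthand makes transparent, is that the same quantity $\alpha_{j,i}H_{j,i}$ appears in the incoming flux $H_i$ and in the outgoing flux $H_j$.

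Next I would sum each family over its appropriate index set, obtaining
\[
\sum_{i=1}^{n}H_i(t)=\sum_{i=1}^{n}\sum_{j=n+1}^{n+m}\alpha_{j,i}H_{j,i},\qquad
\sum_{j=n+1}^{n+m}H_j(t)=\sum_{j=n+1}^{n+m}\sum_{i=1}^{n}\alpha_{j,i}H_{j,i}.
\]
The key step is then to recognize that both right-hand sides sum the identical summand $\alpha_{j,i}H_{j,i}$ over the same rectangular index set $\{1,\ldots,n\}\times\{n+1,\ldots,n+m\}$, differing only in the order in which the two indices are iterated. Since these are finite sums, the order is irrelevant (this is simply Fubini for finite sums), and the two expressions coincide, which is precisely the asserted equality $\sum_{i=1}^{n}H_i(t)=\sum_{j=n+1}^{n+m}H_j(t)$.

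I do not expect any genuine obstacle: the statement is a bookkeeping identity built into the symmetric construction of the fluxes. The only remark I would make explicit is that the proof uses \emph{neither} the specific form of the numerical flux $H$ \emph{nor} the column-normalization (\ref{sum_alpha}) of the traffic distribution matrix; it rests solely on the fact that the same coefficient $\alpha_{j,i}$ weights the same pairwise flux $H_{j,i}$ in both definitions. This cleanly isolates what makes the discrete Rankine--Hugoniot condition hold and foreshadows the global conservation of vehicles asserted in Theorem \ref{veta_properitoes_of_solution3}.
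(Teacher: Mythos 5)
Your proof is correct and follows exactly the same route as the paper: substitute the definitions of $H_i$ and $H_j$, observe that both sides are the same finite double sum of $\alpha_{j,i}H\big(\rho_{hi}^{(L)}(b_i,t),\rho_{hj}^{(R)}(a_j,t)\big)$ over $\{1,\ldots,n\}\times\{n+1,\ldots,n+m\}$, and interchange the order of summation. Your closing remark that neither the specific flux $H$ nor the normalization (\ref{sum_alpha}) is needed is a nice observation the paper does not state explicitly, but the argument itself is identical.
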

\begin{proof}
From the definition of $H_i$ and $H_j$, we immediately obtain
\begin{align*}
\sum_{i=1}^n H_i(t)=\sum_{i=1}^n\sum_{j=n+1}^{n+m} \alpha_{j,i} H\big(\rho_{hi}^{(L)}(b_i,t),\rho_{hj}^{(R)}(a_j,t)\big)
=\sum_{j=n+1}^{n+m}\sum_{i=1}^n \alpha_{j,i} H\big(\rho_{hi}^{(L)}(b_i,t),\rho_{hj}^{(R)}(a_j,t)\big)=\sum_{j=n+1}^{n+m} H_j(t).
\end{align*}
\end{proof}

\subsection{DG method on networks}\label{sec_DG_Networks}
Now we can formulate the DG method for the simplified network with one junction using the numerical fluxes defined in (\ref{num_flux_out}) and (\ref{num_flux_in}). Then the case of general networks is a straightforward generalization, where the aforementioned construction of numerical fluxes at junctions is applied on each junction separately.

We consider the DG formulation (\ref{DG_Weak}) on every incoming and outgoing road represented by the intervals $(a_i,b_i), i=1,\ldots,n$ and $(a_j,b_j), j=n+1,\ldots,n+m$, respectively. Since the DG method is applied on finite intervals, we replace the endpoints at $\pm\infty$ from Definition \ref{Def_network} by artificial inflow/outflow boundaries at finite points along with inflow Dirichlet data. For every interval $(a_k,b_k), k=1,\ldots,n+m$, we consider a partition $\kT_{hk}$ along with the corresponding discrete space $S_{hk}$. We write the DG formulation directly for the case of LWR models (\ref{LWR_problem}) with unknown density $\rho$ and flux $Q_e(\rho)$.

\begin{definition}[DG formulation on a simple network]\label{def_DG_solution_network}
We seek functions $\rho_{hk}\in\kC^1\left(\left[ 0,T\right];S_{hk}\right)$, $k=	1,\ldots,n+m$ satisfying the following.
\begin{itemize}
\item[(i)] \emph{Incoming roads:} For all $i=1\ldots,n$ and all $\varphi_i\in S_{h_i}$
\begin{equation}\label{DG_Weak_inc}
\begin{split}
\int_{a_i}^{b_i} (\rho_{hi})_t\varphi_i\ \dd x-\sum_{K\in\kT_{hi}}\int_K Q_e(\rho_{hi})\varphi_i'\ \dd x+ \sum_{x\in\kF_{hi}^I}H\big(\rho_{hi}^{(L)},\rho_{hi}^{(R)}\big)\left[\varphi_i\right]&\\ +H_i\varphi_i^{(L)}(b_i) -H\big(\rho_{Di},\rho_{hi}^{(R)}(a_i)\big)\varphi_i^{(R)}(a_i)&=0,
\end{split}
\end{equation}
where $H_i=H_i(t)$ is the numerical flux defined in (\ref{num_flux_in}) and $\rho_{Di}$ is the Dirichlet datum corresponding to the left artificial inflow boundary point $a_i$ of $(a_i,b_i)$.
\item[(ii)] \emph{Outgoing roads:} For all $j=n+1,\ldots,n+m$ and all $\varphi_j\in S_{h_j}$
\begin{equation}\label{DG_Weak_out}
\begin{split}
\int_{a_j}^{b_j} (\rho_{hj})_t\varphi_j\ \dd x-&\sum_{K\in\kT_{hj}}\int_K Q_e(\rho_{hj})\varphi_j'\ \dd x+ \sum_{x\in\kF_{hj}^I}H\big(\rho_{hj}^{(L)},\rho_{hj}^{(R)}\big)\left[\varphi_j\right]\\  &\quad +H\big(\rho_{hj}^{(L)}(b_j),\rho_{hj}^{(L)}(b_j)\big)\varphi_j^{(L)}(b_j) -H_j\varphi_j^{(R)}(a_j)=0,
\end{split}
\end{equation}
where $H_j=H_j(t)$ is the numerical flux defined in (\ref{num_flux_out}).
\end{itemize}
\end{definition}

\begin{remark}
We note that the choice of the arguments in the numerical flux at the artificial boundary point $b_j$ in (\ref{DG_Weak_out}) corresponds to an outflow boundary condition. This term could be rewritten using the original physical flux $Q_e(\rho_{hj}^{(L)}(b_j))$ due to consistency of the numerical flux $H$.
\end{remark}

As a corollary of Lemma \ref{veta_properitoes_of_solution}, we get the conservation of the total number of cars in the network in the DG solution up to the contribution of the inflow and outflow artificial boundaries.

\begin{theorem}[Conservation property of the DG scheme]\label{veta_properitoes_of_solution3}
The DG scheme from Definition \ref{def_DG_solution_network} conserves the total number of vehicles in the network in the sense that
\begin{equation}
\frac{\dd}{\dd t}\sum_{k=1}^{n+m}\int_{a_k}^{b_k}\rho_{hk}\,\dd x= \sum_{i=1}^n H\big(\rho_{Di},\rho_{hi}^{(R)}(a_i)\big) -\sum_{j=n+1}^{n+m} Q_e\big(\rho_{hj}^{(L)}(b_j)\big).
\nonumber
\end{equation}
\end{theorem}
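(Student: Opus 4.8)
The plan is to test the DG weak formulation with the constant function on each road and then exploit the discrete Rankine--Hugoniot condition of Lemma \ref{veta_properitoes_of_solution} to cancel the flux contributions at the junction. Since each discrete space $S_{hk}$ contains the polynomials of degree zero, the constant function $\varphi_k\equiv 1$ is an admissible test function in both (\ref{DG_Weak_inc}) and (\ref{DG_Weak_out}). First I would substitute this choice and simplify the resulting identity term by term on each individual road.

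With $\varphi_i\equiv 1$ on an incoming road, the volume term $\sum_{K}\int_K Q_e(\rho_{hi})\varphi_i'\,\dd x$ vanishes because $\varphi_i'=0$, and every interior jump $\left[\varphi_i\right]$ vanishes because the constant function is continuous across interior faces; the remaining time-derivative term collapses to $\frac{\dd}{\dd t}\int_{a_i}^{b_i}\rho_{hi}\,\dd x$. What survives from (\ref{DG_Weak_inc}) are only the two boundary terms, giving
\begin{equation}
\frac{\dd}{\dd t}\int_{a_i}^{b_i}\rho_{hi}\,\dd x = H\big(\rho_{Di},\rho_{hi}^{(R)}(a_i)\big)-H_i.
\nonumber
\end{equation}
The identical reasoning applied to (\ref{DG_Weak_out}) with $\varphi_j\equiv 1$ yields, after invoking the consistency of the numerical flux $H$ to rewrite $H\big(\rho_{hj}^{(L)}(b_j),\rho_{hj}^{(L)}(b_j)\big)=Q_e\big(\rho_{hj}^{(L)}(b_j)\big)$,
\begin{equation}
\frac{\dd}{\dd t}\int_{a_j}^{b_j}\rho_{hj}\,\dd x = H_j-Q_e\big(\rho_{hj}^{(L)}(b_j)\big).
\nonumber
\end{equation}

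Finally I would sum these $n+m$ identities. The inflow terms $H\big(\rho_{Di},\rho_{hi}^{(R)}(a_i)\big)$ and the outflow terms $Q_e\big(\rho_{hj}^{(L)}(b_j)\big)$ accumulate precisely into the two sums on the right-hand side of the claim, while the junction contributions combine into $-\sum_{i=1}^n H_i+\sum_{j=n+1}^{n+m}H_j$. The key step, and the only nonroutine one, is that this last quantity is exactly zero by the discrete Rankine--Hugoniot condition of Lemma \ref{veta_properitoes_of_solution}; this is where the specific construction of the junction fluxes (\ref{num_flux_out})--(\ref{num_flux_in}) enters, and it is precisely what encodes the conservation of vehicles at the junction. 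After this cancellation the stated identity follows immediately. I do not anticipate any genuine obstacle beyond taking care that constants indeed lie in each $S_{hk}$ and that the consistency relation $H(u,u)=Q_e(u)$ is available for the outflow boundary term at $b_j$.
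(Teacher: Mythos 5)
Your proposal is correct and follows essentially the same route as the paper's own proof: test with $\varphi_k\equiv 1$, observe that only the boundary terms survive, sum over all roads, and cancel the junction contributions via Lemma \ref{veta_properitoes_of_solution}. The only difference is that you spell out the road-by-road identities and the consistency relation $H(u,u)=Q_e(u)$ explicitly, which the paper handles implicitly.
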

\begin{proof}
We set all test functions $\varphi_k\equiv 1$ for all $k=1,\ldots,n+m$ and sum together all of the equations (\ref{DG_Weak_inc}) and (\ref{DG_Weak_out}) for all $i$ and $j$. We get
\begin{equation}
\begin{split}
\frac{\dd}{\dd t}\sum_{k=1}^{n+m}\int_{a_k}^{b_k}\rho_{hk}\,\dd x +\sum_{i=1}^n H_i -\sum_{j=n+1}^{n+m} H_j+\sum_{j=n+1}^{n+m} Q_e\big(\rho_{hj}^{(L)}(b_j)\big)  -\sum_{i=1}^n H\big(\rho_{Di},\rho_{hi}^{(R)}(a_i)\big) =0.
\end{split}
\nonumber
\end{equation}
The second and third terms cancel one another since $\sum_i H_i-\sum_j H_j=0$ due to Lemma \ref{veta_properitoes_of_solution}. This completes the proof.
\end{proof}

We note that although the numerical fluxes defined in (\ref{num_flux_out}) and (\ref{num_flux_in}) are defined using the traffic distribution matrix, the resulting fluxes $H_i, H_j$ do not satisfy the traffic distribution condition (ii) from Definition \ref{def_network_solution} exactly, but with an error given in the following lemma.

\begin{theorem}[Traffic distribution error]\label{veta_properitoes_of_solution2}
The numerical fluxes (\ref{num_flux_out}) and (\ref{num_flux_in}) satisfy
\begin{equation}
H_j(t)=\sum_{i=1}^n \alpha_{j,i} H_i(t) +E_j(t)
\end{equation}
for all $j=n+1,\ldots,n+m$, where the error term is
\begin{equation}\label{Traffic_distribution error_1}
E_j(t)=\sum_{i=1}^n\sum_{\substack{l=n+1\\l\neq j}}^{n+m} \alpha_{j,i}\alpha_{l,i} \big(H_{i,j}(t) -H_{i,l}(t)\big),
\end{equation}
where $H_{i,j}(t):=H\big(\rho_{hi}^{(L)}(b_i,t),\rho_{hj}^{(R)}(a_j,t)\big)$.
\end{theorem}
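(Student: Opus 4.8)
The plan is to prove the identity by direct algebraic manipulation, the essential tool being the column-normalization property (\ref{sum_alpha}) of the traffic distribution matrix. First I would introduce the shorthand $H_{i,j}(t)$ and rewrite the two numerical fluxes (\ref{num_flux_out}) and (\ref{num_flux_in}) as $H_j(t) = \sum_{i=1}^n \alpha_{j,i} H_{i,j}(t)$ and $H_i(t) = \sum_{l=n+1}^{n+m} \alpha_{l,i} H_{i,l}(t)$, where I deliberately relabel the outgoing-road summation index in $H_i$ to $l$ so that it does not clash with the fixed index $j$ appearing in the statement.

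Next I would compute the candidate principal term $\sum_{i=1}^n \alpha_{j,i} H_i(t)$. Substituting the expression for $H_i(t)$ and interchanging the two finite sums gives $\sum_{i=1}^n \sum_{l=n+1}^{n+m} \alpha_{j,i} \alpha_{l,i} H_{i,l}(t)$. The error term is then simply defined as $E_j(t) = H_j(t) - \sum_{i=1}^n \alpha_{j,i} H_i(t)$, and the whole task reduces to showing that this difference equals the claimed double sum (\ref{Traffic_distribution error_1}).

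The crux of the argument is a multiply-by-one step: since each column of $A$ sums to one, i.e. $\sum_{l=n+1}^{n+m} \alpha_{l,i} = 1$ by (\ref{sum_alpha}), I may insert this factor into $H_j(t)$ to obtain $H_j(t) = \sum_{i=1}^n \alpha_{j,i} H_{i,j}(t) = \sum_{i=1}^n \sum_{l=n+1}^{n+m} \alpha_{j,i} \alpha_{l,i} H_{i,j}(t)$. Now both $H_j(t)$ and the principal term are expressed as the same double sum over $i$ and $l$, carrying the common weight $\alpha_{j,i}\alpha_{l,i}$, so subtracting term by term immediately yields $E_j(t) = \sum_{i=1}^n \sum_{l=n+1}^{n+m} \alpha_{j,i} \alpha_{l,i} (H_{i,j}(t) - H_{i,l}(t))$.

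Finally I would observe that the summand vanishes identically when $l = j$, since in that case $H_{i,j}(t) - H_{i,l}(t) = 0$, so that value of the index contributes nothing and may be dropped from the inner sum, producing exactly the stated formula (\ref{Traffic_distribution error_1}). I expect no genuine analytic obstacle here: the entire statement is a bookkeeping rearrangement of finite sums, and the only points requiring care are keeping the two summation indices cleanly separated and justifying the multiply-by-one insertion, which is legitimate precisely because of the column-stochasticity (\ref{sum_alpha}) of the traffic distribution matrix.
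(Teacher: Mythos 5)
Your proposal is correct and follows essentially the same route as the paper's proof: both rewrite $H_j$ and $H_i$ in terms of the pairwise fluxes $H_{i,l}$, use the column-sum property (\ref{sum_alpha}) to insert the factor $\sum_{l=n+1}^{n+m}\alpha_{l,i}=1$, and then drop the vanishing $l=j$ term from the resulting double sum. The only cosmetic difference is where the multiply-by-one is inserted (you apply it to $H_j$ directly, the paper applies it inside the bracket after factoring out $\alpha_{j,i}$), which is an immaterial reordering of the same algebra.
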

\begin{proof}
By definition (\ref{num_flux_out}),
\begin{equation}
H_j(t)=\sum_{i=1}^n\alpha_{j,i}H_{i,j}(t) =\sum_{i=1}^n\alpha_{j,i}H_{i}(t) +\underbrace{\sum_{i=1}^n\alpha_{j,i}\big(H_{i,j}(t)-H_i(t)\big)}_{E_j(t)},
\end{equation}
where $E_j(t)$ is the error term which we will show has the form (\ref{Traffic_distribution error_1}): by definition (\ref{num_flux_in}), we have
\begin{equation}
\begin{split}
E_j(t)&= \sum_{i=1}^n\alpha_{j,i}\Big(H_{i,j}(t)-\sum_{l=n+1}^{n+m}\alpha_{l,i}H_{i,l}(t)\Big)= \sum_{i=1}^n\alpha_{j,i}\sum_{l=n+1}^{n+m}\alpha_{l,i}\big(H_{i,j}(t)-H_{i,l}(t)\big)
\\ &=\sum_{i=1}^n\sum_{\substack{l=n+1\\l\neq j}}^{n+m} \alpha_{j,i}\alpha_{l,i} \big(H_{i,j}(t) -H_{i,l}(t)\big),
\nonumber
\end{split}
\end{equation}
since $\sum_{l=n+1}^{n+m}\alpha_{l,i}=1$ due to (\ref{sum_alpha}). This completes the proof.
\end{proof}

\begin{example}
Let us consider a junction with one incoming and two outgoing roads. Assume for example that $\rho_{h1}^{(L)}(b_1,0)=0.5$, $\rho_{h2}^{(R)}(a_2,t)=0.2$, $\rho_{h3}^{(R)}(a_3,t)=0$, $\alpha_{2,1}=0.75$ and $\alpha_{3,1}=0.25$. We use the Greenshields model (with $v_{\max}=\rho_{\max}=1$) and the Lax--Friedrichs flux (\ref{Lax_Friedrichs_flux}). Then
\[
H_2(0)=\alpha_{2,1} H\big(\rho_{h1}^{(L)}(b_1,0),\rho_{h2}^{(R)}(a_2,0)\big)=0.22125
\]
and
\[
H_1(0)=\alpha_{2,1} H\big(\rho_{h1}^{(L)}(b_1,0),\rho_{h2}^{(R)}(a_2,0)\big)+\alpha_{3,1} H\big(\rho_{h1}^{(L)}(b_1,0),\rho_{h3}^{(R)}(a_3,0)\big)=0.315.
\]
Since $H_2(0)=0.2212\neq 0.23625=\alpha_{2,1}H_1(0)$, we see that in this case $H_2(\cdot)\neq\alpha_{2,1}H_1(\cdot)$, thus the property (ii) in Definition \ref{def_network_solution} is not satisfied exactly but with a small relative error of approximately $6\%$.	
\end{example}

Here we would like to comment on the interpretation of Theorem \ref{veta_properitoes_of_solution2} and on the differences between our construction of the numerical fluxes and that of \cite{Networks}, \cite{RKDG}.

\begin{enumerate}
\item \emph{Dedicated turning lanes.} Consider as an example a junction with one incoming and two outgoing roads. The flux considered in \cite{Networks}, \cite{RKDG} which satisfies Definition \ref{def_network_solution} is constructed as follows, cf. \cite{RKDG}. We compare the maximum possible fluxes which can inflow into the junction from the incoming road ($\gamma_1^{\max}$) or outflow from the junction to the outgoing roads ($\gamma_2^{\max}$ and $\gamma_3^{\max}$). We take $\gamma=\min\lbrace\gamma_1^{\max},\frac{\gamma_2^{\max}}{\alpha_{2,1}},\frac{\gamma_3^{\max}}{\alpha_{3,1}}\rbrace$ and use it as inflow into the junction from the incoming road, i.e. $\widehat{H}_1=\gamma$. The outflow to outgoing roads is then $\widehat{H}_2=\alpha_{2,1}\gamma$ and $\widehat{H}_3=\alpha_{3,1}\gamma$. Consider a situation, when e.g. the left outgoing road is blocked by a traffic jam. Then $\gamma=\widehat{H}_1=\widehat{H}_2=\widehat{H}_3=0$ and the whole junction is blocked. Namely, the cars cannot go turn right either, even though the right road might be completely empty. This corresponds to the situation where the roads are single-lane, thus the cars which want to turn right are blocked by the left-going cars which cannot proceed due to the congestion in the left outgoing road.

In our approach, we calculate the simple numerical fluxes $H$, where the left value is the traffic density of an incoming road and the right value is the traffic density of one of the outgoing roads. Then we take the possible fluxes and multiply them by the traffic distribution coefficients. If we consider the traffic jam in the left outgoing road as above, the flow into this road will be zero, however, the cars can still go into the second outgoing road according to the traffic distribution coefficient. Thus there will be a nonzero flow of traffic into the right lane. This can be interpreted as the existence of dedicated turning lanes in the road, so the left-going traffic, which is standing still, does not block the junction for the right-going traffic. 

Since macroscopic models in general are used to model long (multi--line) roads with huge numbers of vehicles, we view the behavior of our model as more realistic in this situation. The original approach from Definition \ref{def_network_solution} is aimed for single--lane roads, where passing is not possible.

\item \emph{Flexibility of drivers' preferences.} When inspecting the traffic distribution error (\ref{Traffic_distribution error_1}), one may ask when is $E_j$ equal to zero, i.e. when is the traffic distributed \emph{exactly} according to the a priori preferences given by the matrix $A$. This happens (among other), when for every incoming road $I_i$, all the (numerical) fluxes to the outgoing roads are equal (i.e. $H_{i,j}$=$H_{i,l}$ for all $l=n+1,\ldots,n+m$). From the point of view of a driver on road $I_i$ approaching the junction: the driver evaluates how all the traffic from $I_i$ would flow to each outgoing road $I_j$ individually (this is $H_{i,j}$). If all these flows to the outgoing roads are equal, the driver behaves according to the predetermined preferences given by the coefficients $\alpha_{j,i}$ This is however the idealized situation. When the flows to outgoing roads are not equal, there is some imbalance in the traffic network and the driver might decide to change his preference on the spot. 

Consider again the situation, when the left outgoing road is congested while the right road is empty. It is then natural that some drivers decide to change their original preference and take an alternative route, turning right instead of left. This is natural, since for most destinations, there are several possible routes and the driver can adapt his course according to the current situation. This is especially the case for city traffic. Thus we can interpret our approach to the flows at junctions as a certain flexibility of the drivers' preferences, while in the approach from Definition \ref{def_network_solution} the predetermined traffic distribution is strictly adhered to.

\item \emph{Traffic lights.} Traffic lights are considered in \cite{Networks}, where an example with two incoming and two outgoing roads is used. It is not explicitly stated, but only full green lights are allowed in this case. The presence of green or red light is then determined by the traffic distribution matrix. For example, If road $I_1$ has a green light and road $I_2$ has a red light, then $\sum_{j=n+1}^{n+m}\alpha_{j,1}=1$ and $\sum_{j=n+1}^{n+m}\alpha_{j,2}=0$. If we were to prescribe green only for some outgoing roads (not full green) for road $I_1$, i.e. $\sum_{j=n+1}^{n+m}\alpha_{j,1}<1$, the distribution of traffic will then not have to be satisfied.

As we mention above, our approach can be interpreted as describing dedicated turning lanes. We can therefore implement arbitrary time-varying traffic light combinations simply by setting certain coefficients to zero (red light). For example, if there is red for the direction from incoming road $I_i$ to outgoing road $I_j$ at the time $t$, then we set $\alpha_{j,i}(t):=0$. For the directions with a green light at time $t$, the traffic distribution coefficients are simply taken as the predetermined coefficients, i.e. $\alpha_{j,i}(t):=\alpha_{j,i}$. This allows us to simulate a wide range of traffic light combinations from the real world. On the other hand, traffic distribution error may increase or decrease depending on the length of the green light interval for each directions. We may again ask when will the traffic be distributed exactly according to the a priori preferences given by matrix $A$, as in Theorem \ref{veta_properitoes_of_solution2}. This happens (among other) in the idealized situation from the previous point (2) with full green lights. Under different circumstances (congestion in one outgoing road) and not full green lights, it is again natural that some real-world drivers would change their original preference, taking an alternative route, choosing a direction with longer green light intervals and/or higher traffic flow.

In Section \ref{num_exp_traffic_lights}, we demonstrate the performance of our method for a junction with 4 incoming and 4 outgoing roads and a complex periodic traffic light pattern with three distinct phases taken from a real-world junction.

\item \emph{Discontinuous Galerkin setting.} Finally, we can view our approach in the context of the DG method and its philosophy. Consider, for example, how the DG method treats Dirichlet boundary conditions. These are not strictly enforced as exact boundary values of the discrete solution as in the finite element method. Instead they are enforced in some weak sense, via numerical fluxes on the boundary (first order hyperbolic problems) or by penalization (elliptic problems). The result is that the Dirichlet boundary conditions are satisfied not exactly, but with some smaller or larger error, depending on the situation. The same holds for global continuity of the discrete solution, which is not enforced strictly, but rather in some weak sense (again via numerical fluxes or penalization terms). This flexibility in enforcing certain conditions is one of the main aspects that gives the DG method its robustness as opposed to the finite element method in situations such as steep boundary layers or discontinuities in the solution. We therefore view our approach to the fluxes at junctions as natural in the DG setting, where the boundary conditions at the junction (i.e. the traffic distribution coefficients) are satisfied exactly only in ideal circumstances (cf. the previous point (2) above), but with some smaller or larger error otherwise.
\end{enumerate}

\begin{remark}
Taking the above considerations into account, it seems that in our approach a more appropriate name for the traffic distribution matrix would be \emph{traffic preference matrix}, since it might not be realized exactly in extreme traffic situations.
\end{remark}

\section{Numerical results}\label{sec_Numerical_results}
In this section we present numerical results obtained using the method described in Sections \ref{DG_intro} and \ref{Sec_DG_method_on_networks}. We use $P^1$ elements with two quadrature points in each element. The implementation was done in the C++ language. We show the result of calculation of a bottleneck and on networks. We also compare our results with the approach from \cite{RKDG} where the authors use the maximum possible fluxes from Definition \ref{def_network_solution}.

\subsection{Bottleneck}
First we demonstrate results for a single road with a bottleneck, cf. Figure \ref{obr_Bottleneck}. The parameters taken in this example are taken from typical construction bottlenecks on highways in the Czech Republic. In Sector 1 and 4, we have maximal velocity $v_{\max,1}=1.3$ (corresponding to the speed limit 130 km/h) and maximal density $\rho_{\max,1}=2$, which corresponds to two lanes. The length of Sector 1 is $L_1=2$ (i.e. 2 km) and the length of Sector 4 is $L_4=1$. Sector 2 is a short sector with length $L_2=0.5$ and with decreased maximal velocity $v_{\max,2}=1$ (i.e. 100 km/h) and maximal density $\rho_{\max,2}=2$. Sector 3 is the bottleneck, where the maximal density is $\rho_{\max,3}=1$, which corresponds to one lane. The maximal velocity is $v_{\max,3}=0.8$ (i.e. 80 km/h) and the length of this sector is $L_3=2$.

The cars go from left to right. The boundary condition on the left is $\rho\left(0,t \right) = \tfrac{1}{20}\sin\left( \tfrac{2\pi t}{7}-\tfrac{\pi}{2}\right)+0.18$ to simulate time--varying traffic. The initial condition is an empty road. We use the Greenshields model. The time--step size is $\tau=10^{-4}$ and the length of each element is $h=\tfrac{1}{150}$.

In Figure \ref{obr_Bottleneck_result} we can observe the emergence of a traffic congestion between Sector 2 and Sector 3. The traffic congestion spreads backwards to Sector 1 and becomes longer or shorter depending on the boundary influx. Because $\rho(x,t)<\rho_{\max,i}$ for all $x$, $t$ and all sectors, the cars in the traffic congestion are still moving.

We note the relationship between maximal velocity and traffic density depending on the presence of a traffic congestion in Sectors 1 and 2. Without any traffic congestion, the density in Sector 1 (with higher maximal velocity) is lower than the density in Sector 2, cf. Figure \ref{obr_Bottleneck_result_3}. Conversely, with the traffic congestion, the density in Sector 1 is higher than the density in Sector 2, cf. Figure \ref{obr_Bottleneck_result_19}. This behavior arises since the traffic flow is the same in both sectors.
\begin{figure}[h!]\centering
\includegraphics[height=1.3in]{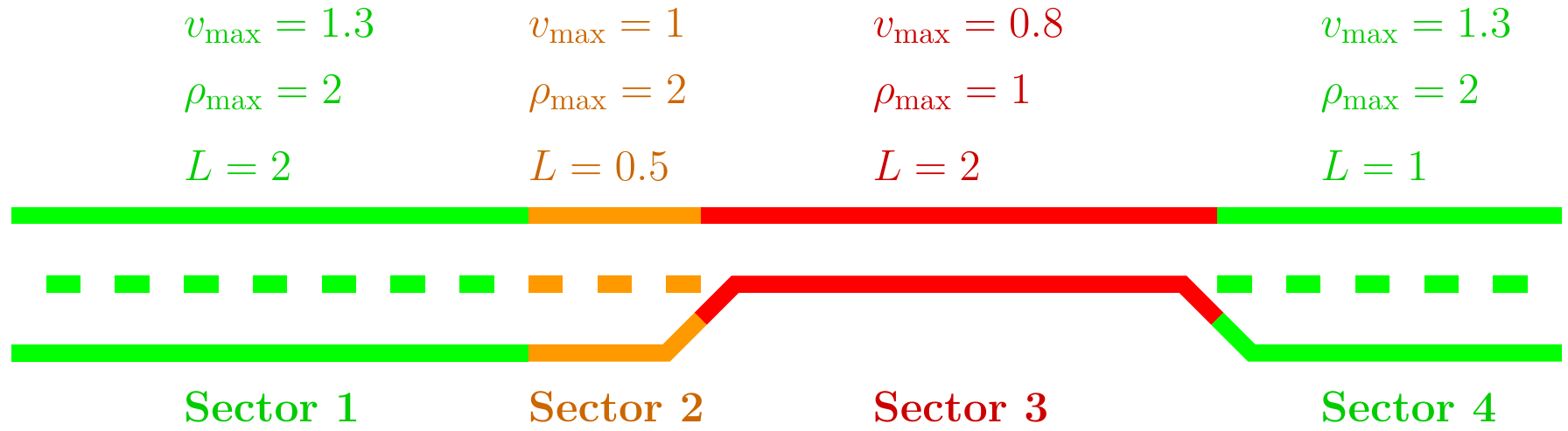}
\caption{Test road with bottleneck.}
\label{obr_Bottleneck}
\end{figure}
\begin{figure}[t!]\centering
\subfloat[$t=3$.]{\label{obr_Bottleneck_result_3}
\includegraphics[height=1.59in]{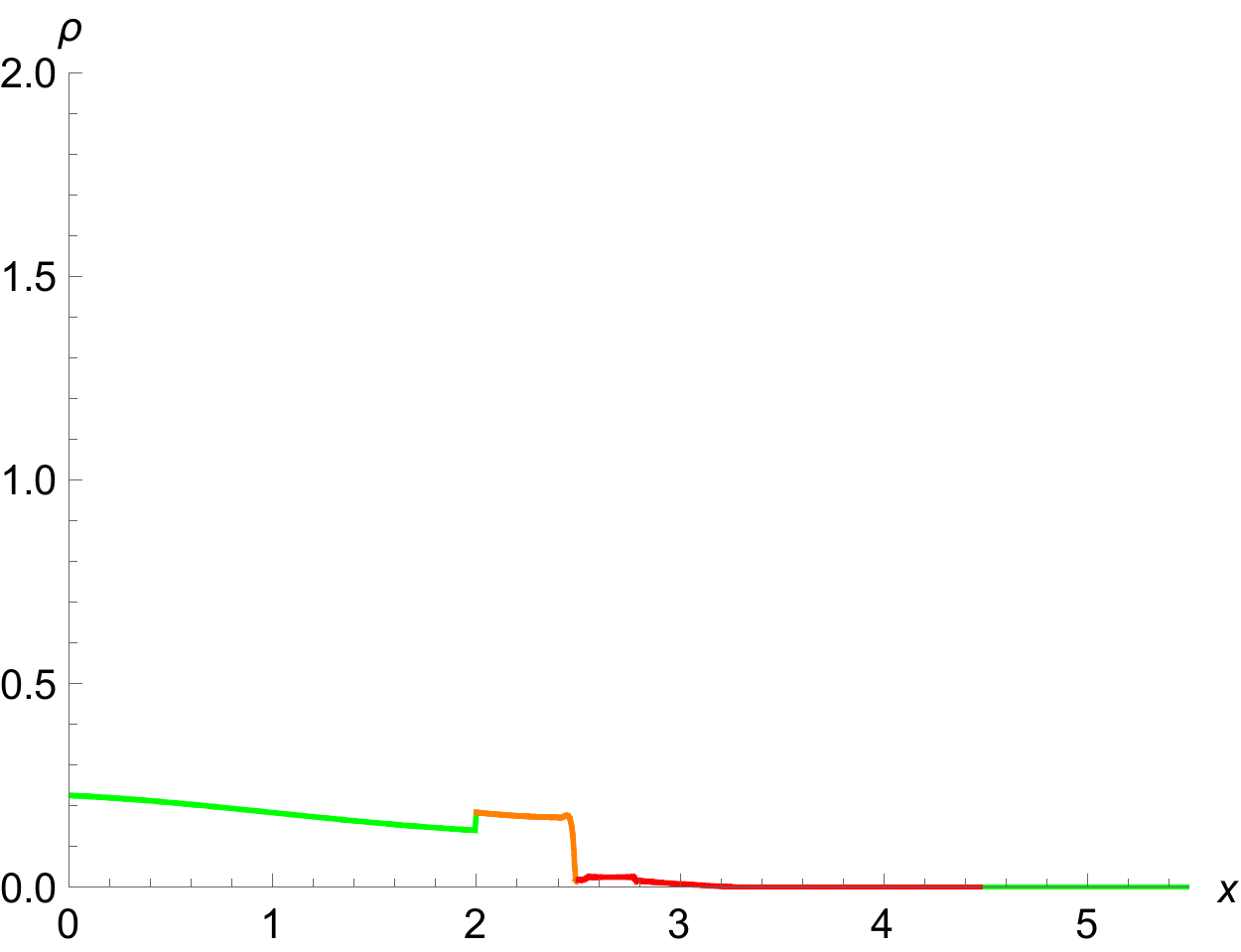}}
\hspace{5pt}
\subfloat[$t=5$.]{\includegraphics[height=1.59in]{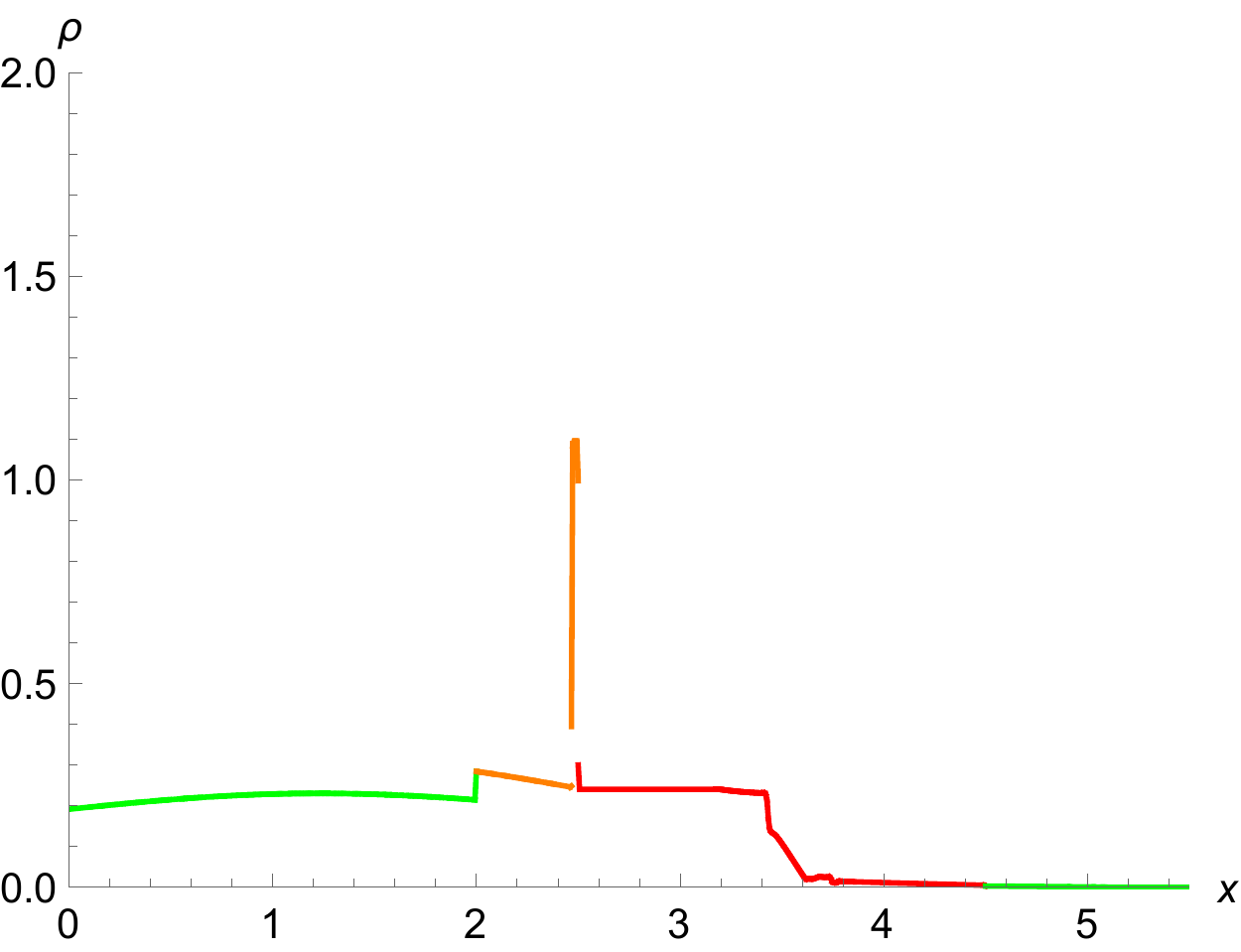}}
\hspace{5pt}
\subfloat[$t=7$.]{\includegraphics[height=1.59in]{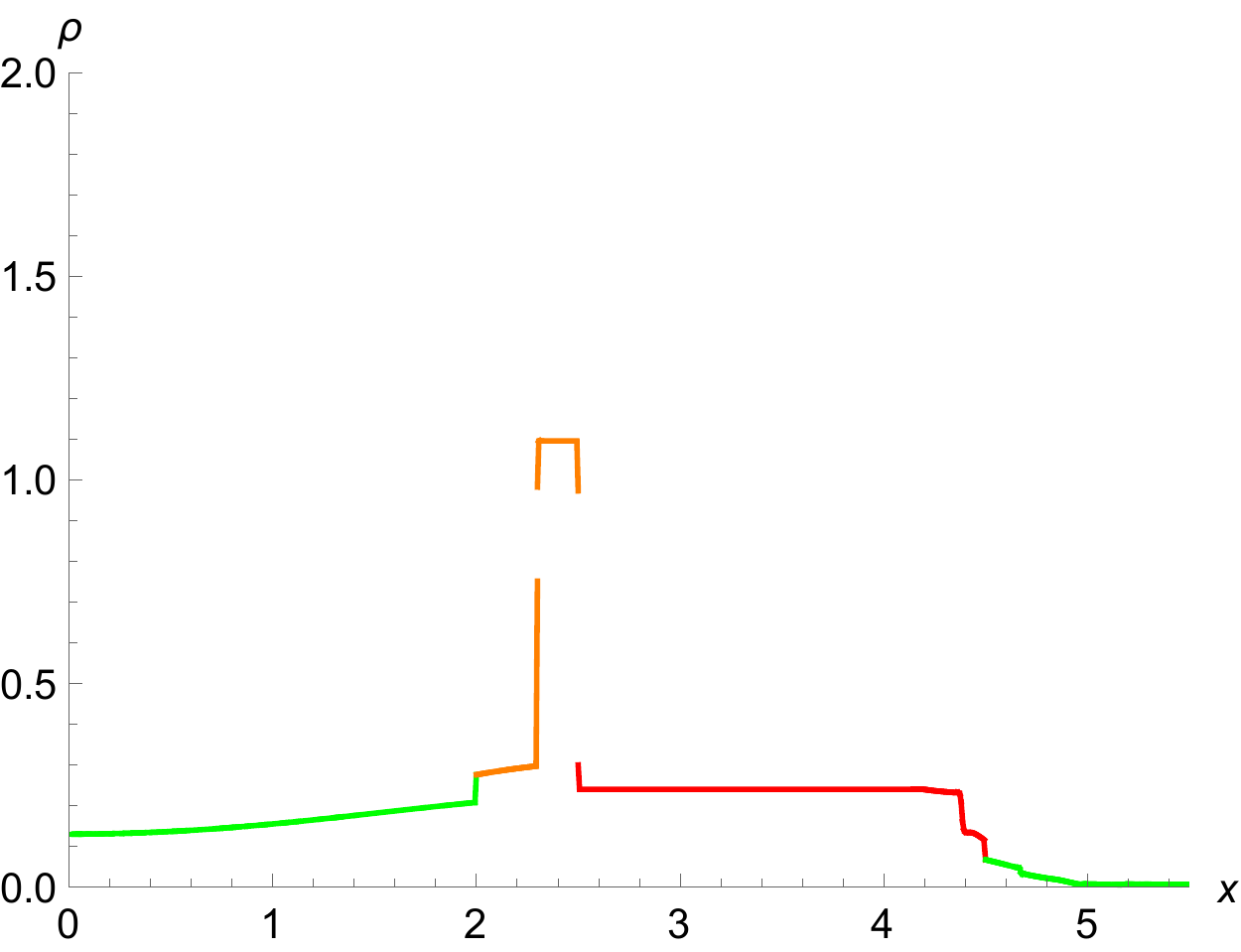}}
\hspace{5pt}
\subfloat[$t=9$.]{\includegraphics[height=1.59in]{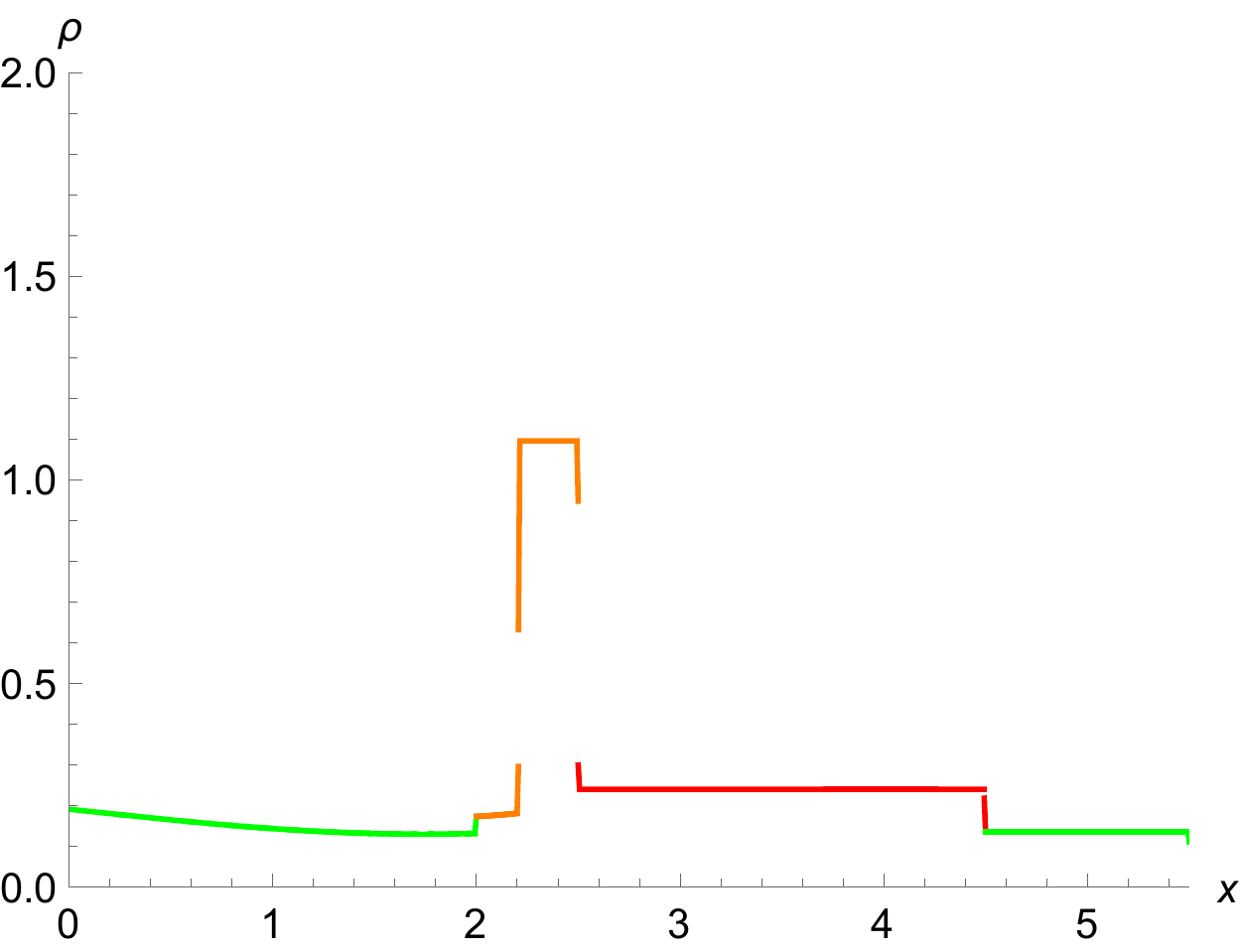}}
\hspace{5pt}
\subfloat[$t=11$.]{\includegraphics[height=1.59in]{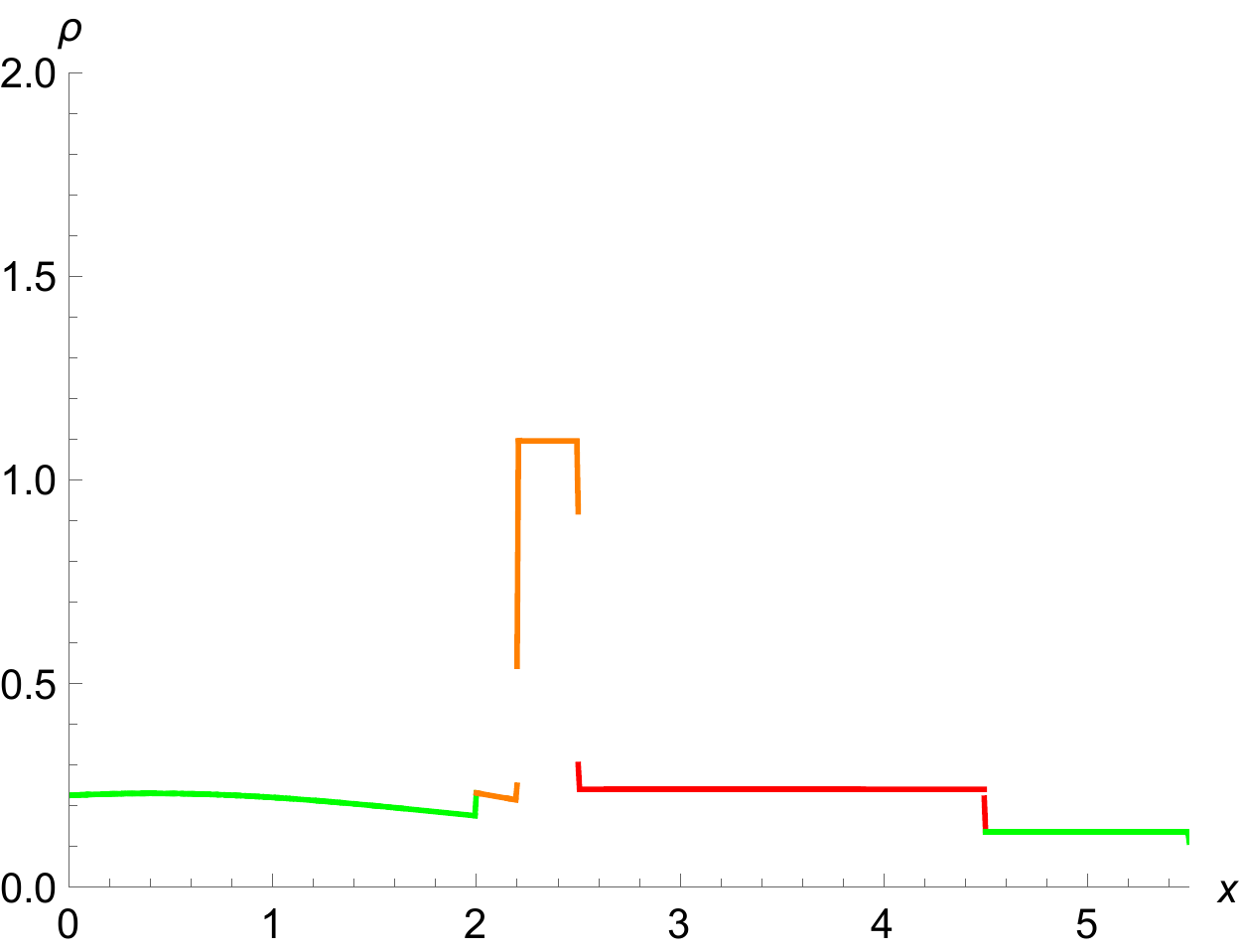}}
\hspace{5pt}
\subfloat[$t=13$.]{\includegraphics[height=1.59in]{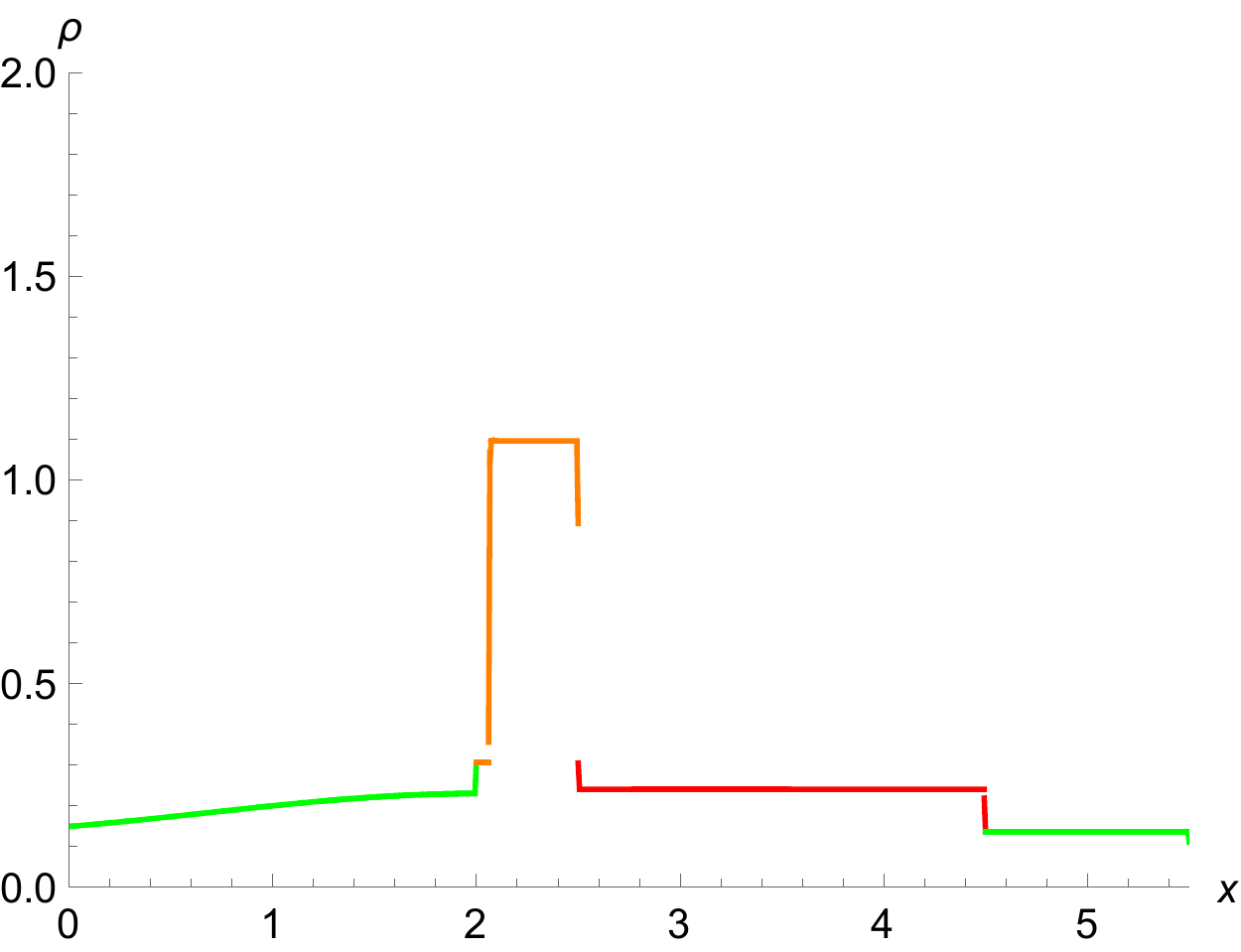}}
\hspace{5pt}
\subfloat[$t=15$.]{\includegraphics[height=1.59in]{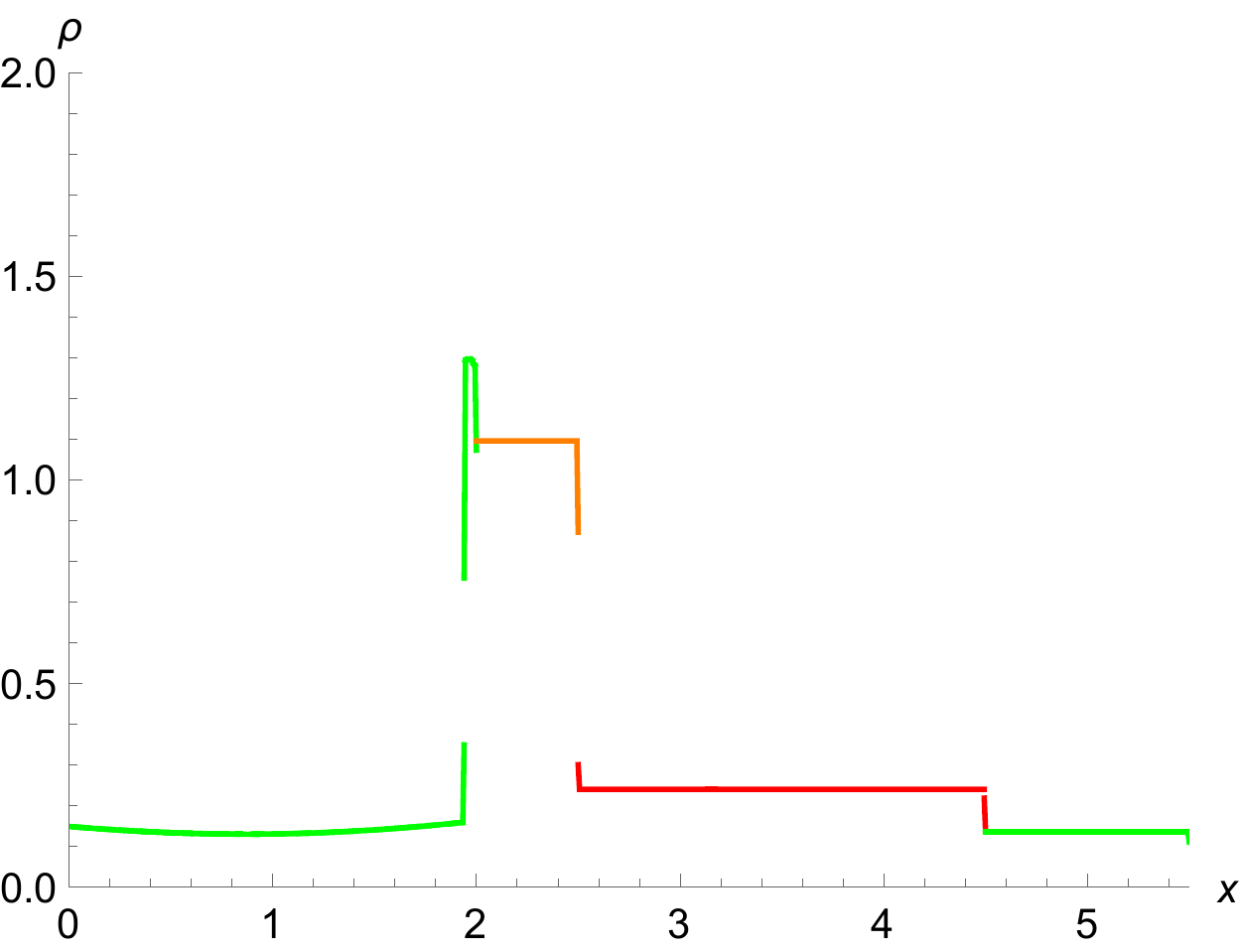}}
\hspace{5pt}
\subfloat[$t=17$.]{\includegraphics[height=1.59in]{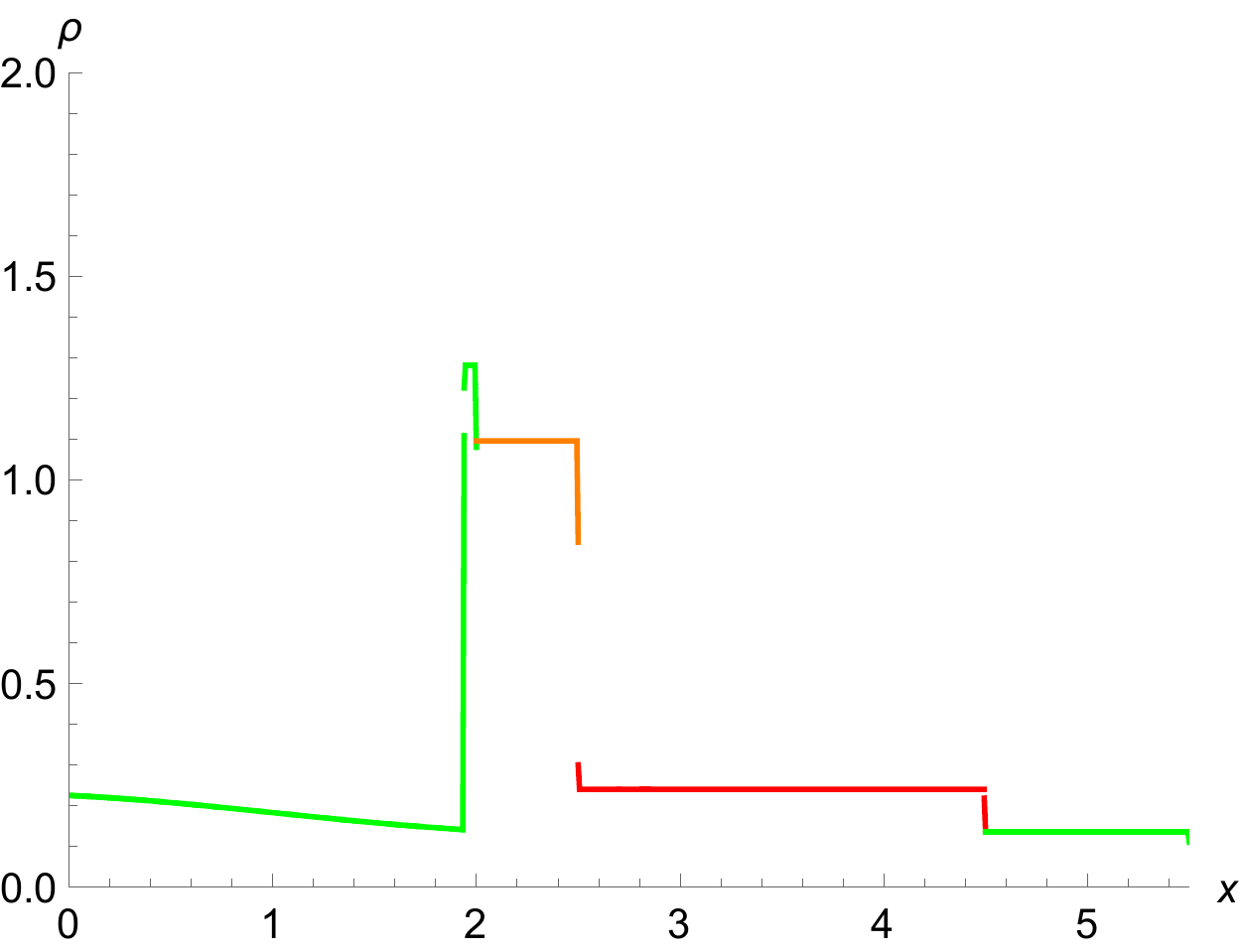}}
\hspace{5pt}
\subfloat[$t=19$.]{\label{obr_Bottleneck_result_19}\includegraphics[height=1.59in]{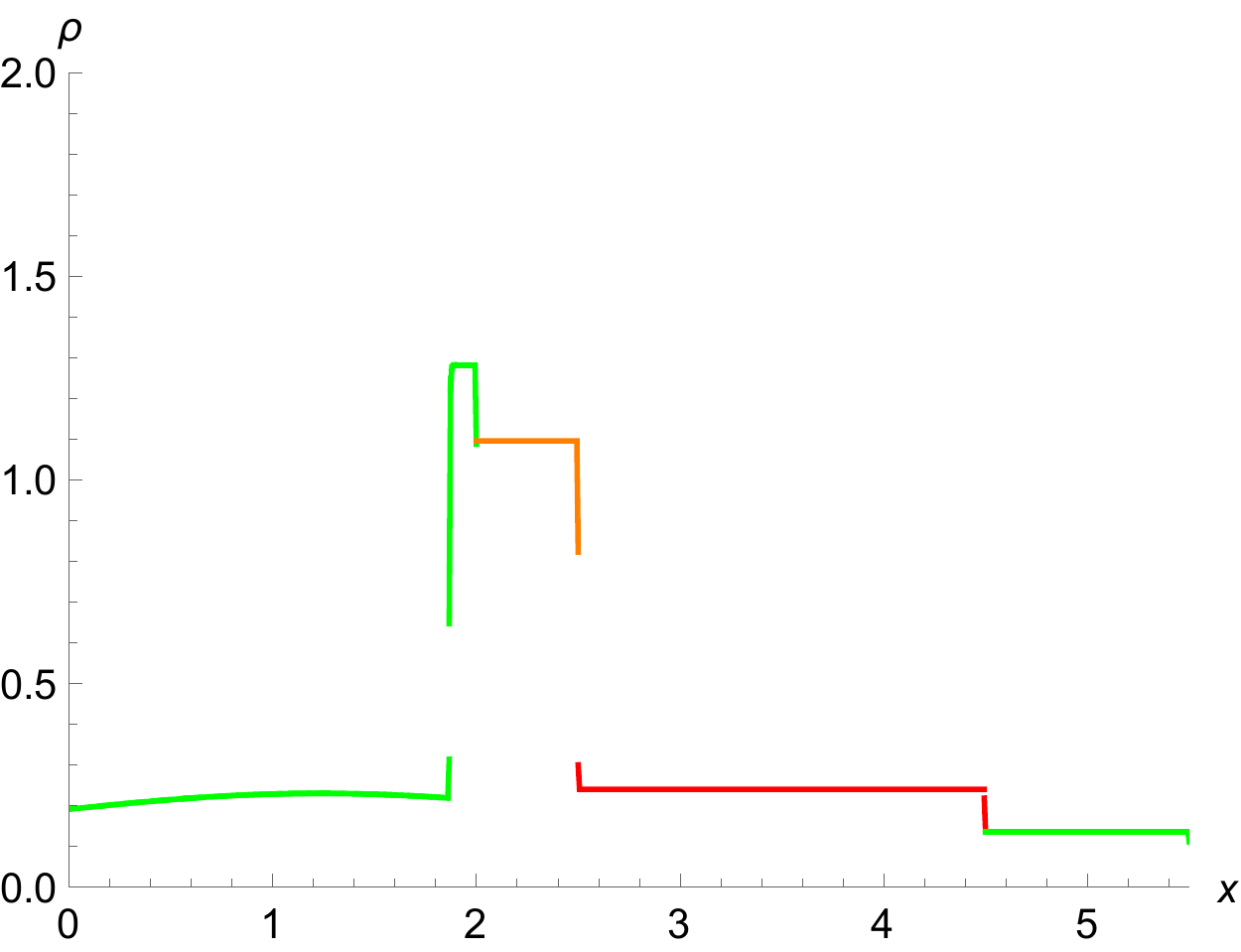}}
\caption{Bottleneck -- density on \textcolor{green}{Sector 1}, \textcolor{orange}{Sector 2}, \textcolor{red}{Sector 3} and \textcolor{green}{Sector 4}.}
\label{obr_Bottleneck_result}
\end{figure}

\begin{figure}[h]\centering
\includegraphics[height=1.78in]{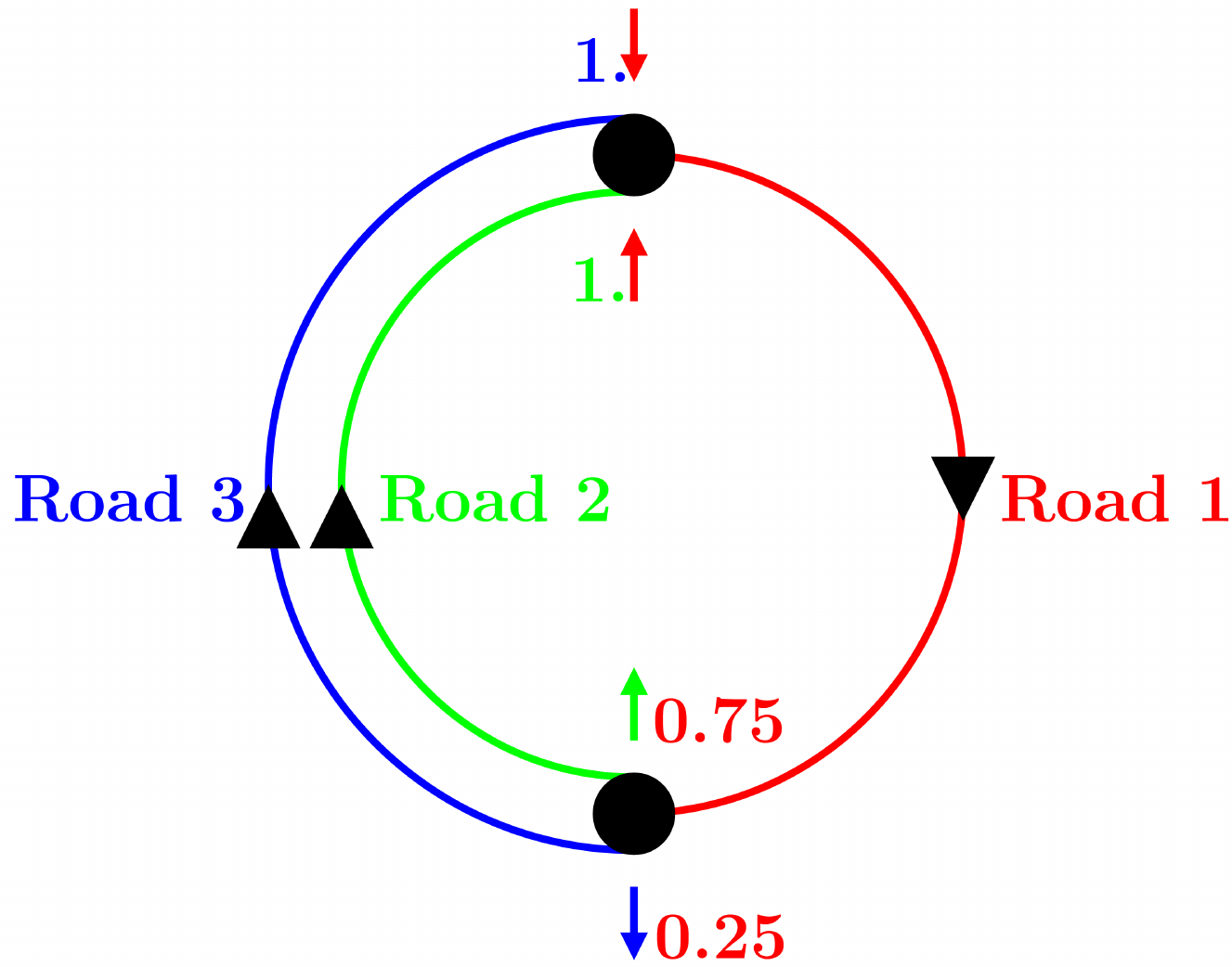}
\caption{Simple network.}
\label{obr_Network}
\end{figure}

\subsection{Simple network}\label{Subsection_simple_network}

Now we demonstrate how the method performs on networks. We define the simple network from Figure \ref{obr_Network}. This network is closed, so the total number of cars is conserved, by Theorem \ref{veta_properitoes_of_solution3}, since there is no inflow/outflow at artificial boundaries. We have three roads and two junctions. The length of all roads is $1$. At the first junction we have one incoming road and two outgoing roads. At the second junction we have the opposite situation. We consider a different distribution of cars to the two outgoing roads at the first junction: $\tfrac{3}{4}$ go from the first road to the second and $\tfrac{1}{4}$ from the first road to the third. This corresponds to the traffic distribution matrix $A_1=\T{[0.75, 0.25]}$. At the second junction, we simply take $A_2=[1,1]$. We note that $A_2$ does not satisfy the technical condition (C) from Section 5.1 in \cite{Networks}, cf. \cite[Remark 5.1.6]{Networks}, which ensures the existence of the fluxes from Definition \ref{def_network_solution}. Thus unlike the presented numerical scheme, the approach from \cite{Networks}, \cite{RKDG} cannot compute this example as is, but needs to introduce additional parameters into the problem. 

We define different initial conditions for each road. The initial condition for the first road is defined by
\begin{align*}
\rho_0(x)=
\begin{cases}
5x-1.5,\qquad &x\in[0.3,0.5],\\
-5x+3.5,\qquad &x\in[0.5,0.7],\\
0,\qquad &\text{otherwise},
\end{cases}
\end{align*}
which is a piecewise linear `bump'. The second and third road has a constant density of $0.4$, cf. Figure \ref{obr_Network_result_0}. The total number of cars in the whole network is $1$. We use the Greenshields model on all roads. The step size is $\tau=10^{-4}$ and the number of elements is $N=100$ on each road.

\begin{figure}[t!]\centering
\subfloat[$t=0$.]{\label{obr_Network_result_0}
\includegraphics[height=1.17in]{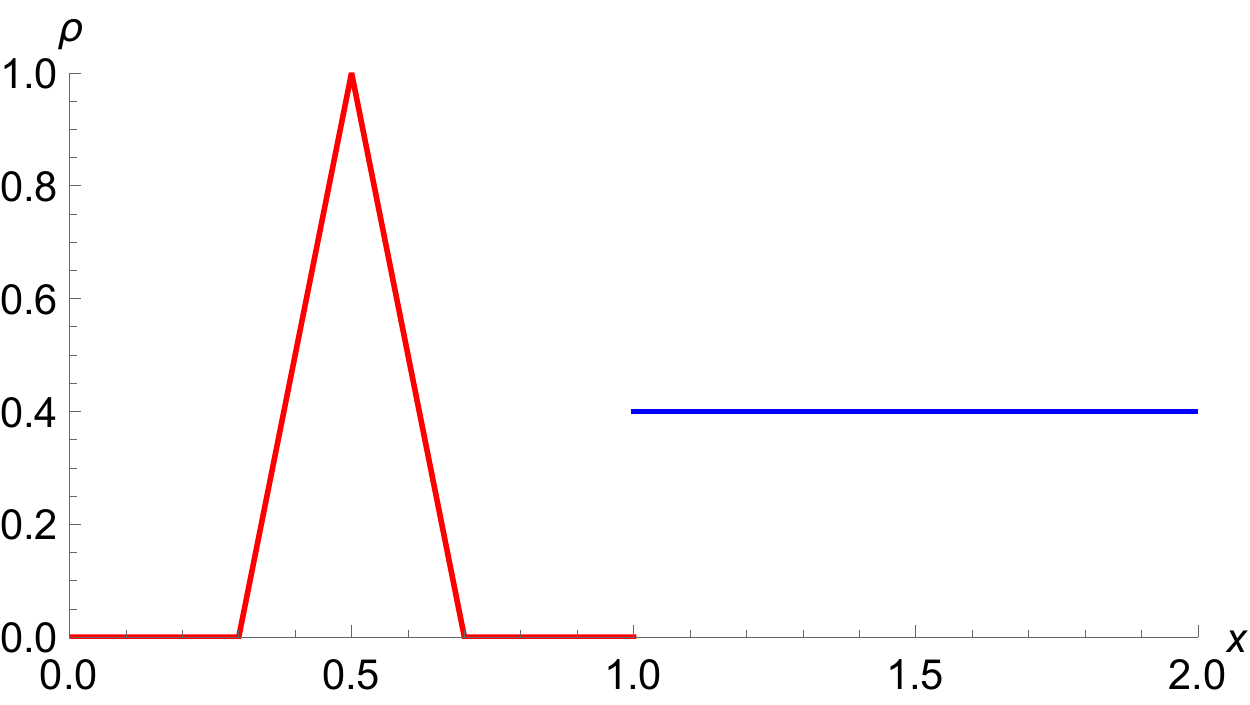}}
\hspace{5pt}
\subfloat[$t=0.2$.]{\includegraphics[height=1.17in]{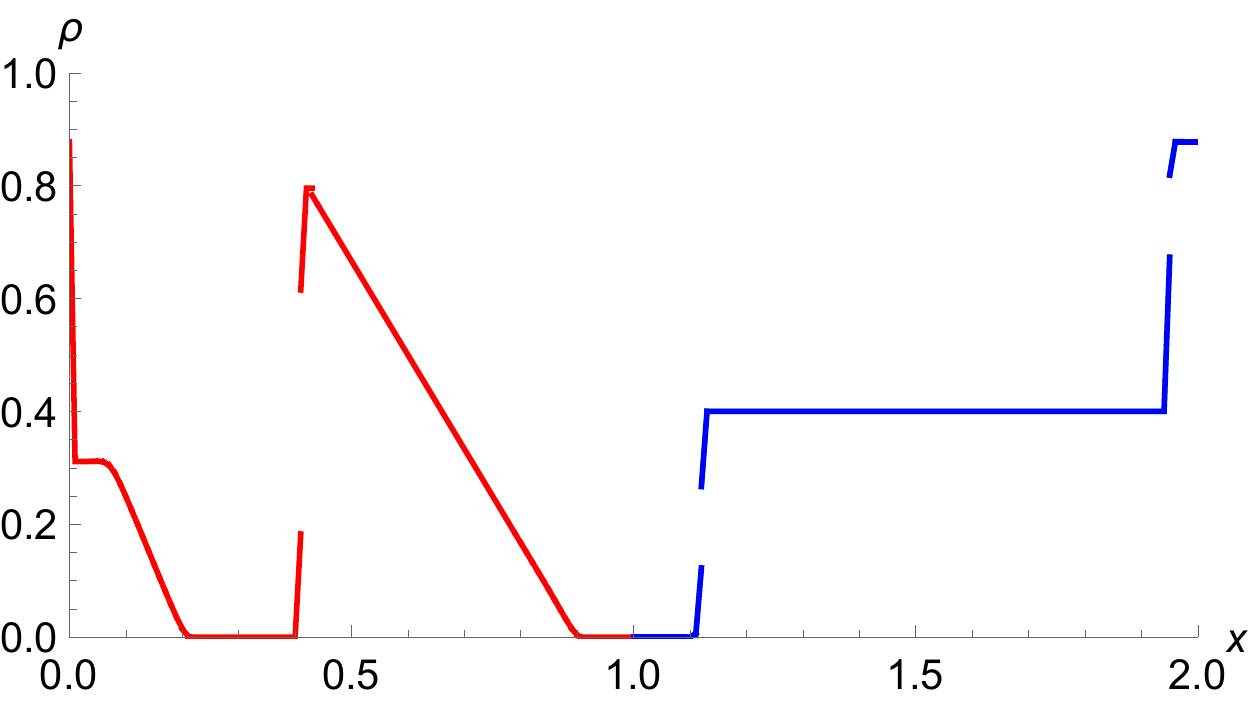}}
\hspace{5pt}
\subfloat[$t=0.4$.]{\includegraphics[height=1.17in]{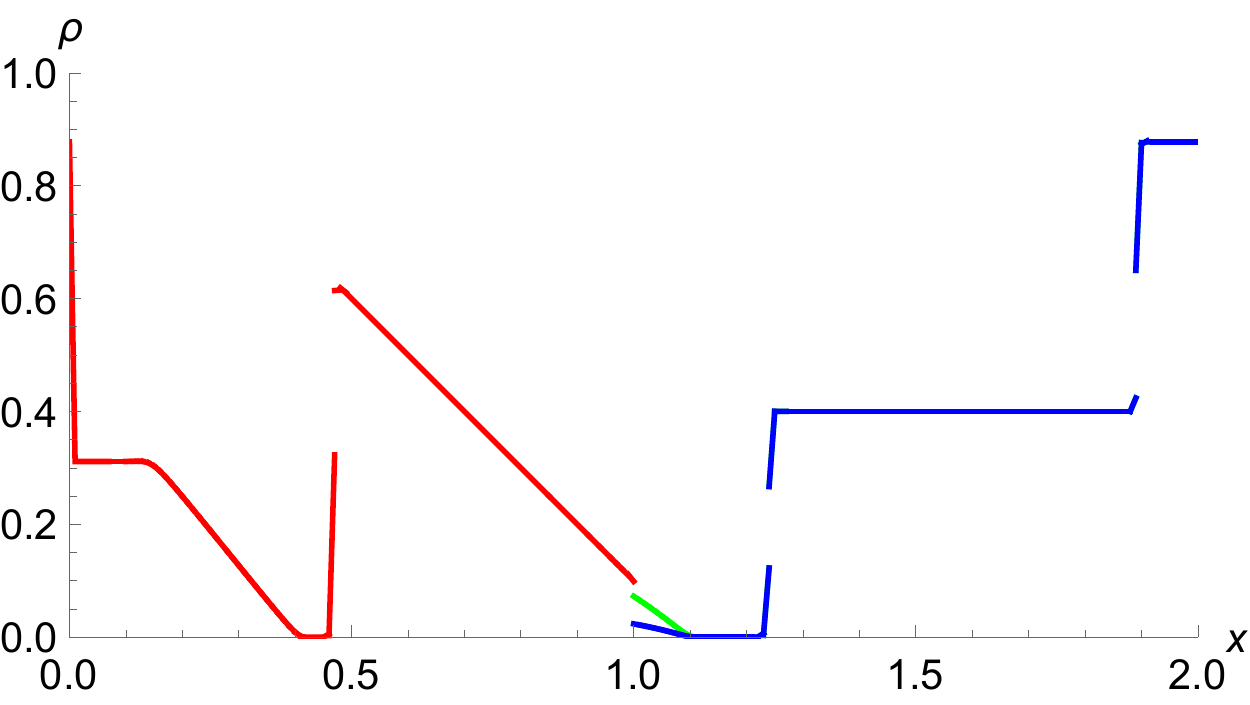}}
\hspace{5pt}
\subfloat[$t=0.6$.]{\includegraphics[height=1.17in]{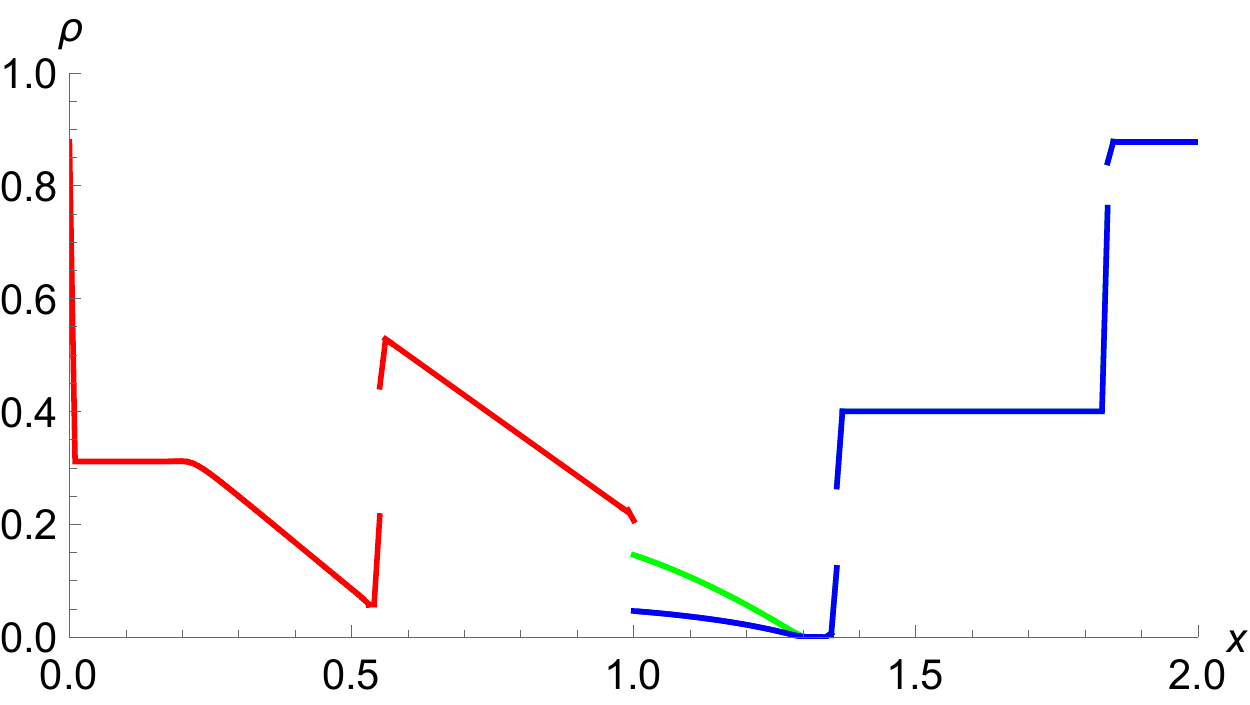}}
\hspace{5pt}
\subfloat[$t=0.8$.]{\includegraphics[height=1.17in]{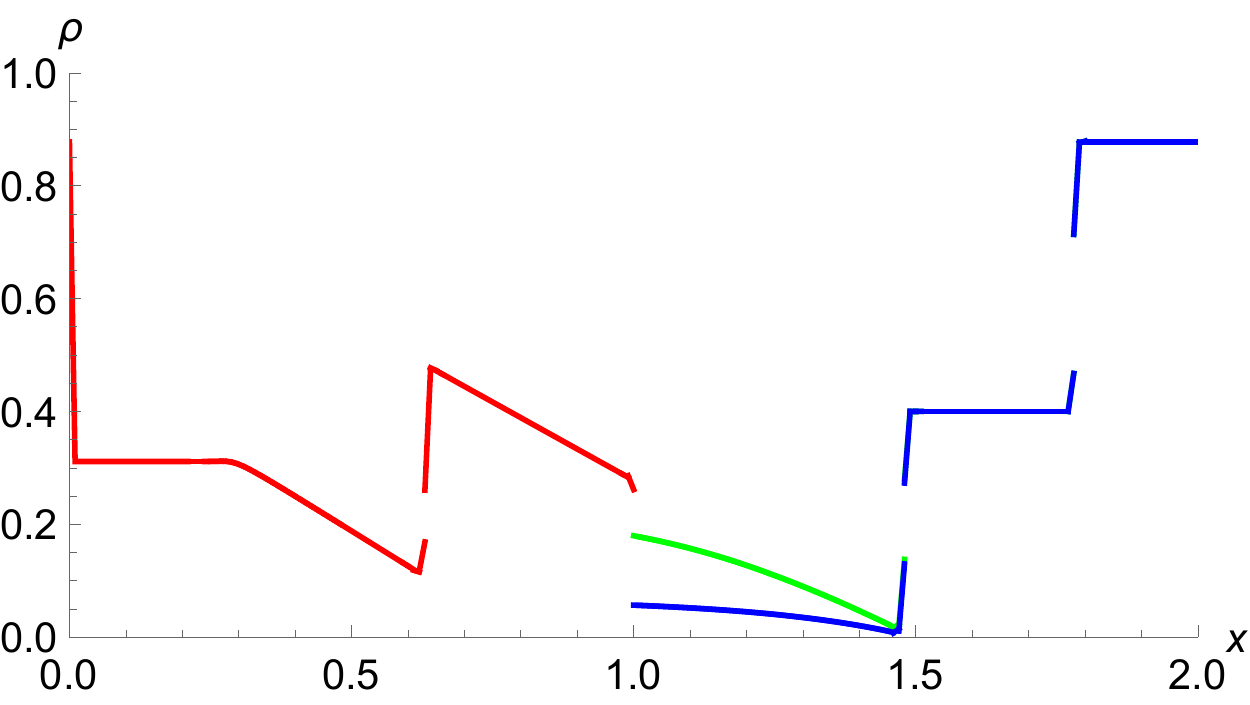}}
\hspace{5pt}
\subfloat[$t=1$.]{\includegraphics[height=1.17in]{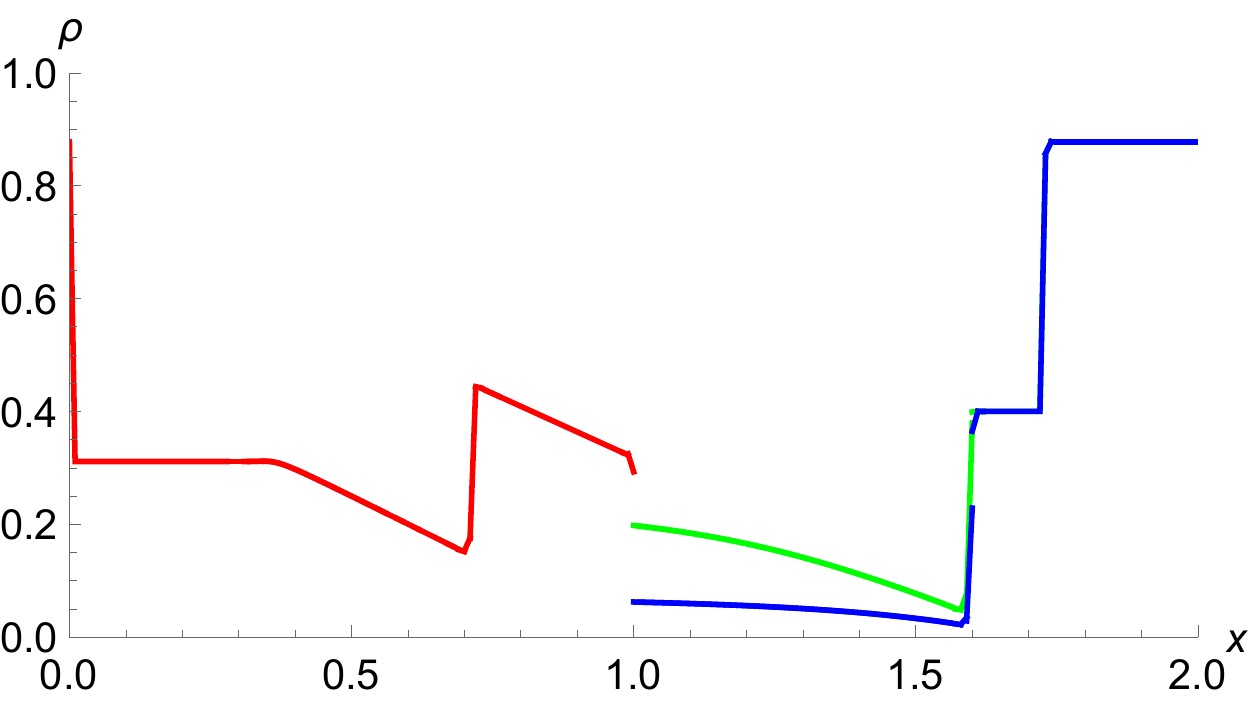}}
\hspace{5pt}
\subfloat[$t=1.5$.]{\label{obr_Network_result_6}\includegraphics[height=1.17in]{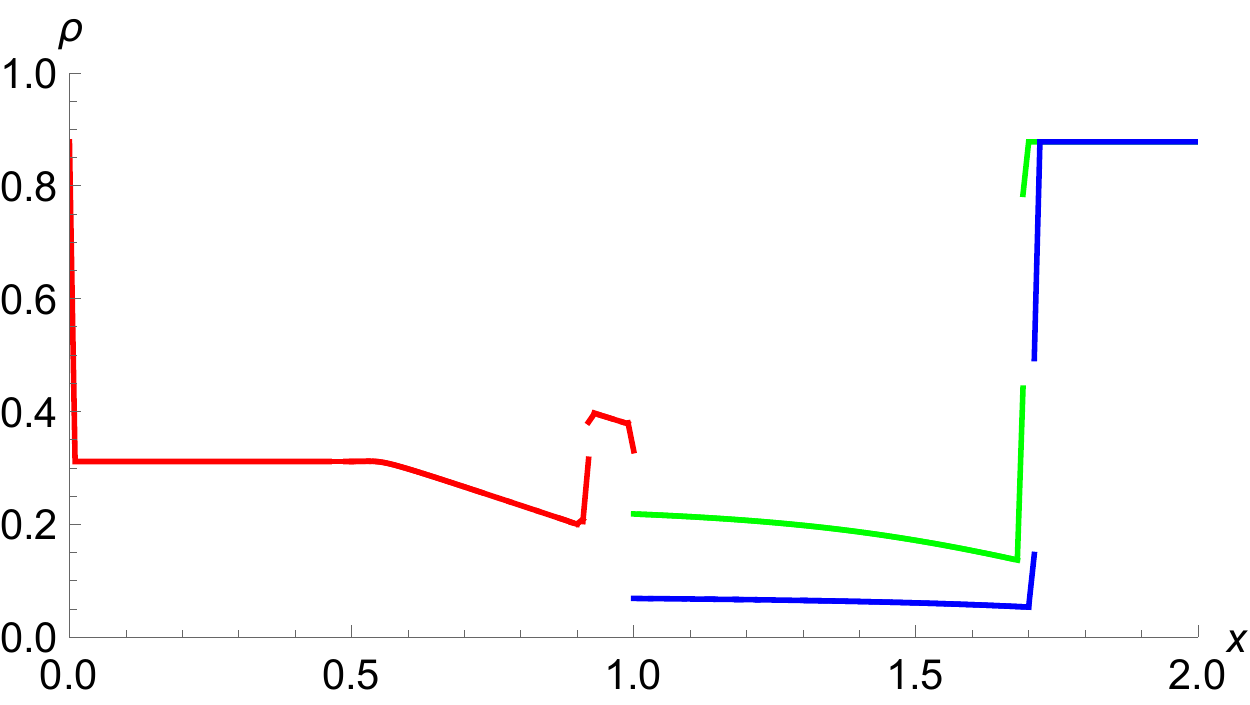}}
\hspace{5pt}
\subfloat[$t=2$.]{\label{obr_Network_result_7}\includegraphics[height=1.17in]{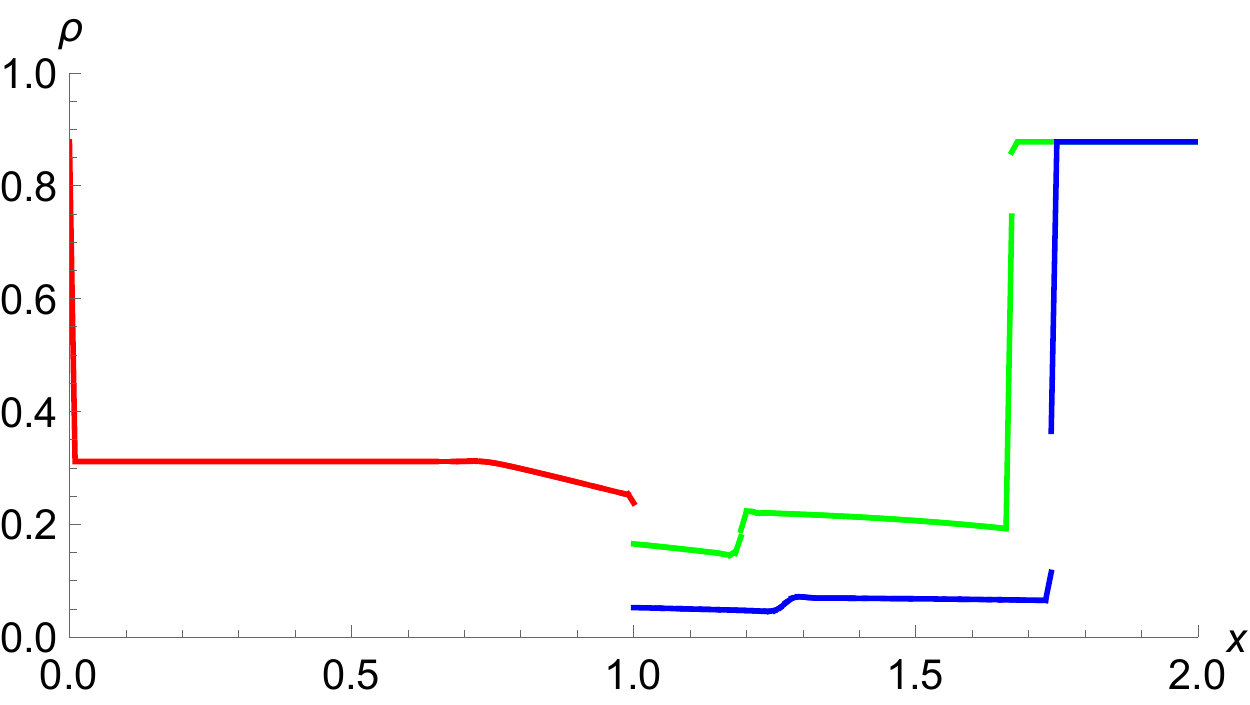}}
\hspace{5pt}
\subfloat[$t=3$.]{\label{obr_Network_result_8}\includegraphics[height=1.17in]{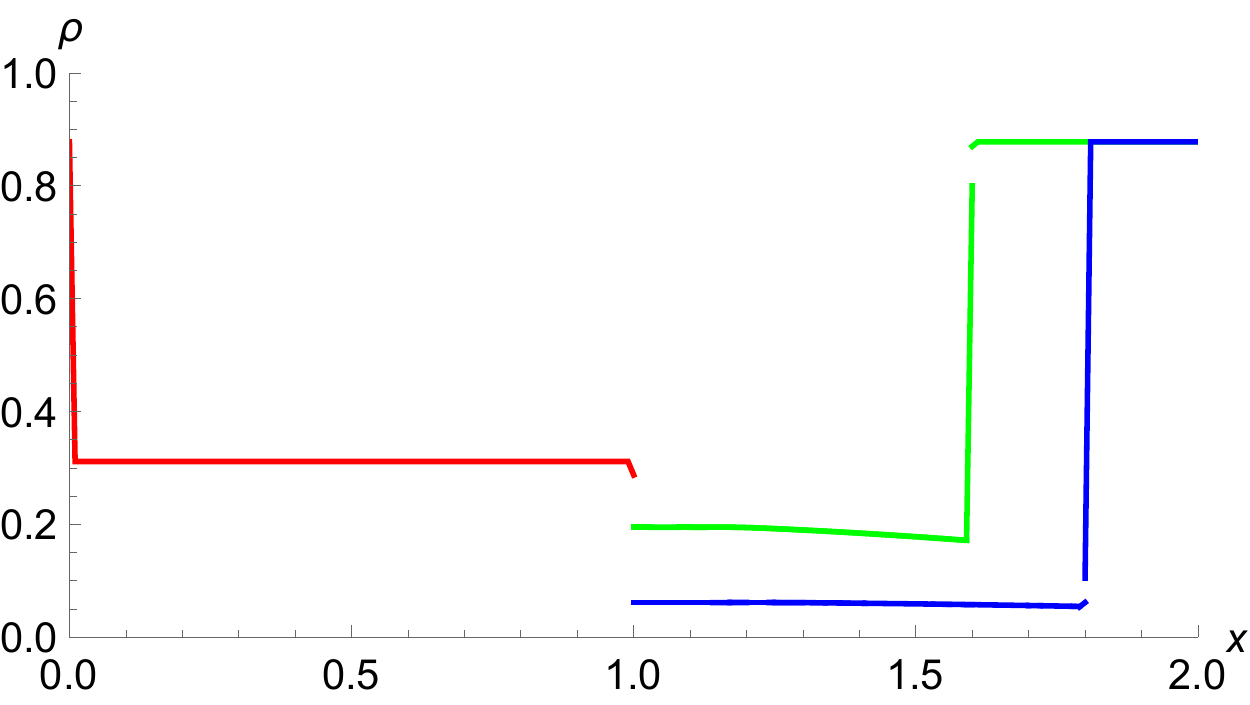}}
\caption{Network with \textcolor{red}{Road 1}, \textcolor{green}{Road 2} and \textcolor{blue}{Road 3}.}
\label{obr_Network_result}
\end{figure}

We can see the results in Figure \ref{obr_Network_result}. Road 1 distributes the traffic density between the other roads. We have too many cars at the second junction, where we have two incoming roads. Thus, we create a traffic congestion on Road 2 and Road 3. We can observe the transport and distribution of the bump from the first road through the first junction in Figures \ref{obr_Network_result_6} and \ref{obr_Network_result_7}. The result converges to a stationary solution. The traffic density in Figure \ref{obr_Network_result_8} is close to the stationary solution. The total amount of cars is conserved.

We have tested the method on much larger networks, where we are not limited by the number of incoming or outgoing roads at junctions. However in such cases the visualization of the results using density plots on individual roads, as in this paper, is impractical and confusing. For this purpose, other means of visualization must be implemented, such as maps of the network with individual roads colored by density magnitude using a suitable color palette. This remains for future work.

\subsection{Simple network -- comparison with the maximum possible fluxes}\label{sec_num_exp_comparison}
We consider the same network as in Section \ref{Subsection_simple_network} with different initial conditions:
\begin{align*}
\rho_{0,1}(x)&=
\begin{cases}
0,&x\in[0,0.5],\\
1,&x\in[0.5,1],
\end{cases}
\qquad
\rho_{0,2}(x)=
\begin{cases}
5x-1.5,&x\in[0.3,0.5],\\
-5x+3.5,&x\in[0.5,0.7],\\
0,&\text{otherwise},
\end{cases}
\\
\rho_{0,3}(x)&=
\begin{cases}
1,&x\in[0,0.5],\\
0,&x\in[0.5,1],
\end{cases}
\end{align*}
where $\rho_{0,i}$ is the initial condition on road number $i$.

\begin{figure}\centering
\subfloat[$t=0$.]{\includegraphics[height=1.1in]{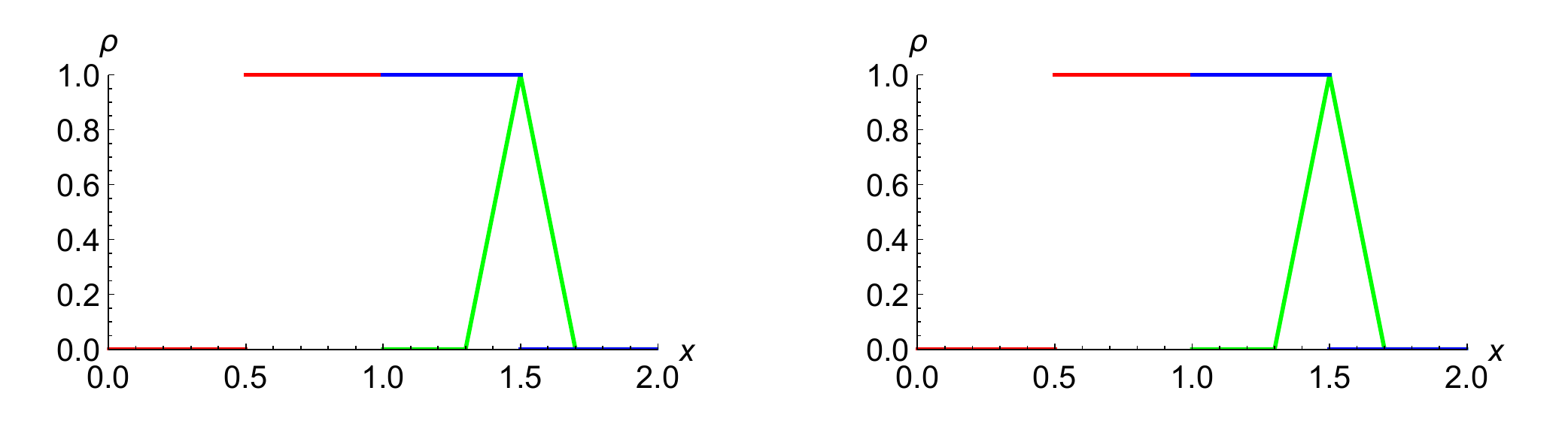}}
\hspace{0pt}
\subfloat[$t=0.25$.]{\label{obr_Network_Comparison_result_1}\includegraphics[height=1.1in]{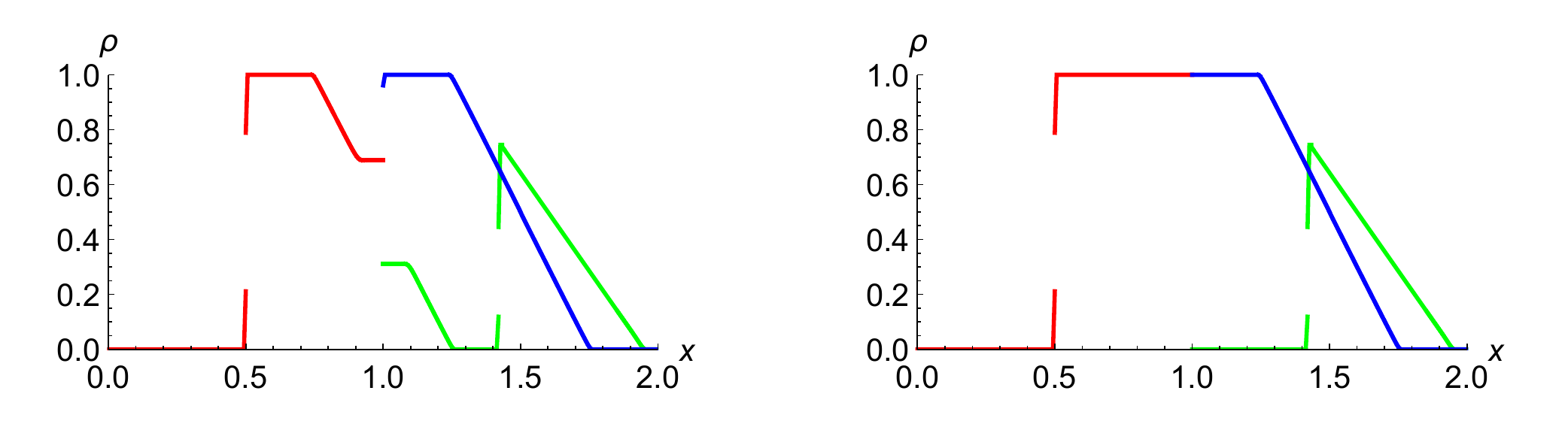}}
\hspace{0pt}
\subfloat[$t=0.5$.]{\label{obr_Network_Comparison_result_2}\includegraphics[height=1.1in]{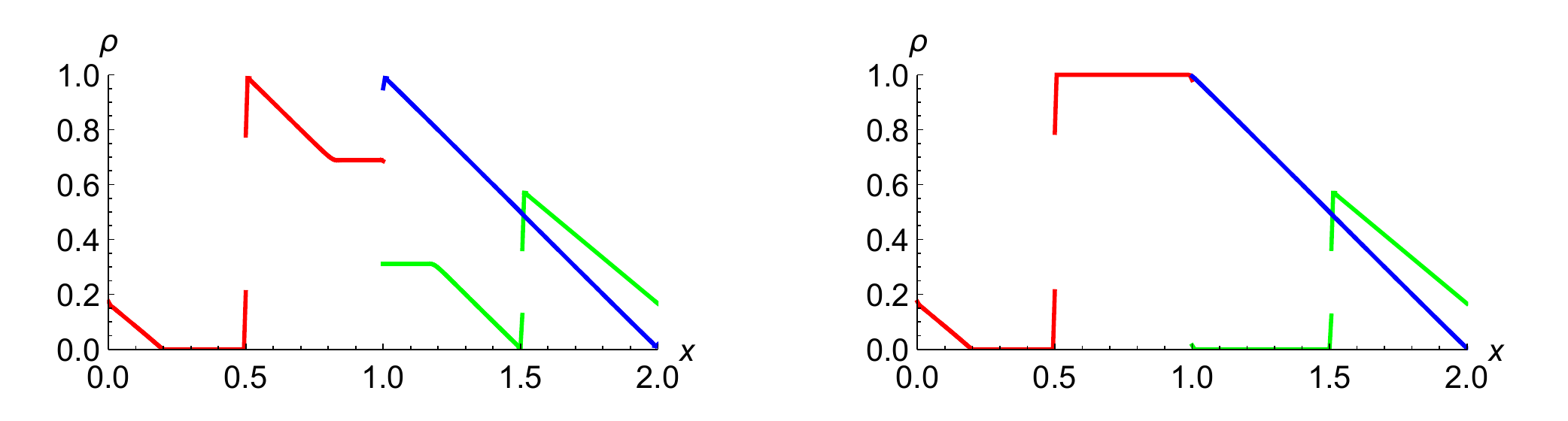}}
\hspace{0pt}
\subfloat[$t=0.75$.]{\label{obr_Network_Comparison_result_3}\includegraphics[height=1.1in]{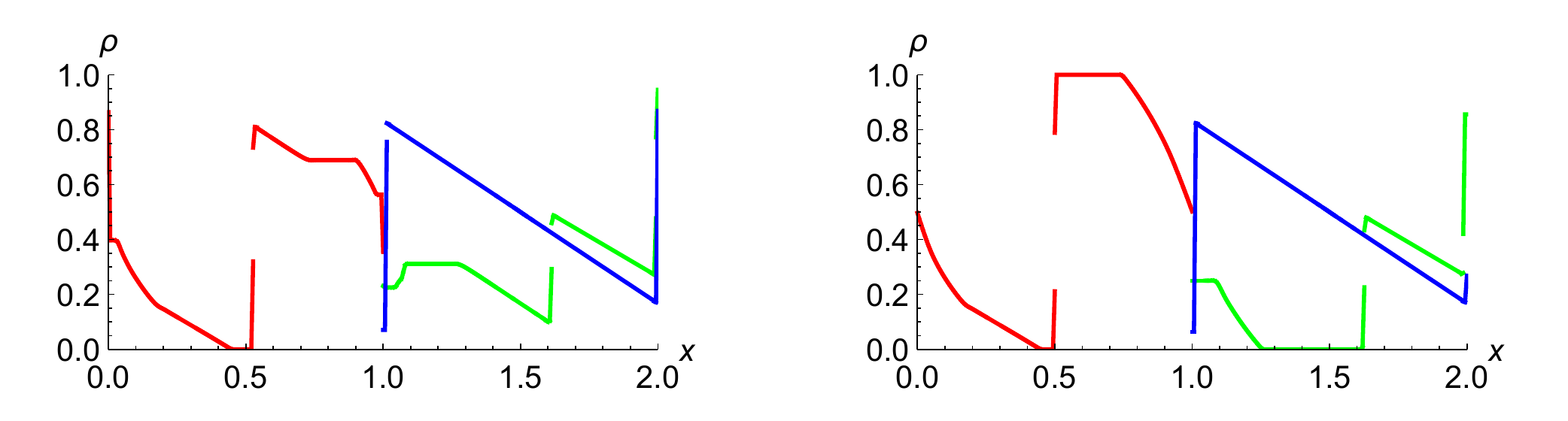}}
\hspace{0pt}
\subfloat[$t=1$.]{\label{obr_Network_Comparison_result_4}\includegraphics[height=1.1in]{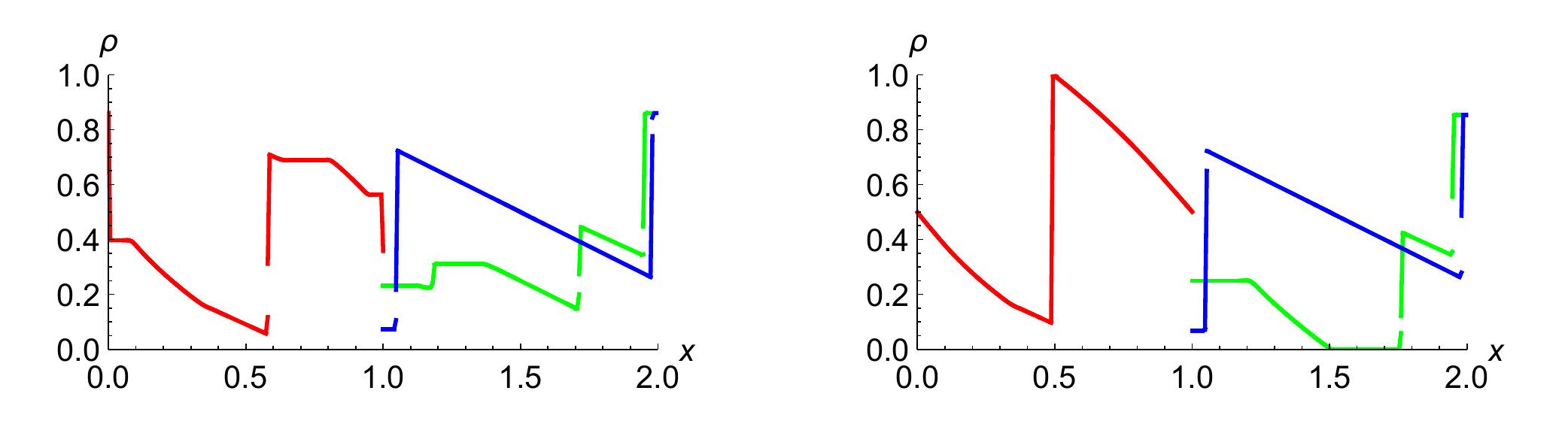}}
\hspace{0pt}
\subfloat[$t=20$.]{\label{obr_Network_Comparison_result_5}\includegraphics[height=1.1in]{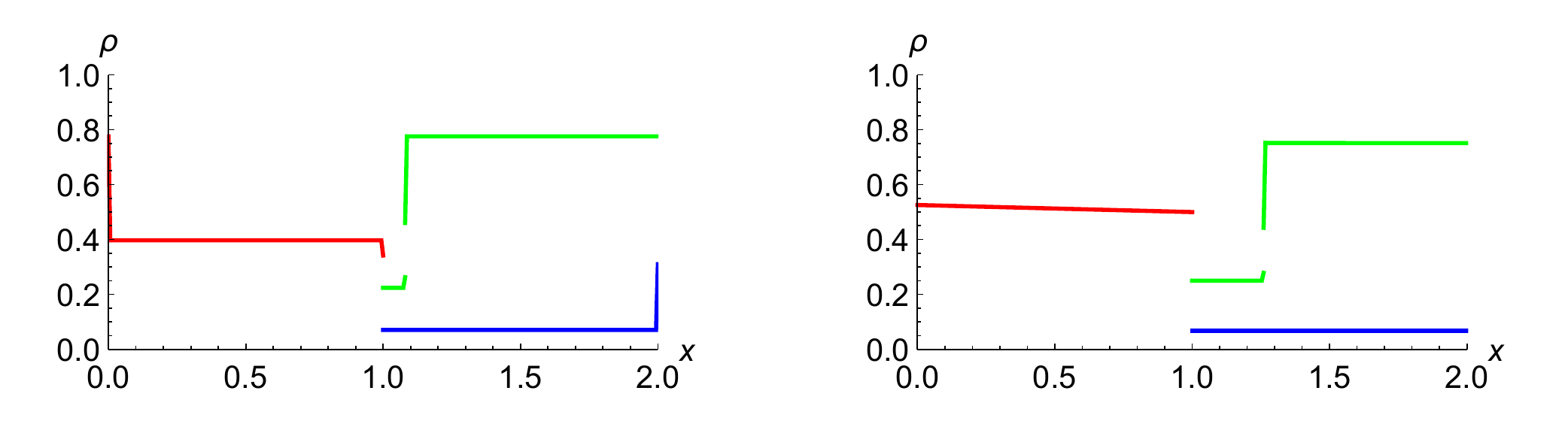}}
\caption{Comparison of network with \textcolor{red}{Road 1}, \textcolor{green}{Road 2} and \textcolor{blue}{Road 3}. Left column -- numerical flux from Section \ref{Subsection_Fluxes_junctions}. Right column -- maximal flux from Definition \ref{def_network_solution}.}
\label{obr_Network_Comparison_result}
\end{figure}

We compare our approach with that of \cite{RKDG} which uses the maximum possible flux from Definition \ref{def_network_solution}. In both approaches we use the Lax--Friedrichs flux and the explicit Euler method. A right of way parameter $q$ must be prescribed for the junction with two incoming roads in the case of the maximum possible flux. We use $q=0.5$, so the roads are equal. In our approach, we do not have a defined right of way, so the roads are equal as well.

We can see the comparison in Figure \ref{obr_Network_Comparison_result}. Our approach is in the left column while the approach using the maximum possible flux is in the right column. We point out the different behavior in both junctions.

First, we notice the first junction with one incoming and two outgoing roads, i.e. $x=1$ in the figures. As we mention in Section \ref{Subsection_Fluxes_junctions}, the maximum possible flux through the junction at the time $t\in[0,0.5]$ is zero because one of the outgoing roads (Road 3) reaches the maximal traffic density, cf. Figure \ref{obr_Network_Comparison_result_1} and \ref{obr_Network_Comparison_result_2}. Our approach has nonzero traffic flow through this junction at the time $t\in[0,0.5]$ because the numerical flux is nonzero between Road 1 and Road 2 allowing the cars to go from Road 1 to Road 2. For times $t>0.5$, the maximal traffic density is not attained on Road 3 and the  traffic flow is nonzero through the junction in both cases, cf. Figure \ref{obr_Network_Comparison_result_3}, \ref{obr_Network_Comparison_result_4} and \ref{obr_Network_Comparison_result_5}. If we compare both approaches, we see completely different results on Roads 1 and 2 while the results on Road 3 are almost identical.

Now we focus on the second junction with two incoming and one outgoing road, i.e. $x=0$ and $x=2$ in the figures. At first glance, there is no difference between the two approaches. Let's compare $\rho_1^{(R)}(0,1)$, i.e. the limit from the right of traffic density on the outgoing Road 1 at $x=0$ and $t=1$. Our approach gives us $\rho_1^{(R)}(0,1)\approx 0.4$ while the approach using the maximum possible flux gives us $\rho_1^{(R)}(0,1)\approx 0.5$, which is the maximal traffic flow. The reason for this difference is that we do not have a defined right of way in our approach. Road 2 and Road 3 push too many cars into the junction congesting it slightly. The approach using the maximum possible flux takes into account the whole situation and selects the best solution for both roads. From a real point of view, this approach could be viewed as simulating the behavior of communicating autonomous vehicles which optimize the traffic situation globally, while our approach could be interpreted as simulating the behavior of human drivers without the right of way.

Both approaches converge to stationary solutions which are not identical, see Figure \ref{obr_Network_Comparison_result_5}.

We would like to implement right of way into our approach and introduce it in future work.

\subsection{Traffic lights}
Finally, we apply the presented method to traffic on a junction with traffic lights. The advantage of our approach is that we are not strictly forced to use only full green or red for all outgoing roads, as discussed in Section \ref{sec_DG_Networks}. Our traffic flow at the junction allows us to choose from a large variety of traffic light combinations.

We define a junction with 4 incoming and 4 outgoing roads, see Figure \ref{obr_Lights}. The outgoing roads turn back and return to the junction. Roads 1 and 2 are the main roads. The maximal density on the main roads is $\rho_{\max,m}=2$ and the length is $L_m=0.5$. The initial condition for the main roads is defined as $\rho_{0,m}(x)=1.3$. Roads 3 and 4 are the side roads. The maximal density is $\rho_{\max,s}=1$ and the length is $L_s=0.4$. The initial condition for the side roads is defined as $\rho_{0,s}(x)=0.2$.

At the junction we use the traffic distribution matrix
\[
A=\begin{bmatrix}
0 &\ 0.75 &\ 0.4 &\ 0.45\\
0.8 &\ 0 &\ 0.5 &\ 0.4 \\
0.1 &\ 0.15 &\ 0 &\ 0.15 \\
0.1 &\ 0.1 &\ 0.1 &\ 0
\end{bmatrix}.
\]
We define three phases of traffic lights. In the first phase, traffic lights allow vehicles from Road 1 to drive to Road 2 or Road 3 and vehicles from Road 2 to drive to Road 1 or Road 4. The first phase lasts for $t_1=1$. In the second phase, traffic lights allow vehicles from Road 1 to drive to Road 4, vehicles from Road 2 to drive to Road 3, vehicles from Road 3 to drive to Road 2 and vehicles from Road 4 to drive to Road 1.  In the third phase, the traffic lights on Road 3 and Road 4 have full green signal. The second and third phase lasts for $t_2=0.5$. After each phase there are all red lights and this situation lasts for $t_r=0.05$. All three phases are periodically alternating.

The maximal velocity on each roads is $v_{\max}=0.5$. The maximal density at the junction is $\rho_{\max,j}=2$. For side roads, in order to accommodate the different maximal densities at the junction and side roads, we linearly interpolate the maximal density on the first and last elements of both side roads. We use Greenshields model. The time--step size is $\tau=10^{-4}$ and the length of each element is $h=\tfrac{1}{150}$.

We can see the results in Figure \ref{obr_Krizovatka_result}. The first phase is in Figures \ref{obr_Krizovatka_result_1} and \ref{obr_Krizovatka_result_2}. The second phase is in Figure \ref{obr_Krizovatka_result_3}. The third phase is in Figure \ref{obr_Krizovatka_result_5}. There are red lights on each road in Figure \ref{obr_Krizovatka_result_4}.

\label{num_exp_traffic_lights}
\begin{figure}[t!]\centering
\includegraphics[height=2.3in]{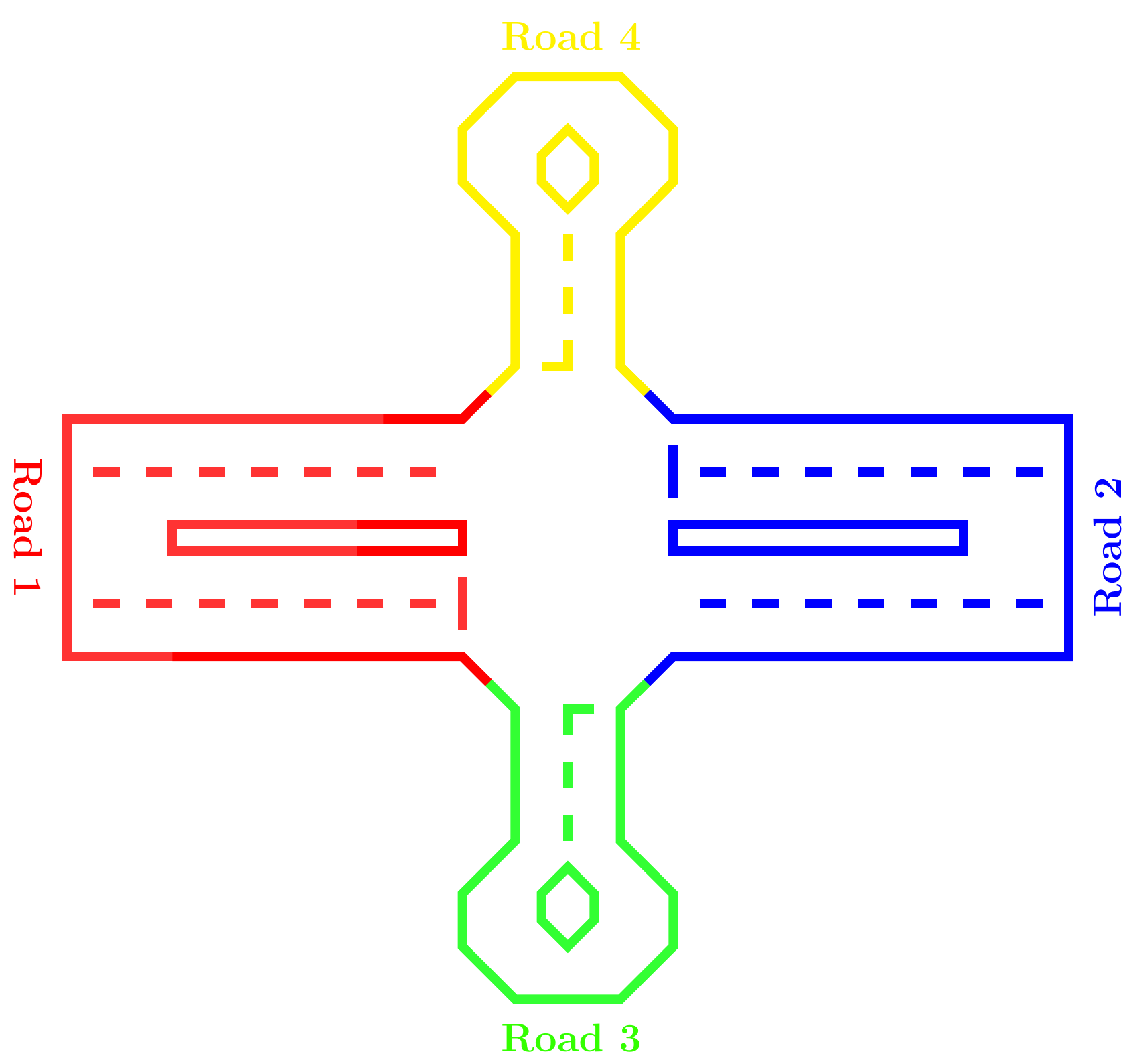}
\caption{Junction with traffic lights.}
\label{obr_Lights}
\end{figure}

\begin{figure}[t!]\centering
\subfloat[$t=0$.]{\includegraphics[height=1.99in]{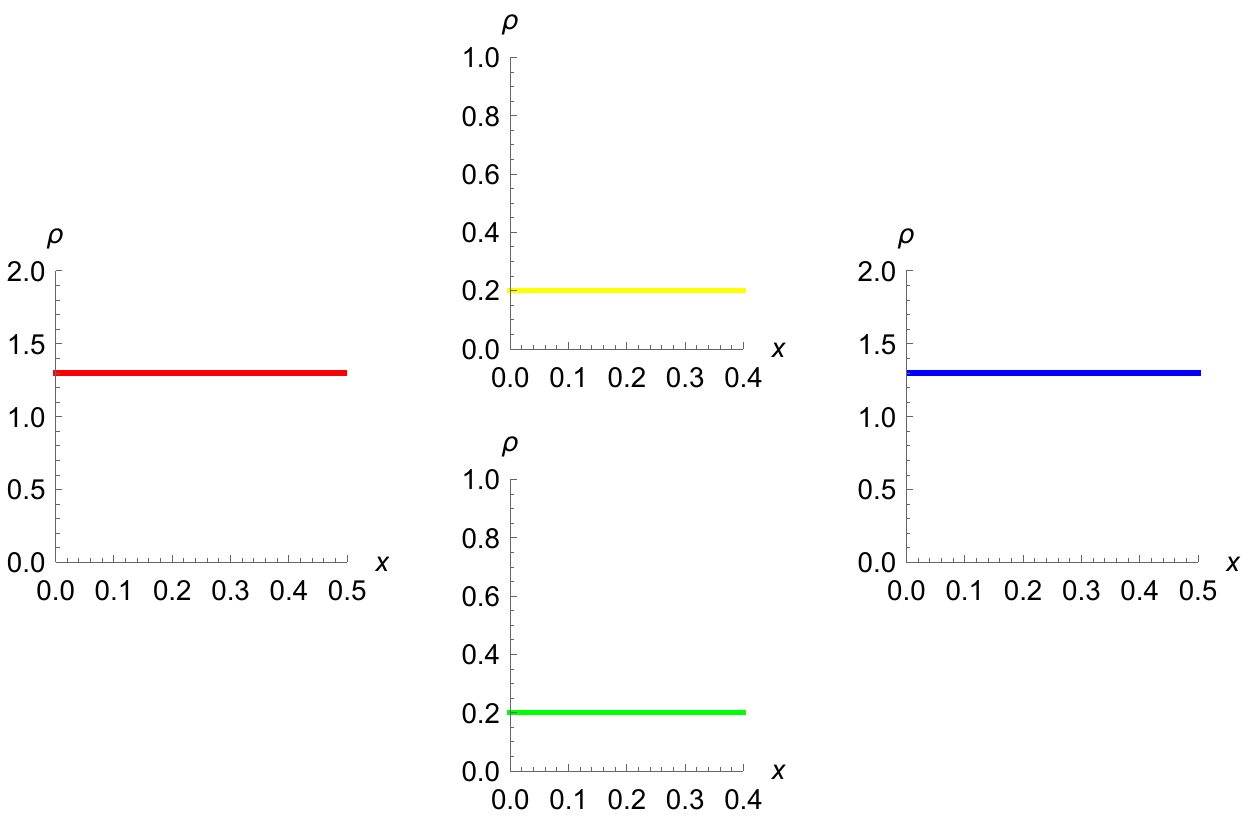}}
\hspace{5pt}
\subfloat[$t=0.4$.]{\label{obr_Krizovatka_result_1}\includegraphics[height=1.99in]{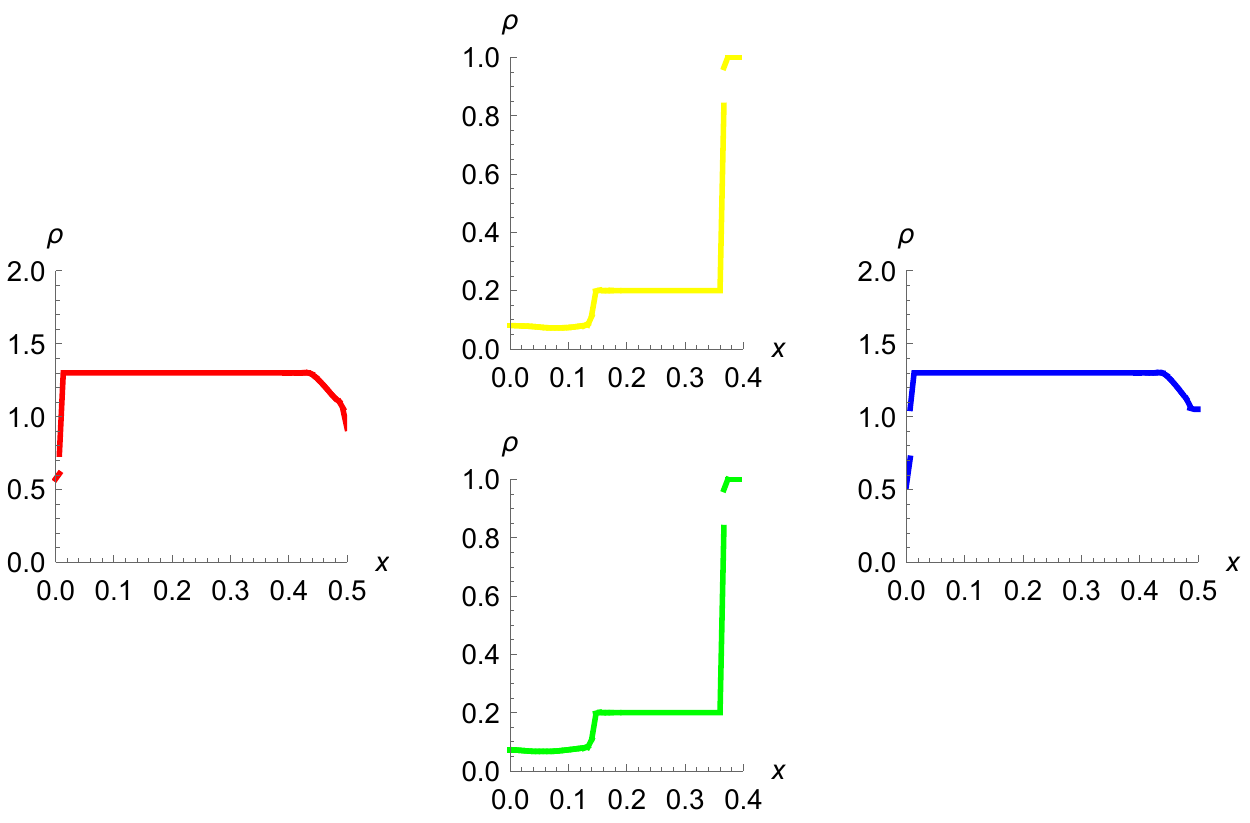}}
\hspace{5pt}
\subfloat[$t=0.8$.]{\label{obr_Krizovatka_result_2}\includegraphics[height=2.1in]{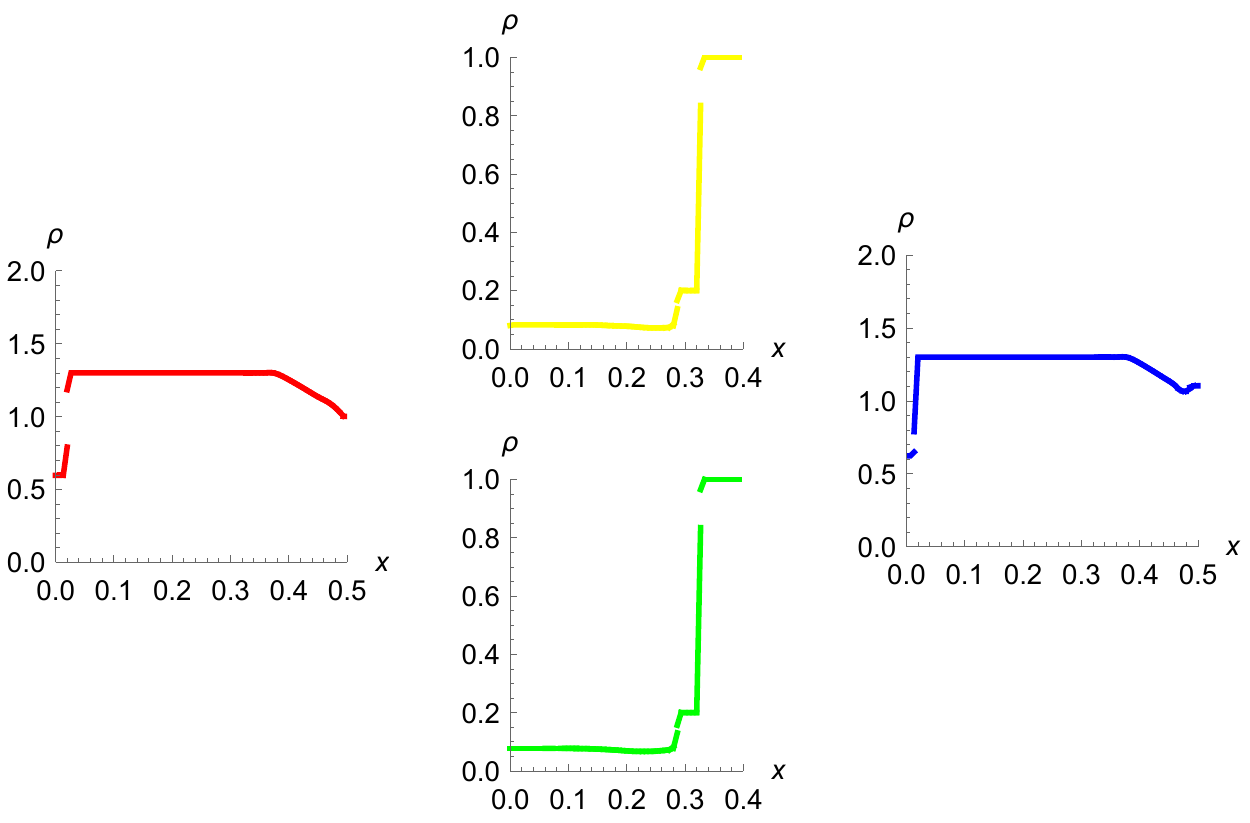}}
\hspace{5pt}
\subfloat[$t=1.2$.]{\label{obr_Krizovatka_result_3}\includegraphics[height=2.1in]{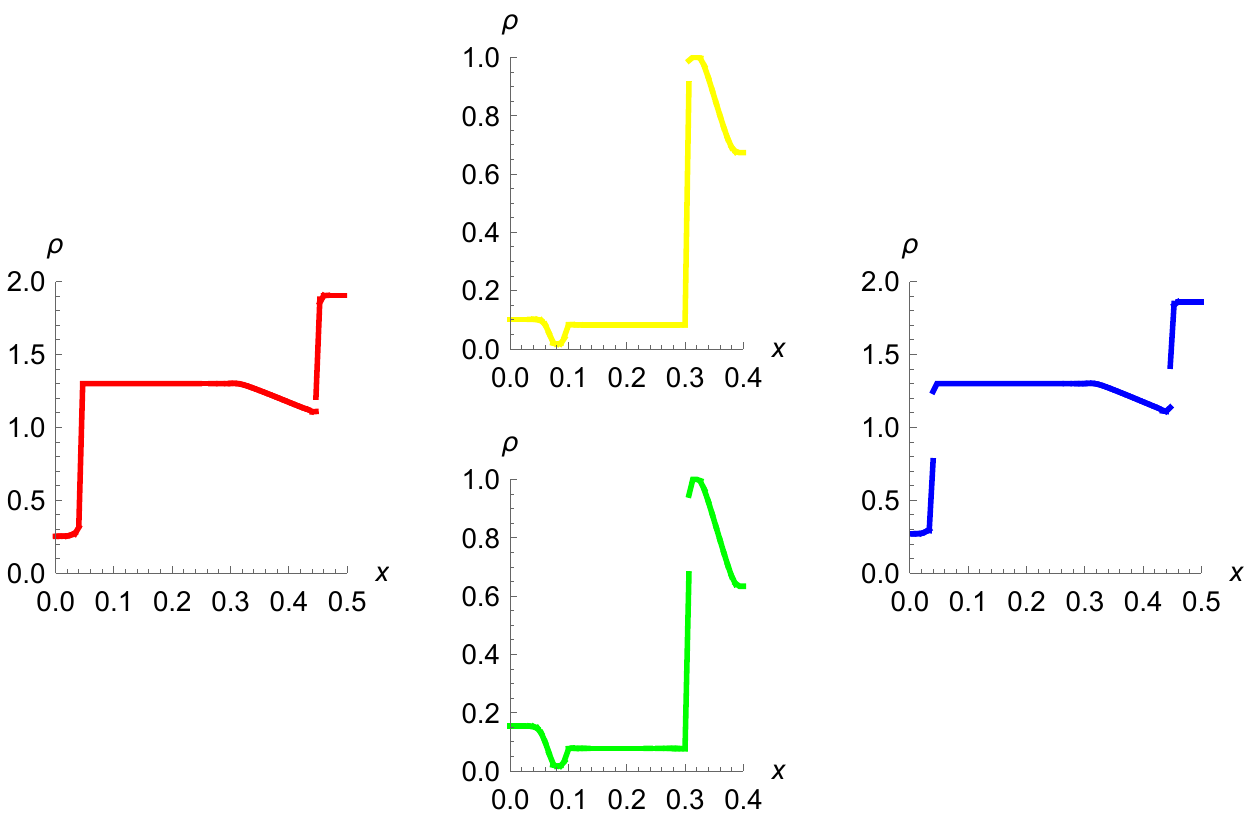}}
\hspace{5pt}
\subfloat[$t=1.6$.]{\label{obr_Krizovatka_result_4}\includegraphics[height=2.1in]{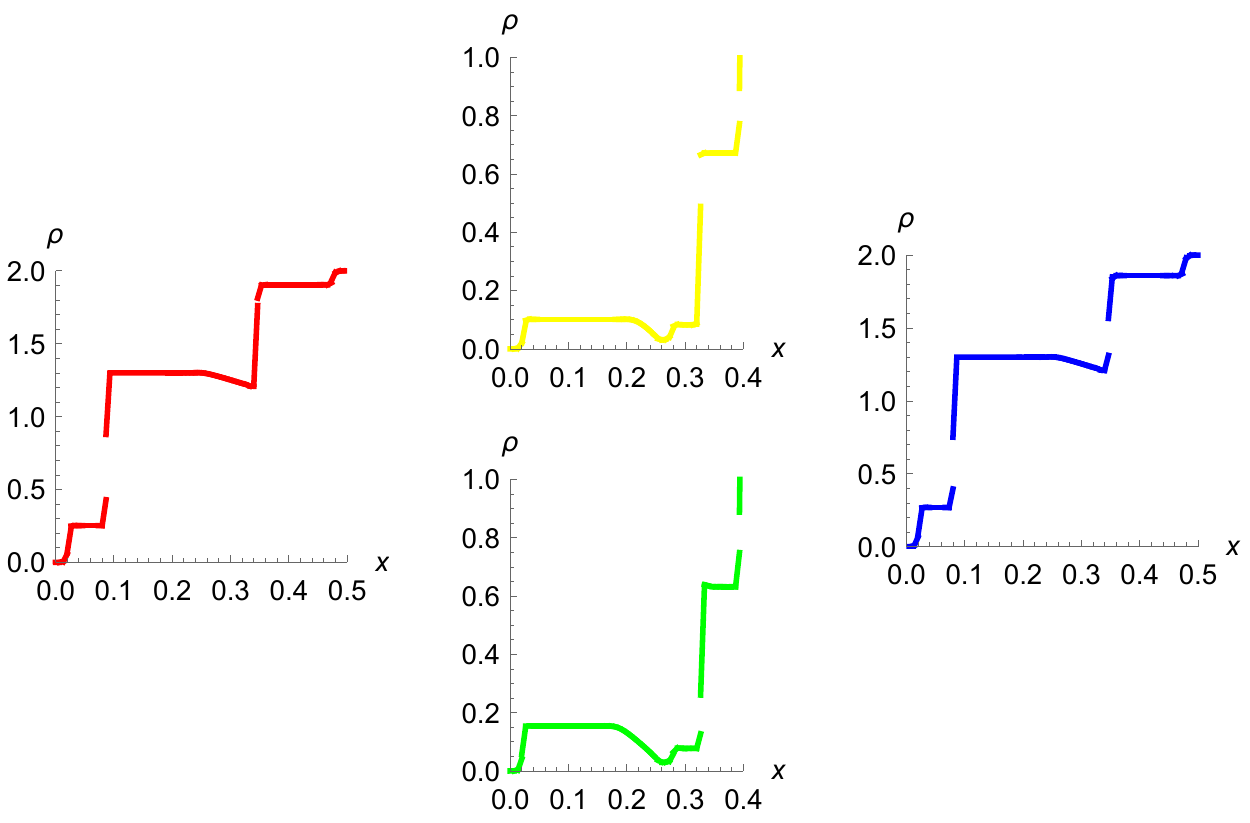}}
\hspace{5pt}
\subfloat[$t=2$.]{\label{obr_Krizovatka_result_5}\includegraphics[height=2.1in]{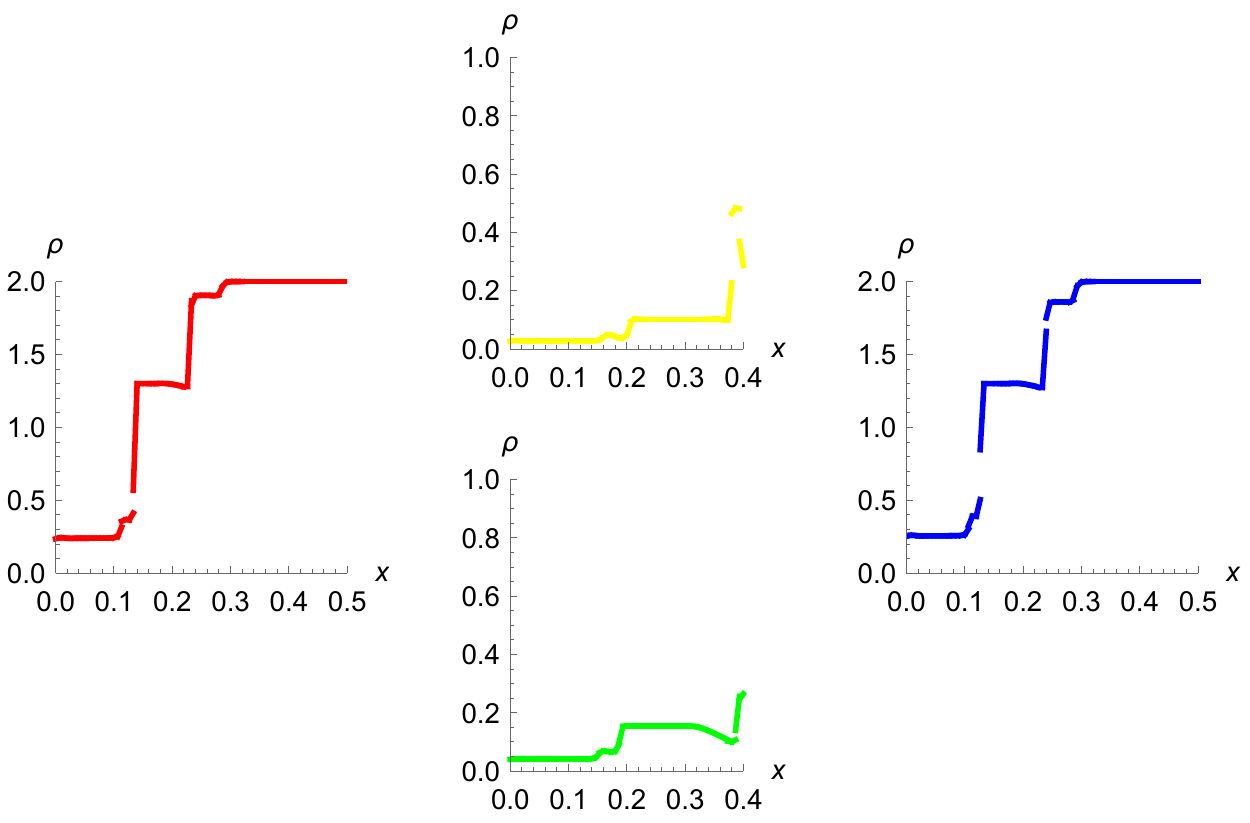}}
\caption{Junction with \textcolor{red}{Road 1}, \textcolor{blue}{Road 2}, \textcolor{green}{Road 3} and \textcolor{yellow}{Road 4}.}
\label{obr_Krizovatka_result}
\end{figure}

\section*{Conclusion}
We have demonstrated the numerical solution of macroscopic traffic flow models on networks using the discontinuous Galerkin method. On individual roads, we use the Lax-Friedrichs numerical flux, while on junctions, we construct a new numerical flux based on the preferences of the drivers. We compare our approach with the paper \cite{RKDG} by \v Cani\'c, Piccoli, Qiu and Ren, where Runge-Kutta methods are used along with a different choice of numerical fluxes at junctions. We discuss the differences between the two approaches, where that of \cite{RKDG} corresponds to single-lane roads with a strict enforcement of a priori traffic distribution, while the presented approach corresponds to having dedicated turning-lanes and/or flexibility of the drivers' preferences in extreme situation such as congestions. Moreover, the presented construction of the traffic flux at junctions allows the simulation of arbitrary traffic light combinations. In the future works, we would like to implement right of way rules (with regard to main and side roads) into the numerical flux and introduce true multi-lane roads with overtaking into the model.

\bibliographystyle{spmpsci}      
\bibliography{Vacek_bibliography}   

\begin{thebibliography}{10}
\providecommand{\url}[1]{{#1}}
\providecommand{\urlprefix}{URL }
\expandafter\ifx\csname urlstyle\endcsname\relax
  \providecommand{\doi}[1]{DOI~\discretionary{}{}{}#1}\else
  \providecommand{\doi}{DOI~\discretionary{}{}{}\begingroup
  \urlstyle{rm}\Url}\fi

\bibitem{RKDG}
Čanić, S., Piccoli, B., Qiu, J., Ren, T.: {R}unge-{K}utta {D}iscontinuous
  {G}alerkin {M}ethod for {T}raffic {F}low {M}odel on {N}etworks.
\newblock Journal of Scientific Computing \textbf{63}, 233--255 (2015)

\bibitem{Cockburn_Karniadakis_Shu}
Cockburn, B., Karniadakis, G.E., Shu, C.W. (eds.): {D}iscontinuous {G}alerkin
  {M}ethods, {T}heory, {C}omputation and {A}pplications, \emph{Lecture Notes in
  Computational Science and Engineering}, vol.~11. Springer-Verlag (2000)

\bibitem{Stabilization_original}
Cockburn, B., Shu, C.W.: {T}{V}{B} {R}unge-{K}utta {L}ocal {P}rojection
  {D}iscontinuous {G}alerkin {F}inite {E}lement {M}ethod for {C}onservation
  {L}aws {I}{I}: {G}eneral {F}ramework.
\newblock Mathematics of Computation \textbf{52}(186), 411--435 (1989)

\bibitem{DG}
Dolejší, V., Feistauer, M.: Discontinuous {G}alerkin {M}ethod – {A}nalysis
  and {A}pplications to {C}ompressible {F}low.
\newblock Springer, Heidelberg (2015)

\bibitem{Networks}
Garavello, M., Piccoli, B.: Traffic {F}low on {N}etworks, vol.~1.
\newblock American Institute of Mathematical Sciences (AIMS), Springfield, MO
  (2006)

\bibitem{Greenshields35}
Greenshields, B.D.: {A} {S}tudy of {T}raffic {C}apacity.
\newblock Highway Research Board \textbf{14}, 448--477 (1935)

\bibitem{Lecture_notes_Jungel}
Jüngel, A.: {M}odeling and {N}umerical {A}pproximation of {T}raffic {F}low
  {P}roblems.
\newblock Universität Mainz (2002).
\newblock Available online:
  \url{https://www.asc.tuwien.ac.at/~juengel/scripts/trafficflow.pdf},
  accessed: 2020-08-14

\bibitem{Mathematical_Framework}
Kachroo, P., Sastry, S.: {T}raffic {F}low {T}heory: {M}athematical {F}ramework.
\newblock University of California Berkeley (2012)

\bibitem{PrvniDG}
Reed, W.H., Hill, T.R.: {T}riangular {M}esh {M}ethods for the {N}eutron
  {T}ransport {E}quation.
\newblock Tech. rep., Los Alamos Scientific Lab., N. Mex.(USA) (1973)

\bibitem{Stabilization}
Shu, C.W.: {D}iscontinuous {G}alerkin {M}ethods: {G}eneral {A}pproach and
  {S}tability.
\newblock In: Numerical solutions of partial differential equations, pp.
  149--201. Birkh{\"a}user Basel (2009)

\bibitem{vanWageningen-Kessels}
van Wageningen-Kessels, F., van Lint, H., Vuik, K., Hoogendoorn, S.: Genealogy
  of {T}raffic {F}low {M}odels.
\newblock EURO Journal on Transportation and Logistics \textbf{4}(4), 445--473
  (2015)

\end{thebibliography}

\end{document}